\newtheorem{thm}{Theorem}[section]
\newtheorem{prop}[thm]{Proposition}
\newtheorem{lem}[thm]{Lemma}
\newtheorem{cor}[thm]{Corollary}
\theoremstyle{definition}
\newtheorem{rem}[thm]{Remark}
\newcommand{\R}{\mathbb{R}}
\newcommand{\C}{\mathbb{C}}
\newcommand{\N}{\mathbb{N}}
\newcommand{\prob}[3]{\mathbb{P}^{#1}_{#2}\left( #3 \right)}
\newcommand{\mean}[3]{\mathbb{E}^{#1}_{#2}\left[ #3 \right]}
\newcommand{\sle}{\mathrm{SLE}}
\newcommand{\skle}{\mathrm{SKLE}}
\newcommand{\bmd}{\mathrm{BMD}}
\newcommand{\disk}{\mathbb{D}}
\newcommand{\uhp}{\mathbb{H}}
\newcommand{\Slit}{\mathsf{Slit}}
\newcommand{\slit}{\mathbf{s}}
\DeclareMathOperator{\dist}{dist}
\DeclareMathOperator{\res}{Res}
\DeclareMathOperator{\hcap}{hcap}
\title{On the slit motion obeying chordal Komatu--Loewner equation
with finite explosion time}
\author{
Takuya Murayama\thanks{Department of Mathematics, Kyoto University, Japan.} \footnote{Email:\ murayama@math.kyoto-u.ac.jp}
}
\date{}
\begin{document}
\maketitle
\begin{abstract}

\begin{sloppypar}
This paper studies the behavior of solutions near the explosion time
to the chordal Komatu--Loewner equation for slits,
motivated by the preceding studies by Bauer and Friedrich~(2008) and
by Chen and Fukushima~(2018).
The solution to this equation represents moving slits in the upper half-plane.
We show that the distance between the slits and driving function
converges to zero at its explosion time.
We also prove a probabilistic version of this asymptotic behavior
for stochastic Komatu--Loewner evolutions
under some natural assumptions.
\end{sloppypar}

Keywords: Komatu--Loewner equation, stochastic Komatu--Loewner evolution,
SLE, explosion time, kernel convergence

MSC(2010): Primary 60J67, Secondary 30C20, 60J70, 60H10

\end{abstract}

\section{Introduction}
\label{sec:intro}

In the theory of conformal mappings on the complex plane,
it is often useful to consider the evolution
of a one-parameter family of conformal maps $\{g_t\}_{t\geq 0}$ or, equivalently,
regions $\{D_t\}_{t\geq 0}$ that are domains or ranges of these maps.
One of the main tools to describe such an evolution is
the \emph{Loewner differential equation},
from which some sharp estimates are obtained
on the Taylor coefficients of univalent functions,
such as Bieberbach's conjecture (de Branges' theorem).
See \cite{Po75} or \cite{Co95} for this direction.
These days, this equation is well known also in probability theory,
especially in the context of \emph{stochastic Loewner evolution (SLE)}
defined by Schramm~\cite{Sc00}.
This random process was introduced to find the scaling limits
of several two-dimensional discrete random processes on lattices,
and actually a lot of results have been established so far.

Basically, the Loewner equation concerns mappings on simply connected
planar domains, such as the unit disk $\disk$ (\emph{radial} case)
or upper half-plane $\uhp$ (\emph{chordal} case).
On the other hand, in the so-called \emph{bilateral} case,
Komatu~\cite{Ko43, Ko50} generalized this equation to a circular slit annulus,
an annulus with finitely many concentric circular slits removed.
On the basis of his argument, Bauer and Friedrich~\cite{BF04, BF06}
established a more detailed result in the radial case
and extended the radial SLE toward a circular slit disk.
The chordal case, on which we shall focus in this article,
was also generalized to a \emph{standard slit domain} of the form
$D=\uhp \setminus \bigcup_{j=1}^N C_j$, where $C_j$, $1 \leq j \leq N$,
are mutually disjoint horizontal slits (i.e., line segments parallel to the real axis)
by recent studies~\cite{BF08, CFR16, CF18}.
The resulting differential equation is called
the \emph{chordal Komatu--Loewner equation}~\cite{BF08, CFR16}.
In this case, the ranges of the conformal maps $\{g_t\}$ are specified
in terms of moving slits $\{C_j(t)\}$ whose dynamics is described
by the \emph{Komatu--Loewner equation for the slits}~\cite{BF08, CF18}.
See Figure~\ref{fig:hull}.

\begin{figure}
\centering
\includegraphics[width=0.9\columnwidth]{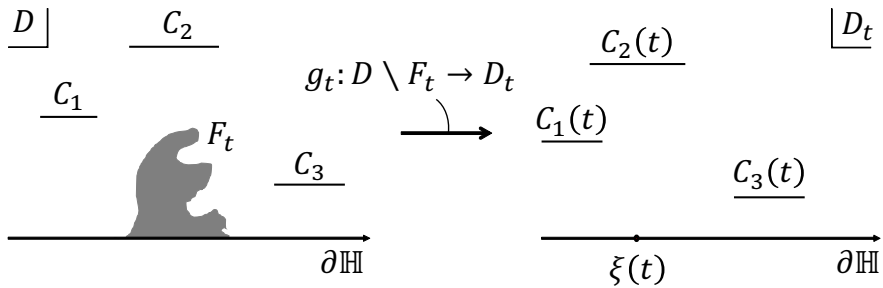}
\caption{Conformal maps and growing hulls}
\label{fig:hull}
\end{figure}

In the Loewner theory on simply connected domains,
this slit motion does not appear.
Thus, there are few results known on the behavior of the solution
to the Komatu--Loewner equation for the slits.
In particular, the explosion of this solution is a new obstacle of the theory.
Motivated by such a background,
we focus on the asymptotic behavior of the slit motion
around its explosion time $\zeta$ in this paper.
Assuming $\zeta<\infty$, we observe that the distance
between the slits and a moving point $\xi(t)$ on the real axis,
called the \emph{driving function} below, converges to zero as $t \to \zeta$.
Moreover, we prove a probabilistic version of this asymptotic behavior
for \emph{stochastic Komatu--Loewner evolutions},
which was introduced by Bauer and Friedrich~\cite{BF04, BF08} and
by Chen and Fukushima~\cite{CF18} to generalize SLE.

In order to provide a mathematical detail and
an appropriate intuition on the asymptotic behavior of the slits,
we now briefly recall the concrete form
of the chordal Komatu--Loewner equations.

Let us consider a typical case where $F_t$ in Figure~\ref{fig:hull}
is given by the trace $\gamma(0,t]$
of a simple curve $\gamma \colon [0, t_{\gamma}) \to \overline{D}$
satisfying $\gamma(0) \in \partial \uhp$ and $\gamma(0, t_{\gamma}) \subset D$.
Then for each $t \in [0, t_{\gamma})$, there exists a unique pair
of a standard slit domain $D_t$ and conformal map
$g_t \colon D \setminus \gamma(0,t] \to D_t$ with the hydrodynamic normalization
$g_t(z)=z+a_t/z+o(z^{-1})\; (z \to \infty)$.
The image $g_t(z)$ satisfies
the chordal Komatu--Loewner equation
\begin{equation} \label{eq:KL}
\frac{d}{dt}g_t(z) = -\pi \dot{a}_t \Psi_{D_t}(g_t(z),\xi(t)), \quad g_0(z)=z \in D,
\end{equation}
where $\dot{a}_t$ stands for the $t$-derivative of $a_t$.
The dynamics of the range $D_t$ is also described
by the Komatu--Loewner equation for the slits
\begin{equation} \label{eq:KLs}
\frac{d}{dt}z_j(t) = -\pi \dot{a}_t \Psi_{D_t}(z_j(t),\xi(t)),\quad
\frac{d}{dt}z^r_j(t) = -\pi \dot{a}_t \Psi_{D_t}(z^r_j(t),\xi(t)),
\end{equation}
where $z_j(t)$ (resp.\ $z^r_j(t)$) is the left (resp.\ right) endpoint
of the $j$-th slit $C_j(t)$ of $D_t$.
In the equations~\eqref{eq:KL} and \eqref{eq:KLs},
the driving function $\xi(t)$ is given
by $g_t(\gamma(t))=\lim_{z \to \gamma(t)}g_t(z) \in \partial \uhp$, and
the kernel $\Psi_{D_t}$ is the \emph{complex Poisson kernel}
of \emph{Brownian motion with darning} (BMD)
for the domain $D_t$~\cite[Lemma~4.1]{CFR16}.
If there are no slits (i.e., $D=\uhp$) and if $a_t=2t$ holds,
then the equation~\eqref{eq:KLs} does not appear,
and \eqref{eq:KL} reduces to the celebrated chordal Loewner equation
\begin{equation} \label{eq:Lo}
\frac{d}{dt}g_t(z)=\frac{2}{g_t(z)-\xi(t)}, \quad g_0(z)=z \in \uhp.
\end{equation}

In the previous paragraph, we start at a given trace $F_t=\gamma(0,t]$
and then obtain the driving function $\xi(t)$ and equations~\eqref{eq:KL}
and \eqref{eq:KLs}.
In turn, given a driving function $\xi \in C([0, \infty); \R)$,
we consider the initial value problem of \eqref{eq:KL} and \eqref{eq:KLs}.
In this case, let $[0 ,t_z)$ be the maximal time interval of existence
of a unique solution $g_t(z)$ to \eqref{eq:KL} for each $z \in D$.
Then it can be checked that the solutions $\{g_t(z); z \in D\}$ constitute
a conformal map $g_t \colon D \setminus F_t \to D_t$
hydrodynamically normalized,
where $F_t$ is given by $F_t:=\{z \in D; t_z \leq t\}$.
Though $F_t$ is not the trace of a simple curve in general,
it is at least a \emph{(compact $\uhp$-)hull} in $D$ as in Figure~\ref{fig:hull}.
Here, a hull means a non-empty, bounded and relatively closed subset of $\uhp$
whose complement in $\uhp$ is simply connected.
We call $\{g_t\}$ the (decreasing) Komatu--Loewner chain and
$\{F_t\}$ the Komatu--Loewner evolution driven by $\xi(t)$ in this article.
In particular, the stochastic Loewner evolution with parameter $\kappa>0$,
abbreviated as $\sle_{\kappa}$, is defined
by putting $\xi(t)=\sqrt{\kappa}B_t$ in the Loewner equation~\eqref{eq:Lo},
where $B_t$ is the one-dimensional standard Brownian motion.

In the no slit case $D=\uhp$, the Loewner evolution $\{F_t\}$ is defined
on the entire time interval $[0, \infty)$ if so is the driving function $\xi(t)$.
However, the Komatu--Loewner evolution $\{F_t\}$ is not necessarily defined
on $[0, \infty)$ even if $\xi(t)$ is defined there,
because the ranges $\{D_t\}$ in the right-hand side of \eqref{eq:KL}
is determined by the slit motion that solves \eqref{eq:KLs}.
Thus, $g_t$ and $F_t$ are defined only up to the explosion time $\zeta$
of the solution to \eqref{eq:KLs}.
This is a major difference between the Loewner and Komatu--Loewner equations,
and hence the explosion of the solution to \eqref{eq:KLs} is the main theme
of this paper as mentioned above.
In particular, our interests are the following two points:
\begin{itemize}
\item the asymptotic behavior of the slits $C_j(t)$ of $D_t$,
\item the relation between the asymptotic behaviors of $C_j(t)$ and of $F_t$.
\end{itemize}

To give a natural outlook on these two questions,
let us formally discuss some possibilities of finite time explosion.
The first possibility is a situation where $\{F_t\}$ touches or swallows
a certain slit $C_j$ at time $\zeta<\infty$.
Here, we say that $\{F_t\}$ \emph{swallows} a point $z \in \uhp$
if $z$ is not in the union $\bigcup_{t<\zeta}F_t$ but in a bounded component of
$\uhp \setminus \overline{\bigcup_{t<\zeta}F_t}$.
In this case, the unbounded component of
$D \setminus \overline{\bigcup_{t<\zeta}F_t}$ no longer has $N$ boundary slits.
Hence the equation~\eqref{eq:KLs} cannot have a solution
representing disjoint $N$ slits at $\zeta$.
The second one is the case where $F_t$ becomes unbounded in finite time.
This situation, however, does not seem to happen
if $\xi(t)$ is defined on the entire time interval $[0, \infty)$.
Since the `preimage' of $\xi(t)$ by $g_t$ is, loosely speaking, the `tip' of $F_t$,
the driving function $\xi(t)$ should diverge if $F_t$ becomes unbounded.
As a consequence, we are led to a guess
that only the former case occurs when $\zeta<\infty$
and that, if the slit $C_j$ is touched or swallowed by $F_t$,
then the corresponding slit $C_j(t)$ approaches $\xi(t)$.

We now state our main results that are based on our observations above.
Needless to say, it is difficult to verify all of these observations.
However, we can prove that
\begin{equation} \label{eq:char_intro}
\lim_{t \nearrow \zeta} \min_{1 \leq j \leq N} \dist(C_j(t), \xi(t))=0
\end{equation}
assuming that $\zeta<\infty$ (Theorem~\ref{thm:char_d}).
We note that \eqref{eq:char_intro} immediately implies that
$\lim_{t \nearrow \zeta} \Im z_j(t)=0$ for some $j$,
which justifies the comment in \cite[Theorem~4.1]{BF08}.
Moreover, we can establish the property~\eqref{eq:char_intro}
for the stochastic Komatu--Loewner evolution as well.
Let us recall that, motivated by \cite{BF08},
Chen and Fukushima~\cite{CF18} introduced $\skle_{\alpha, b}$
by the following stochastic differential equation (SDE) for the driving function:
\begin{equation} \label{eq:SKLE_dom}
d\xi(t)=\alpha(\xi(t), D_t)\,dB_t+b(\xi(t), D_t)\,dt.
\end{equation}
Under mild conditions on $\alpha$ and $b$,
the property~\eqref{eq:char_intro} still holds almost surely
for the solution to the system~\eqref{eq:KLs} and \eqref{eq:SKLE_dom}
(Theorem~\ref{thm:char_s}).
These two results, Theorems~\ref{thm:char_d} and \ref{thm:char_s},
are the main results of this paper.

In the proof of \eqref{eq:char_intro}, we need
to transform a Komatu--Loewner chain $\{g_t\}$ into a Loewner one $\{g^0_t\}$.
Such a transformation method was originally established
by Chen, Fukushima and Suzuki~\cite{CFS17}
and then generalized by the author~\cite{Mu18}.
See the paragraph after Theorem~\ref{thm:KLeq} in Section~\ref{sec:prel}
for the background on this transformation method.
In the paper~\cite{Mu18}, a version of Carath\'eodory's kernel theorem,
which is well known in complex analysis, was formulated and used extensively
to establish the general transformation method.
This kernel theorem will be used in the proof of \eqref{eq:char_intro} as well.

The rest of this paper is organized as follows:
Section~\ref{sec:prel} is devoted to a short review
on the previous results of \cite{CF18, Mu18}.
Section~\ref{sec:expl} is devoted
to the formulation and proof of the property~\eqref{eq:char_intro}.
We formulate \eqref{eq:char_intro} as Theorem~\ref{thm:char_d}
and its probabilistic version as Theorem~\ref{thm:char_s}
in Section~\ref{subsec:main}.
A key lemma, Lemma~\ref{lem:condB}, is also established in the same subsection.
Then we prove Theorem~\ref{thm:char_d} through Sections~\ref{subsec:outline},
\ref{subsec:claim_hcap} and \ref{subsec:claim_drive}.
The proof of Theorem~\ref{thm:char_s} is given in Section~\ref{subsec:char_s}
based on the proof of Theorem~\ref{thm:char_d}.

\section{Preliminaries}
\label{sec:prel}

Let $A_1$, ..., $A_N$ be disjoint compact continua in $\uhp$.
Here, by a continuum we mean a connected closed sets in $\C$
having more than one point.
We work on a domain of the form $D:=\uhp \setminus \bigcup_{j=1}^N A_j$
thoughout this paper.
A basic fact is that, for any hull (or empty set) $F \subset D$,
the \emph{canonical map} $f_F \colon D \setminus F \to \tilde{D}$ exists
by \cite[Proposition~2.3]{Mu18}.
This means that $f_F$ is a conformal map onto a standard slit domain $\tilde{D}$
with the hydrodynamic normalization $\lim_{z \to \infty}(f_F(z)-z)=0$,
and that the pair $(f_F, \tilde{D})$ is unique.
After taking Schwarz's reflection, the canonical map $f_F$
has the Laurent expansion
\[
f_F(z)=z+\frac{\hcap^D(F)}{z}+o(z^{-1})\quad \text{as $z \to \infty$.}
\]
The positive constant $\hcap^D(F)$ is called
the \emph{half-plane capacity of $F$ relative to $D$}.

Another basic fact that is used later
is a variant of Carath\'eodory's \emph{kernel theorem}.
For a sequence of subdomains $D_n$ of $\uhp$,
we define the \emph{kernel} of $\{D_n\}$ \cite[Definition~3.7]{Mu18}
as the largest unbounded domain such that
its every compact subset is included by $D_n$ for all sufficiently large $n$.
Under the assumption that
\begin{enumerate}
\item[(K.1)] all $D_n$ contain $\uhp \cap \Delta(0, L)$ for some fixed $L>0$,
\end{enumerate}
the kernel exists uniquely.
Here $\Delta(a,r):=\{z \in \C; \lvert z-a \rvert >r\}$ for $a \in \C$ and $r>0$.
We say that $\{D_n\}$ converges to its kernel
in the sense of \emph{kernel convergence}
if all subsequences of $\{D_n\}$ have the same kernel.
We consider, on such domains $D_n$, a sequence of univalent functions
$f_n \colon D_n \to \uhp$ such that
\begin{enumerate}
\item[(K.2)] $\lim_{z \to \infty} (f_n(z)-z)=0$;
\item[(K.3)] $\lim_{z \to \xi_0} \Im f_n(z)=0$
for all $\xi_0 \in \partial \uhp \cap \Delta(0, L)$.
\end{enumerate}

\begin{lem}[{\cite[Lemma~3.9]{Mu18}}] \label{lem:K1forRan}
Under Assumptions {\rm (K.1)--(K.3)}, the sequence of the ranges
$\tilde{D}_n:=f_n(D_n)$ also satisfies Condition {\rm (K.1)}
with the constant $L$ in {\rm (K.1)} replaced by $2L$.
\end{lem}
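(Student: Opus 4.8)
The plan is to reflect $f_n$ across the free part of its boundary and then appeal to the classical area theorem for the class $\Sigma$ of univalent functions on the exterior of the unit disk. First I would use (K.1) and (K.3) to perform the reflection: by (K.1), $f_n$ is holomorphic on $\uhp\cap\Delta(0,L)$ with values in $\uhp$, and by (K.3), $\Im f_n(z)\to 0$ as $z$ tends to any point of $\R\cap\Delta(0,L)$. Since $\Im f_n$ is then a nonnegative harmonic function whose boundary limits vanish along $\R\cap\Delta(0,L)$, Schwarz's reflection principle extends $f_n$ to a holomorphic map $\hat f_n$ on the full exterior disk $\Delta(0,L)=\{\,\lvert z\rvert>L\,\}$ with $\hat f_n(\overline z)=\overline{\hat f_n(z)}$, and by (K.2) one has $\hat f_n(z)-z\to 0$ as $z\to\infty$, so that $\hat f_n$ fixes $\infty$ with an expansion $\hat f_n(z)=z+\alpha_n z^{-1}+O(z^{-2})$ there. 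A point requiring care is that $\hat f_n$ must be shown to be \emph{globally} univalent on $\Delta(0,L)$, not merely locally injective; this comes from a short case check based on $\Im f_n>0$ throughout $\uhp\cap\Delta(0,L)$ (otherwise $f_n$ is constant), on the fact that $\hat f_n$ sends the upper, real, and lower parts of $\Delta(0,L)$ into $\uhp$, $\R$, and the lower half-plane respectively, and on the strict monotonicity of $\hat f_n$ along each of the rays $(-\infty,-L)$ and $(L,\infty)$.

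Next I would analyse the omitted set $\hat K_n:=\C\setminus\hat f_n(\Delta(0,L))$. Since $\hat f_n$ is univalent on the spherical disk $\Delta(0,L)\cup\{\infty\}$, the set $\hat K_n$ is a compact connected set with more than one point (it is nonempty and not a singleton, for otherwise $\hat f_n$ would be a conformal bijection of a disk onto the whole plane), it is invariant under conjugation, and $\hat f_n$ is its hydrodynamically normalized exterior conformal map. After the rescaling $\zeta\mapsto\hat f_n(L\zeta)/L$, which turns $\hat f_n$ into a function in $\Sigma$ with vanishing constant term, I would invoke the classical fact that for any $g(\zeta)=\zeta+b_1\zeta^{-1}+\cdots\in\Sigma$ the omitted set $\C\setminus g(\{\,\lvert\zeta\rvert>1\,\})$ lies in the closed disk $\{\,\lvert w\rvert\le 2\,\}$. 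The quickest proof of this: for any omitted value $w_0$, the square-root transform $g_2(\zeta):=\bigl(g(\zeta^2)-w_0\bigr)^{1/2}$ is a single-valued (the relevant winding number being even) odd univalent function in $\Sigma$ whose $\zeta^{-1}$-coefficient equals $-w_0/2$, so the area theorem gives $\lvert w_0/2\rvert\le 1$. Undoing the rescaling yields $\hat K_n\subseteq\{\,\lvert w\rvert\le 2L\,\}$.

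It then only remains to read off the conclusion. Let $w\in\uhp$ with $\lvert w\rvert>2L$. Then $w\notin\hat K_n$, so $w=\hat f_n(z)$ for some $z\in\Delta(0,L)$; since $\hat f_n$ maps the real part of $\Delta(0,L)$ into $\R$ and its lower part into the lower half-plane, the inequality $\Im w>0$ forces $\Im z>0$, hence $z\in\uhp\cap\Delta(0,L)\subseteq D_n$ by (K.1), and therefore $w=f_n(z)\in f_n(D_n)=\tilde{D}_n$. Thus $\uhp\cap\Delta(0,2L)\subseteq\tilde{D}_n$, which is precisely Condition (K.1) for the ranges $\tilde{D}_n$ with $L$ replaced by $2L$.

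The step I expect to be the real obstacle is the containment $\hat K_n\subseteq\{\,\lvert w\rvert\le 2L\,\}$: the square-root transform keeps it short, but one must verify with some care that the square root is single-valued on the non-simply-connected exterior domain and that the transformed function genuinely belongs to $\Sigma$ (univalence and oddness). A secondary technical point is that the reflection in the first step has to be justified without assuming a priori that $f_n$ is continuous up to $\R\cap\Delta(0,L)$, which is why it is best phrased through the reflection principle for the harmonic function $\Im f_n$ together with its vanishing boundary limits.
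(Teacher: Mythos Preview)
Your argument is correct and matches the approach behind the cited result. In this paper the lemma is only quoted from \cite[Lemma~3.9]{Mu18} without proof, but the same mechanism you use---Schwarz reflection across $\partial\uhp\cap\Delta(0,L)$, rescaling $z\mapsto L^{-1}\hat f_n(Lz)$ into the class $\Sigma$, and the bound $\C\setminus g(\disk^*)\subset\overline{B(0,2)}$ for $g\in\Sigma$---is exactly what the paper invokes later in Section~\ref{subsec:claim_hcap} (see \eqref{eq:locbdd}, where the omitted-set bound is cited as \cite[Lemma~3.5]{Mu18}). Your derivation of that bound via the odd square-root transform and the area theorem is the standard one; the only place to be a little more explicit is the global univalence of $\hat f_n$ on the two real rays, where one should note that $\hat f_n'(x)=\lim_{y\downarrow 0}\Im f_n(x+iy)/y>0$ forces strict monotonicity and that the images of $(-\infty,-L)$ and $(L,\infty)$ are separated by the asymptotics $\hat f_n(x)=x+O(x^{-1})$.
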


\begin{thm}[{\cite[Theorem~3.8]{Mu18}}] \label{thm:kernel}
Suppose that, under Assumptions {\rm (K.1)--(K.3)},
the sequence $\{D_n\}$ converges to a domain
$D=\uhp \setminus \bigcup_{j=0}^N A_j$ in the sense of kernel convergence,
where $A_0$ is a hull or an empty set,
each $A_j$ for $1 \leq j \leq N$ is a connected compact subset
whose complement in $\uhp$ is simply connected,
and all $A_j$'s are disjoint.
Then the following are equivalent:
\begin{enumerate}
\item \label{cond:fnconv}
$\{f_n\}$ converges to a univalent function
$f \colon D \to \uhp$ locally uniformly on $D$;
\item \label{cond:ranconv}
$\{\tilde{D}_n\}$ converges to a domain $\tilde{D}$
in the sense of kernel convergence.
\end{enumerate}
If one of these holds, then $\tilde{D}=f(D)$
and $f_n^{-1} \to f^{-1}$ locally uniformly on $D$.
\end{thm}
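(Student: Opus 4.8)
To prove Theorem~\ref{thm:kernel} the plan is to follow the classical proof of Carath\'eodory's kernel theorem, adapted to the half-plane with the hydrodynamic normalisation and to the multiply connected slit setting; the three workhorses are normal families, the Bieberbach area theorem, and Lemma~\ref{lem:K1forRan}. As a preliminary reduction I would use Assumption (K.3) and Schwarz reflection to extend each $f_n$ to a univalent map $\hat f_n$ on the symmetric domain $\hat D_n:=D_n\cup(\partial\uhp\cap\Delta(0,L))\cup\overline{D_n}$. By (K.1) all the removed continua lie in $\uhp\cap\overline{B(0,L)}$, so $\hat D_n$ contains the neighbourhood $\Delta(0,L)$ of $\infty$, and there $\hat f_n(z)=z+c_n/z+O(z^{-2})$ by (K.2); rescaling and applying the area theorem on $\{\lvert z\rvert>1\}$ gives $\lvert c_n\rvert\le L^2$, so $\hat f_n-\mathrm{id}$ is uniformly bounded on $\{\lvert z\rvert\ge2L\}$ and a standard normal-family argument shows $\{\hat f_n\}$ is locally uniformly bounded, hence normal, on $\hat D:=D\cup(\partial\uhp\cap\Delta(0,L))\cup\overline D$. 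By Lemma~\ref{lem:K1forRan} the ranges $\tilde D_n$ satisfy (K.1) with $2L$ in place of $L$, so the same reasoning makes $\{\hat f_n^{-1}\}$ a normal family on any region eventually contained in the $\tilde D_n$.

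For the implication (\ref{cond:fnconv})$\Rightarrow$(\ref{cond:ranconv}), assume $f_n\to f$ locally uniformly on $D$ with $f\colon D\to\uhp$ univalent. Given a compact $K\subset f(D)$, choose an open $U$ with $f^{-1}(K)\subset U\subset\overline U\subset D$; then $\overline U\subset D_n$ and $f_n\to f$ uniformly on $\overline U$ for large $n$, so Rouch\'e's theorem together with a compactness argument over $w\in K$ yields $K\subset f_n(U)\subset\tilde D_n$ eventually. Hence $f(D)$ is contained in the kernel of every subsequence. For the reverse inclusion, suppose some subsequence $\{\tilde D_{n_k}\}$ had a kernel $\tilde D^\ast$ containing a point $w_0\notin f(D)$; join $w_0$ to a point of $f(D)$ by a compact arc $\Gamma\subset\tilde D^\ast$, so $\Gamma\subset\tilde D_{n_k}$ for large $k$, pull $\Gamma$ back by $f_{n_k}^{-1}$, and pass to a further subsequence along which $f_{n_k}^{-1}\to\phi$ locally uniformly near $\Gamma$ (by normality of $\{\hat f_{n_k}^{-1}\}$). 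On the part of $\Gamma$ in $f(D)$ one identifies $\phi$ with $f^{-1}$, and tracking $\Gamma$ shows $\phi(\Gamma)$ is a compact arc inside $D$ that cannot approach $\partial D$ — neither the hull $A_0$ nor the slits $A_1,\dots,A_N$ — because $D_{n_k}\to D$. Then $\phi(w_0)\in D$ and $f(\phi(w_0))=\lim_k f_{n_k}\bigl(f_{n_k}^{-1}(w_0)\bigr)=w_0$, a contradiction; so every subsequential kernel equals $f(D)$, i.e.\ $\tilde D_n\to\tilde D:=f(D)$ in the sense of kernel convergence.

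For (\ref{cond:ranconv})$\Rightarrow$(\ref{cond:fnconv}), assume $\tilde D_n\to\tilde D$ and let $f$ be any locally uniform limit of a subsequence of the normal family $\{f_n\}$. Since $D$ contains a neighbourhood of $\infty$ and $f_n(z)-z\to0$ there (with the $1/z$-coefficient bounded by $L^2$), $f$ is non-constant, hence univalent by Hurwitz's theorem, with values in $\uhp$, and (K.2)--(K.3) pass to the limit; applying (\ref{cond:fnconv})$\Rightarrow$(\ref{cond:ranconv}) along this subsequence gives $\tilde D_{n_k}\to f(D)$, so $f(D)=\tilde D$. If $f$ and $f'$ are two such subsequential limits, then $f\circ(f')^{-1}$ is a conformal automorphism of $\tilde D$ which, by Schwarz reflection, extends to an automorphism of a neighbourhood of $\infty$ fixing $\infty$ with unit derivative and real on $\R$, hence is the identity; thus $f=f'$, the whole sequence $\{f_n\}$ converges to $f$, and $\tilde D=f(D)$. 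Finally, with $\tilde D_n\to f(D)$ known, $\{f_n^{-1}\}$ is normal on $f(D)$ and any subsequential limit $\phi$ is univalent with $f(\phi(w))=\lim_n f_n\bigl(f_n^{-1}(w)\bigr)=w$ on each compact subset of $f(D)$, so $\phi=f^{-1}$ and $f_n^{-1}\to f^{-1}$ locally uniformly.

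The genuine difficulty is the reverse inclusion in (\ref{cond:fnconv})$\Rightarrow$(\ref{cond:ranconv}): ruling out a kernel strictly larger than $f(D)$ by pulling arcs back through the inverse maps and showing they stay away from $\partial D$. Because $\partial D$ now contains the continua $A_1,\dots,A_N$ in addition to the hull $A_0$, the boundary-correspondence bookkeeping is more delicate than in the simply connected chordal case, and it is precisely here that Lemma~\ref{lem:K1forRan} and the hypothesis $D_n\to D$ must be used carefully; the remaining steps (normality, Hurwitz, uniqueness of the normalised map) are then routine.
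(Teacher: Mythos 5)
The theorem you are proving is imported into the paper as \cite[Theorem~3.8]{Mu18}; the paper itself gives no proof of it, so there is no in-paper argument to compare against. Judging your proposal on its own merits: the overall strategy — Schwarz reflection via (K.3), normality of $\{f_n\}$ and $\{f_n^{-1}\}$ via the area theorem and Lemma~\ref{lem:K1forRan}, Rouch\'e for the forward inclusion in $\eqref{cond:fnconv}\Rightarrow\eqref{cond:ranconv}$, a pullback-of-arcs argument for the reverse inclusion, and a uniqueness-of-normalized-limit argument for $\eqref{cond:ranconv}\Rightarrow\eqref{cond:fnconv}$ — is the natural adaptation of the classical Carath\'eodory kernel theorem and is essentially sound in outline.

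There are, however, two genuine gaps. First, in the reverse inclusion of $\eqref{cond:fnconv}\Rightarrow\eqref{cond:ranconv}$ you assert that $\phi(\Gamma)$ ``cannot approach $\partial D$ because $D_{n_k}\to D$'' but you do not supply the argument; you correctly flag this as the hard step, but as written it is an appeal to the conclusion rather than a proof. The missing step is roughly this: if $\phi(w^\ast)\in\partial D$, then since $\phi$ is a non-constant locally uniform limit of the $f_{n_k}^{-1}$ and hence univalent (Hurwitz), a Rouch\'e-type argument shows a fixed small ball $B'$ around $\phi(w^\ast)$ lies in $f_{n_k}^{-1}(V)\subset D_{n_k}$ for all large $k$, for some neighbourhood $V$ of $w^\ast$. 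Since $B'$ is connected to $D$ through the part of $\phi(\Gamma)$ already known to lie in $D$, this means the kernel of the subsequence $\{D_{n_k}\}$ would contain $B'$; but that kernel is $D$ by hypothesis (kernel convergence of the full sequence), contradicting $\phi(w^\ast)\in\partial D$. Without this step the argument that $\phi(w_0)\in D$ does not close.

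Second, the uniqueness step in $\eqref{cond:ranconv}\Rightarrow\eqref{cond:fnconv}$ is wrong as stated. A univalent map on a neighbourhood of $\infty$ that fixes $\infty$, has derivative $1$ there, and is real on $\R$ has the form $\psi(z)=z+c+c_1/z+\cdots$ with real $c, c_j$; this is a genuine family and is \emph{not} forced to be the identity, so ``hence is the identity'' is a non sequitur. To repair it you must first use the hydrodynamic normalisation $\lim_{z\to\infty}(f_n(z)-z)=0$, which survives the limit and kills the constant $c$, giving $\psi(z)=z+O(1/z)$. Even then, $\psi(z)=z+O(1/z)$ near $\infty$ alone does not force $\psi=\mathrm{id}$ on an annular neighbourhood. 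What you need is that $\psi$ is an automorphism of the \emph{whole} finitely connected domain $\tilde D$ with this normalisation: composing with the canonical map $\Phi\colon\tilde D\to D^\ast$ onto a standard slit domain (which exists and is unique by \cite[Proposition~2.3]{Mu18}) shows $\Phi\circ\psi$ is another canonical map for $\tilde D$, so $\Phi\circ\psi=\Phi$ and $\psi=\mathrm{id}$. Alternatively, one can use that the conformal automorphism group of a domain of connectivity at least two is compact (finite for connectivity $\geq 3$), so $\psi$ has finite order; comparing Laurent coefficients of $\psi^m=\mathrm{id}$ near $\infty$ then forces all coefficients $c_j$ to vanish.
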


Note that the locally uniform convergence of $\{f_n\}$ makes sense
since every compact subset of $D$ is eventually included by $D_n$.
(In \cite{Mu18} the abbreviation `u.c.'\ is used to indicate
``uniform convergence on compacta'' following \cite{Co95},
but in this paper we avoid using it for the sake of readability.)

Keeping these two basic facts in mind,
we proceed to the correspondence between driving functions
and families of continuously growing hulls via the Komatu--Loewner equations,
which was established in \cite{CF18, Mu18}.
We regard \eqref{eq:KLs} as an ordinary differential equation (ODE)
on the open subset
\begin{align*}
\Slit=&\{\slit=(\slit_l)_{l=1}^{3N}
=(y_1, \ldots, y_N, x_1, \ldots, x_N, x^r_1, \ldots, x^r_N) \in \R^{3N} \\
&; y_j>0, x_j<x^r_j, \ \text{either $x^r_j<x_k$ or $x^r_k<x_j$
whenever $y_j=y_k$, $j \neq k$}\}
\end{align*}
of $\R^{3N}$ as follows:
For a vector $\slit=(y_1, \ldots, y_N, x_1, \ldots, x_N, x^r_1, \ldots, x^r_N) \in \Slit$,
the segment is denoted by $C_j(\slit)$ whose endpoints are $z_j:=x_j+iy_j$
and $z^r_j:=x^r_j+iy_j$.
We also put $D(\slit):=\uhp \setminus \bigcup_{j=1}^N C_j(\slit)$.
The functions
\[
b_l(\xi_0, \slit)=\begin{cases}
-2\pi \Im\Psi_{D(\slit)}(z_l, \xi_0) &(1 \leq l \leq N) \\
-2\pi \Re\Psi_{D(\slit)}(z_{l-N}, \xi_0) &(N+1 \leq l \leq 2N) \\
-2\pi \Re\Psi_{D(\slit)}(z^r_{l-2N}, \xi_0) &(2N+1 \leq l \leq 3N)
\end{cases}
\]
are locally Lipschitz continuous on $\R \times \Slit$ by \cite[Lemma~4.1]{CF18}
(see also \cite[Section~2.2]{Mu18}).
By utilizing these notations, we can write \eqref{eq:KLs} in the form
\begin{equation} \label{eq:gKLs}
\frac{d}{dt}\slit_l(t)=\frac{\dot{a}_t}{2}b_l(\xi(t), \slit(t)),\quad 1 \leq l \leq 3N.
\end{equation}

Let $\xi(t)$ be a continuous function on a fixed interval $[0, t_0)$
and $a_t$ be a strictly increasing and differentiable function on this interval
with $a_0=0$.
Since the right-hand side of \eqref{eq:gKLs} satisfies the local Lipschitz condition,
there exists a unique solution $\slit(t)$ with arbitrary initial value in $\Slit$
up to its explosion time $\zeta$.
The time $\zeta$ may be strictly less than $t_0$.
For this solution $\slit(t)$, the equation \eqref{eq:KL} is written as
\begin{equation} \label{eq:gKL}
\frac{d}{dt}g_{t}(z) = -\pi \dot{a}_t \Psi_{\slit(t)}(g_{t}(z),\xi(t)),
\quad g_{0}(z)=z \in D(\slit(0)),
\end{equation}
where we put $\Psi_{\slit}:=\Psi_{D(\slit)}$ for $\slit \in \Slit$.
For each point $z$ in $D:=D(\slit(0))$, this equation has a unique solution $g_t(z)$
up to the time $t_z:=\zeta \wedge \sup\{t>0; \lvert g_t(z)-\xi(t) \rvert >0\}$
by \cite[Theorem~5.5 (i)]{CF18}.
The sets $F_t:=\{z \in D ; t_z \leq t\}$, $t<\zeta$, constitute
a family of growing (i.e., strictly increasing) hulls in $D$, and
the function $g_t \colon D \setminus F_t \to D(\slit(t))$
is the canonical map for $F_t$.
See \cite[Section~5]{CF18} for further detail.

While we have seen in Section~\ref{sec:intro} that
the Komatu--Loewner equations were obtained
for the canonical map induced from a simple curve,
we now notice that these equations should be established
even if we start at a nice family of growing hulls.
To explain this fact precisely,
let $\{F_t\}_{t \in [0, t_0)}$ be a family of growing hulls
in a standard slit domain $D$ (of $N$ slits)
and $g_t \colon D \setminus F_t \to D_t$ be the canonical map.
We say that
\begin{itemize}
\item $\{F_t\}$ is \emph{continuous}
if $\{D \setminus F_t\}$ is continuous in the sense of kernel convergence
\cite[Definition~4.2]{Mu18};
\item $\xi \colon [0, t_0) \to \R$ is the \emph{driving function} of $\{F_t\}$
if, for each $t \in [0, t_0)$,
\begin{equation} \label{eq:shrink}
\bigcap_{\delta>0}\overline{g_t(F_{t+\delta} \setminus F_t)}=\{\xi(t)\}.
\end{equation}
\end{itemize}
Suppose that $\{F_t\}$ is continuous.
Then the range $\{D_t\}$ is continuous in the sense of kernel convergence
by \cite[Lemma~4.4]{Mu18}.
The half-plane capacity $\hcap^D(F_t)$ is also continuous and strictly increasing.
Hence we can take a continuous $\Slit$-valued function $\slit(t)$
satisfying $D_t=D(\slit(t))$
and reparametrize $\{F_t\}$ so that $\hcap^D(F_t)$ is differentiable in $t$.
The next theorem is, therefore, general enough to discuss
what kind of hulls induce the canonical maps
satisfying the chordal Komatu--Loewner equations.

\begin{thm}[{\cite[Theorem~4.6]{Mu18}}] \label{thm:KLeq}
Let $a_t \in C^1([0, t_0); \R)$ be strictly increasing with $a_0=0$ and
$\xi(t) \in C([0, t_0); \R)$.
The following are equivalent:
\begin{enumerate}
\item \label{cond:contihull}
$\{F_t\}_{t \in [0, t_0)}$ is a family of continuously growing hulls in $D$,
its driving function is $\xi(t)$, and $\hcap^D(F_t)=a_t$.
\item \label{cond:KLeqs}
The slits $\slit(t)$ and map $g_t(z)$ solve \eqref{eq:gKLs} and \eqref{eq:gKL}
with $\zeta \geq t_0$.
\end{enumerate}
\end{thm}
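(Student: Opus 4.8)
The plan is to establish the two implications separately; in both directions the strategy is to pass between locally uniform convergence of the conformal maps and kernel convergence of their domains, using Theorem~\ref{thm:kernel} and Lemma~\ref{lem:K1forRan}.

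For \eqref{cond:contihull} $\Rightarrow$ \eqref{cond:KLeqs} I would start from a continuously growing family of hulls $\{F_t\}$ with driving function $\xi(t)$ and $\hcap^D(F_t)=a_t$; by the remarks preceding the theorem one may take $\slit(t)$ continuous with $D(\slit(t))=D_t$. Fixing $t$, I would write $g_{t+\delta}=\psi_{t,\delta}\circ g_t$ for small $\delta>0$, so that $\psi_{t,\delta}$ is the canonical map of the hull $K_{t,\delta}:=g_t(F_{t+\delta}\setminus F_t)$ in $D_t$; additivity of the half-plane capacity under composition gives $\hcap^{D_t}(K_{t,\delta})=a_{t+\delta}-a_t$, and by \eqref{eq:shrink} the sets $K_{t,\delta}$ collapse onto $\{\xi(t)\}$ as $\delta\downarrow 0$. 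The heart of the argument is a first-order expansion of the canonical map of a small hull: if $K$ is a hull in a standard slit domain $D'$ contained in a small half-disk about $\xi_0\in\partial\uhp$, then
\[
\phi_K(w)=w-\pi\,\hcap^{D'}(K)\,\Psi_{D'}(w,\xi_0)+\varepsilon_K(w),
\qquad \varepsilon_K(w)=o\bigl(\hcap^{D'}(K)\bigr),
\]
locally uniformly in $w$, where the little-$o$ is meant as both $\hcap^{D'}(K)$ and the radius of $K$ about $\xi_0$ tend to $0$. I would prove this from a quantitative distortion bound for $\phi_K-\mathrm{id}$ in terms of these two quantities, uniform over the slit domains in play — which is where a (K.1)-type lower bound and Lemma~\ref{lem:K1forRan} enter — together with the representation of the boundary values of $\phi_K$ via $\Psi_{D'}$. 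Applying it with $D'=D_t$, $K=K_{t,\delta}$, $\xi_0=\xi(t)$ and $w=g_t(z)$, dividing by $\delta$ and letting $\delta\downarrow 0$ (using $(a_{t+\delta}-a_t)/\delta\to\dot a_t$, the continuity of $\xi$, and the continuity of $D_s\mapsto\Psi_{D_s}$ in the kernel sense) would give the right $t$-derivative of $g_t(z)$ as $-\pi\dot a_t\Psi_{D_t}(g_t(z),\xi(t))$. The same computation with $w$ replaced by the slit endpoints $z_j(t),z^r_j(t)$ of $D_t$ — to which $\psi_{t,\delta}$ extends by reflection across the slits, sending them to the corresponding endpoints of $D_{t+\delta}$ — would produce the right derivative appearing in \eqref{eq:gKLs}. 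Since the right-hand sides are continuous in $t$, a standard real-variable argument upgrades these one-sided derivatives to genuine ones, so \eqref{eq:gKL} and \eqref{eq:gKLs} hold, and $\zeta\ge t_0$ because $\slit(t)$ is defined on all of $[0,t_0)$.

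For \eqref{cond:KLeqs} $\Rightarrow$ \eqref{cond:contihull} I would rely on the facts recalled before the theorem, which already give that $F_t:=\{z\in D;\,t_z\le t\}$ is a growing family of hulls with $g_t\colon D\setminus F_t\to D(\slit(t))$ the canonical map, and that $F_0=\emptyset$. It then remains to identify $\hcap^D(F_t)$ with $a_t$, to verify \eqref{eq:shrink}, and to prove continuity of $\{F_t\}$. For the capacity I would insert $g_t(z)=z+\hcap^D(F_t)z^{-1}+o(z^{-1})$ into \eqref{eq:gKL} and use $\Psi_{D(\slit)}(w,\xi_0)=-(\pi w)^{-1}+O(w^{-2})$ as $w\to\infty$ — the normalization that makes \eqref{eq:gKL} reduce to \eqref{eq:Lo} in the no-slit case with $a_t=2t$ — and then compare the coefficients of $z^{-1}$ to get $\frac{d}{dt}\hcap^D(F_t)=\dot a_t$, hence $\hcap^D(F_t)=a_t$. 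For \eqref{eq:shrink} I would note that $g_{t+\delta}\circ g_t^{-1}$ solves the Komatu--Loewner equation on $[t,t+\delta)$ started from $D_t$ with driving function $\xi(\cdot)$, so the continuity of $\xi$ at $t$ forces $\overline{g_t(F_{t+\delta}\setminus F_t)}$ to shrink to $\{\xi(t)\}$ as $\delta\downarrow 0$. Finally, from \eqref{eq:gKL} and the local Lipschitz continuity of $\Psi$ one obtains that $t\mapsto g_t(z)$ is continuous, locally uniformly in $z$, and Theorem~\ref{thm:kernel} converts this into kernel convergence of $D\setminus F_t$ as $t$ varies, i.e.\ the continuity of $\{F_t\}$.

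The step I expect to be the main obstacle is the first-order expansion of $\phi_K$ in the forward implication: one must control $\varepsilon_K$ by $o(\hcap^{D'}(K))$ \emph{uniformly} over the relevant slit domains, with the radius of $K$ about $\xi_0$ serving as a second small parameter, and one must identify the first-order term precisely with $\pi\hcap^{D'}(K)\Psi_{D'}(\cdot,\xi_0)$ through the boundary behaviour of $\phi_K$. Combining a distortion estimate uniform over standard slit domains subject to a (K.1)-type bound, the input from \eqref{eq:shrink} that the radius of $g_t(F_{t+\delta}\setminus F_t)$ about $\xi(t)$ vanishes as $\delta\downarrow 0$, and the continuity of $D_t$ and of $\Psi_{D_t}$ in $t$, so as to obtain a one-sided $t$-derivative that is continuous — and hence a genuine solution of the two ODEs — is the technical core of the proof.
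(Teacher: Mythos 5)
The paper does not prove Theorem~\ref{thm:KLeq}; it is quoted from \cite[Theorem~4.6]{Mu18} as background in the preliminaries, so there is no in-paper proof to compare your proposal against. Your outline nevertheless follows the route one would expect and that the cited literature takes: in the forward direction, decompose $g_{t+\delta}=\psi_{t,\delta}\circ g_t$, use additivity of the half-plane capacity, establish a uniform first-order expansion of the canonical map $\phi_K$ of a small hull $K$ with leading term the BMD complex Poisson kernel $-\pi\,\hcap^{D_t}(K)\,\Psi_{D_t}(\cdot,\xi(t))$, and upgrade the resulting one-sided derivative to a genuine one by continuity of the right-hand side in $t$; in the reverse direction, rely on the structural facts recalled just before the theorem, match Laurent coefficients at infinity to obtain $\hcap^D(F_t)=a_t$, verify \eqref{eq:shrink}, and deduce continuity of $\{F_t\}$ via kernel convergence as in Theorem~\ref{thm:kernel}. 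You also correctly pinpoint the technical core --- an expansion of $\phi_K$ with error $o\bigl(\hcap^{D'}(K)\bigr)$ that is uniform over the relevant slit domains, with a (K.1)-type lower bound and Lemma~\ref{lem:K1forRan} supplying the uniformity --- which is indeed where the real work lies. Without access to \cite{Mu18} I cannot certify step-by-step agreement with that paper, but I see no gap in the sketch, and the approach is consistent with the framework the present paper recalls in Section~\ref{sec:prel}.
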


The condition \eqref{cond:contihull} in Theorem~\ref{thm:KLeq}
is stable under conformal transformation.
More precisely, let $V$ be a subdomain of $D$
with $\bigcup_{t \in [0, t_0)} F_t \subset V$,
$\tilde{D}$ be another slit domain of a possibly different number of slits,
and $h$ be a univalent function from $V$ into $\tilde{D}$.
By \cite[Theorem~4.8]{Mu18}, the family of the images $\{h(F_t)\}_{t \in [0, t_0)}$
by $h$ is again a family of continuously growing hulls in $\tilde{D}$.
Let $\tilde{g}_t$ be the canonical map for $h(F_t)$
and $h_t:=\tilde{g}_t \circ h \circ g_t^{-1}$.
The driving function of $\{h(F_t)\}$ is $h_t(\xi(t))$,
and $(d/dt)\hcap^{\tilde{D}}(h(F_t))=h_t'(\xi(t))^2\dot{a}_t$ holds.
The case where $\tilde{D}=\uhp$ was examined in \cite{CFS17}
to reduce the analysis of SKLE to that of SLE.
After that, the general case was proven in \cite{Mu18}
to give a full comprehension of the locality of $\skle_{\sqrt{6}, -b_{\bmd}}$
\cite[Theorem~4.9]{Mu18}
and to investigate the existing SLE-type processes on multiply connected domains
via the Komatu--Loewner equations.
The study~\cite{Mu19+b} on the relation between $\skle_{\alpha, \beta}$
and the \emph{Laplacian-$b$ motion}~\cite{La06}
illustrates the latter motivation well.

Finally, we note that all the results summarized in this section
are also the case for the chordal Loewner equation on $\uhp$
by defining $\Psi_{\uhp}(z, \xi_0)=\pi^{-1}(z-\xi_0)^{-1}$
except that we do not need to consider the equation for the slits.

\begin{rem}
$\Im \Psi_D(z, \xi_0)=K^*_D(z, \xi_0)$ is always positive
because $K^*_D$ is the Poisson kernel of BMD.
Hence both \eqref{eq:gKLs} and \eqref{eq:gKL} yield downward flows,
and the hull $F_t$ consists of the points $z$
whose images $g_t(z)$ eventually reach the point $\xi(t)$ on $\partial \uhp$.
The flow of $g_t(z)$ and the continuity of $\xi(t)$
thus strongly affect the shape of $F_t$.
As for the chordal Loewner equation in $\uhp$,
visual and detailed expositions on this relation can be found
in some literature, for example, in \cite[Chapter~2]{Ka15}.
Our observations in Section~\ref{sec:intro} also
comes from such a visual comprehension.
\end{rem}

\section{Main results and proof}
\label{sec:expl}

\subsection{Asymptotic behavior of the slit motion}
\label{subsec:main}

Thoughout this section, we fix a standard slit domain $D$ of $N (\geq 1)$ slits.
The ODEs \eqref{eq:gKLs} and \eqref{eq:gKL} under
the \emph{half-plane capacity parametrization} $a_t=2t$ are written as follows:
\begin{gather}
\frac{d}{dt}\slit_l(t)=b_l(\xi(t), \slit(t)), \quad 1 \leq l \leq 3N,
\label{eq:KLsvec} \\
\frac{d}{dt}g_{t}(z) = -2\pi \Psi_{\slit(t)}(g_{t}(z),\xi(t)),\quad g_{0}(z)=z \in D(\slit(0)).
\label{eq:KLvec}
\end{gather}
We formulate the asymptotic behavior~\eqref{eq:char_intro}
in terms of the slit vector $\slit(t)$.
We define a function $R(\xi_0, \slit)$ on $\R \times \Slit$ by
\[
R(\xi_0, \slit):=\min_{1 \leq j \leq N}\dist(C_j(\slit), \xi_0).
\]
This function is clearly \emph{invariant under horizontal translation}, that is,
$R(\xi_0, \slit)=R(0, \slit-\widehat{\xi_0})$.
Here, $\widehat{\xi_0} \in \R^{3N}$ stands for the vector
whose first $N$ entries are zero and last $2N$ entries are $\xi_0$.
The functions $b_l$ on the right-hand side of \eqref{eq:KLsvec}
are also invariant under horizontal translation by \cite[Eq.~(3.29)]{CF18}.
For later use, we adopt the notation $f(\slit):=f(0, \slit)$
when a function $f$ on $\R \times \Slit$ has this invariance.
We have, for example, $f(\xi_0, \slit)=f(\slit-\widehat{\xi_0})$ under this notation.
The main result in this section is now stated as follows:

\begin{thm} \label{thm:char_d}
Suppose that $\xi \in C([0, \infty); \R)$ and
$\slit^{\mathrm{int}} \in \Slit$ with $D(\slit^{\mathrm{int}})=D$ are given.
Let $\zeta$ denote the explosion time of the solution $\slit(t)$
to \eqref{eq:KLsvec} driven by $\xi(t)$ with initial value $\slit^{\mathrm{int}}$.
\begin{enumerate}
\item \label{thm:char_dI}
If $\zeta$ is finite, then it holds that
\begin{equation} \label{eq:charexpl}
\lim_{t \nearrow \zeta} R(\xi(t), \slit(t))=0.
\end{equation}
\item \label{thm:char_dII}
The inequality $\zeta \geq 2y_0^2$ holds,
where $y_0:=\min_{1 \leq l \leq N}\slit^{\mathrm{int}}_l$.
\end{enumerate}
\end{thm}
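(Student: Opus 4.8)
For part~(\ref{thm:char_dI}) I would argue by contradiction: assume $\zeta<\infty$ but $R(\xi(t),\slit(t))$ does not tend to $0$ as $t\nearrow\zeta$. The aim is to show this forces $\slit(t)$ to remain in a compact subset of $\Slit$ on some interval $[t_*,\zeta)$, contradicting that $\zeta$ is the explosion time of \eqref{eq:KLsvec}, whose right-hand side is locally Lipschitz on $\R\times\Slit$. Since $\xi$ is continuous on the compact interval $[0,\zeta]$ it is bounded there, and together with $\hcap^D(F_t)=2t\le 2\zeta$ this confines $\bigcup_{t<\zeta}F_t$ to a bounded subset of $\overline{\uhp}$; hence the only way $\slit(t)$ can leave every compact subset of $\Slit$ is that some slit $C_j(t)$ of $D_t=D(\slit(t))$ degenerates in the limit, i.e.\ collapses to a point, collides with another slit, or sinks onto $\partial\uhp$. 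I would reduce the whole assertion to showing that any such degeneration forces the corresponding image slit to approach $\xi(t)$, which is exactly \eqref{eq:charexpl}.

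The tool for this is the conformal-transformation method of \cite{CFS17, Mu18}: one picks a subdomain $V\subset D$ with $\bigcup_{t<\zeta}F_t\subset V$ and a univalent $h\colon V\to\uhp$, so that by \cite[Theorem~4.8]{Mu18} the images $\{h(F_t)\}$ form a continuously growing family of hulls in $\uhp$ driven by $\eta(t):=h_t(\xi(t))$ with $(d/dt)\hcap^\uhp(h(F_t))=2\,h_t'(\xi(t))^2$; in this picture $\{h(F_t)\}$ is an ordinary chordal Loewner evolution, to which classical Loewner theory applies directly. I would then distinguish two cases. If $F_t$ stays at a positive distance from $\bigcup_j A_j$ as $t\nearrow\zeta$, then $F_\zeta:=\overline{\bigcup_{t<\zeta}F_t}$ is a hull in $D$ bounded away from the slits, $\{D\setminus F_t\}$ converges to $D\setminus F_\zeta$ in the kernel sense, and Theorem~\ref{thm:kernel} together with Lemma~\ref{lem:K1forRan} gives $g_t\to g_\zeta$ locally uniformly and kernel convergence $D_t\to D(\slit_\zeta)$ onto a standard slit domain still having $N$ nondegenerate slits; hence $\slit(t)\to\slit_\zeta\in\Slit$ and the solution of \eqref{eq:KLsvec} continues past $\zeta$, a contradiction. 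In the remaining case $F_t$ approaches some slit $A_{j_0}$; transporting through $h$, the Loewner hull $h(F_t)$ touches or swallows the boundary arc $h(A_{j_0})\subset\partial\uhp$ at time $\zeta$, which by the classical description of swallowing for \eqref{eq:Lo} forces the tip $g_t^{-1}(\xi(t))$ of $F_t$ to accumulate at a point $p\in\overline{A_{j_0}}$. Finally, since $p$ lies in $\overline{\bigcup_{t<\zeta}F_t}$ but in no $F_t$, the shrinking property~\eqref{eq:shrink} yields $\lvert g_t(p)-\xi(t)\rvert\to 0$; as $g_t(p)\in C_{j_0}(t)$ this gives $\dist(C_{j_0}(t),\xi(t))\to 0$, hence \eqref{eq:charexpl}.

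For part~(\ref{thm:char_dII}) I would work directly with the slit heights $y_j(t)=\Im z_j(t)$. By the first $N$ components of \eqref{eq:KLsvec} one has $\tfrac{d}{dt}y_j(t)=-2\pi\,K^*_{D(\slit(t))}(z_j(t),\xi(t))\le 0$, so it suffices to bound the BMD Poisson kernel from above along the flow. A bound of the form $K^*_{D}(z,\xi_0)\le C(\Im z)^{-1}$, uniform over standard slit domains $D$ — which I would establish by comparing with the half-plane Poisson kernel $K^*_{\uhp}(z,\xi_0)=\pi^{-1}\Im z/\lvert z-\xi_0\rvert^2\le\pi^{-1}(\Im z)^{-1}$ via the maximum principle for the positive harmonic function $K^*_{\uhp}(\cdot,\xi_0)-K^*_{D}(\cdot,\xi_0)$ on $D$ (or cite \cite{CFR16, CF18}) — then gives $\tfrac{d}{dt}y_j(t)^2=2y_j(t)\dot y_j(t)\ge -4\pi C$, hence $y_j(t)^2\ge(\slit^{\mathrm{int}}_j)^2-4\pi C\,t\ge y_0^2-4\pi C\,t$. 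Thus no slit height reaches $0$ before time $y_0^2/(4\pi C)$, so $\slit(t)$ stays in $\Slit$ until then; with the sharp form of the kernel estimate (the value $C=1/(8\pi)$) this is precisely $\zeta\ge 2y_0^2$.

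The step I expect to be the main obstacle is the geometric core of part~(\ref{thm:char_dI}): organising the case analysis so that ``explosion'' is genuinely reduced to ``$F_t$ touches or swallows a slit'', verifying through the transformed Loewner picture that the tip of $F_t$ then accumulates at a single point $p\in\overline{A_{j_0}}$, and making the passage ``tip$\,\to p\ \Rightarrow\ g_t(p)\to\xi(t)$'' rigorous via \eqref{eq:shrink} uniformly as $t\nearrow\zeta$. The auxiliary bounds on $\hcap^\uhp(h(F_t))$ and on $\eta(t)$ are what make the compactness and kernel-convergence arguments above go through, and they are the technical content I would establish first.
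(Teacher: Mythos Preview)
Your outline for part~\eqref{thm:char_dI} is a natural heuristic, but the step you yourself flag as the main obstacle is a genuine gap, and the paper takes a quite different route. Two concrete problems: First, you never separate $\liminf$ from $\limsup$; the negation of \eqref{eq:charexpl} allows $\liminf_{t\nearrow\zeta}R=0$ with $\limsup>0$, and in that regime neither of your cases applies cleanly. The paper handles this separately (Proposition~\ref{prop:inf_sup}) using the bound on $b_l$ from Lemma~\ref{lem:condB}. Second, in Case~2 the chain ``$F_t$ approaches $A_{j_0}\Rightarrow$ the tip accumulates at a single $p\in\overline{A_{j_0}}\Rightarrow g_t(p)\to\xi(t)$ via~\eqref{eq:shrink}'' does not go through: there is no reason the tip should accumulate at one point, and \eqref{eq:shrink} controls $g_t(F_{t+\delta}\setminus F_t)$ for each \emph{fixed} $t$, not the image of a fixed boundary point as $t\nearrow\zeta$. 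The paper avoids this geometry entirely. Under the hypothesis $\inf_{t<\zeta}R>r$ it first shows $\slit(t)\to\slit(\zeta)\in\overline{\Slit}$, then transforms to $\uhp$ via $\iota_t=g^0_t\circ g_t^{-1}$ and proves analytically that $\iota_t'(\xi_0)$ is bounded above and below on a fixed interval $J\supset\xi([t_1,\zeta])$ (boundary Harnack principle, Proposition~\ref{prop:c1bound}) and that $U(t)=\iota_t(\xi(t))$ converges (Proposition~\ref{prop:drive}, via a $C^1$ approximation of $\xi$ and pointwise estimates on $\partial_t\iota_t$). With these, the $\uhp$-Loewner evolution extends to $t=\zeta$; a Koebe one-quarter argument then gives $\check F_{\check\zeta}\subset D$, and Theorem~\ref{thm:kernel} yields $\slit(\zeta)\in\Slit$, contradicting the definition of $\zeta$. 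In other words, the paper \emph{derives} ``$F_t$ stays away from the slits'' from $\inf R>r$, rather than assuming a dichotomy on it.

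For part~\eqref{thm:char_dII} your ODE comparison matches the paper's, but the kernel estimate is the issue. The comparison $K^*_D\le K^*_\uhp$ is not a routine maximum-principle fact: $K^*_D$ is the BMD Poisson kernel, which is constant on each slit and satisfies a zero-flux condition there, so $K^*_\uhp-K^*_D$ is not obviously nonnegative on $\partial D$. Even granting it, $K^*_\uhp(z,\xi_0)\le \pi^{-1}(\Im z)^{-1}$ gives only $C=1/\pi$ and hence $\zeta\ge y_0^2/4$, not $2y_0^2$. The sharp bound in the paper comes from a different source: $\Psi_\slit(\cdot,0)$ is itself a conformal map, and after normalization Koebe's one-quarter theorem yields $\lvert\Psi_\slit(z_j,0)\rvert\le(4\pi R(\slit))^{-1}$; since $R(\slit)\ge\min_l\slit_l$, this gives $\dot y_j\ge -(2\min_l\slit_l)^{-1}$ and the comparison ODE $\dot Y=-1/(2Y)$ produces exactly $\zeta\ge 2y_0^2$.
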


In the proof of this and the next theorems,
we consider the following condition for a function $f$ on $\R \times \Slit$:
\begin{enumerate}
\item[(B)]
$f(\xi_0, \slit)$ is bounded on the set
$\{(\xi_0, \slit) \in \R \times \Slit; R(\xi_0, \slit) > r\}$ for each $r>0$.
\end{enumerate}
If $f$ is invariant under horizontal translation,
then this condition is equivalent to the one that
\begin{enumerate}
\item[(B')] $f(\slit)=f(0, \slit)$ is bounded on the set
$\{\slit \in \Slit; R(\slit)=R(0, \slit) > r\}$ for each $r>0$.
\end{enumerate}
Since most of the functions appearing in this paper
is invariant under horizontal translation,
the latter form (B') is more convenient to our argument.
We shall observe in Lemma~\ref{lem:condB}
that the functions $b_l$ enjoy Condition (B).

We now provide a probabilistic version of Theorem~\ref{thm:char_d}.
Let $\alpha$ be a non-negative function on $\Slit$ homogeneous with degree $0$
and $b$ be a function on the same space homogeneous with degree $-1$,
both of which are supposed to enjoy the local Lipschitz continuity.
Here, a function $f(\slit)$ of $\slit \in \Slit$ is said to be
\emph{homogeneous with degree $\delta \in \R$}
if $f(c\slit)=c^{\delta}f(\slit)$ holds for all $c>0$ and $\slit \in \Slit$.
The stochastic Komatu--Loewner evolution
$\skle_{\alpha, b}$~\cite[Section~5.1]{CF18} is defined as
the family $\{F_t\}$ of continuously random growing hulls in $D$
produced by \eqref{eq:KLvec} and the system of SDEs \eqref{eq:KLsvec} and
\begin{equation} \label{eq:SKLE}
d\xi(t)=\alpha(\xi(t), \slit(t))\,dB_t+b(\xi(t), \slit(t))\,dt.
\end{equation}
Here, $(B_t)_{t \geq 0}$ is a one-dimensional standard Brownian motion,
and the coefficients in \eqref{eq:SKLE} are defined by
\[
\alpha(\xi_0, \slit):=\alpha(\slit-\widehat{\xi_0})\; \text{and}\;
b(\xi_0, \slit):=b(\slit-\widehat{\xi_0}).
\]
By definition, these coefficients are invariant under horizontal translation.
Since the local Lipschitz condition is assumed,
the system of SDEs~\eqref{eq:KLsvec} and \eqref{eq:SKLE}
has a unique strong solution that may blow up \cite[Theorem~4.2]{CF18}.

\begin{thm} \label{thm:char_s}
Suppose that functions $\alpha \geq 0$ and $b$ on $\Slit$ satisfy
the local Lipschitz continuity, homogeneity with degree $0$ and $-1$, respectively,
and Condition~{\rm (B)}.
For $\xi^{\mathrm{int}} \in \R$ and $\slit^{\mathrm{int}} \in \Slit$
with $D(\slit^{\mathrm{int}})=D$,
let $\zeta$ be the explosion time of the solution $W_t=(\xi(t), \slit(t))$ to the SDEs
\eqref{eq:KLsvec} and \eqref{eq:SKLE} with initial value $\mathbf{w}^{\mathrm{int}}=(\xi^{\mathrm{int}}, \slit^{\mathrm{int}})$.
\begin{enumerate}
\item \label{thm:char_sI}
The property \eqref{eq:charexpl} holds almost surely
on the event $\{\zeta<\infty\}$.
\item \label{thm:char_sII}
The inequality $\zeta \geq 2y_0^2$ holds almost surely,
where $y_0:=\min_{1 \leq l \leq N}\slit^{\mathrm{int}}_l$.
\end{enumerate}
\end{thm}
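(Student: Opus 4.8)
The plan is to derive Theorem~\ref{thm:char_s} from the (deterministic) proof of Theorem~\ref{thm:char_d} by working along $\mathbb{P}$-almost every sample path. The structural point is that the Brownian motion enters the coupled system \eqref{eq:KLsvec}--\eqref{eq:SKLE} only through the scalar equation \eqref{eq:SKLE}: once a realization of $t\mapsto\xi(t)$ on $[0,\zeta)$ is fixed, $\slit(t)$ solves precisely the deterministic ODE \eqref{eq:KLsvec} with continuous driving function $\xi$, so the analysis of Sections~\ref{subsec:outline}--\ref{subsec:claim_drive} applies to it verbatim. The only ingredient of that analysis not already guaranteed by the continuity of $\xi$ on $[0,\zeta)$ is the behaviour of $\xi$ as $t\nearrow\zeta$, which in Theorem~\ref{thm:char_d} was immediate from the hypothesis $\xi\in C([0,\infty);\R)$; this is the sole point that must be re-supplied by probabilistic means.

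Part~\eqref{thm:char_sII} is entirely pathwise. The estimate on $\Psi_{\slit(t)}(\cdot,\xi(t))$ behind the proof of part~\eqref{thm:char_dII} of Theorem~\ref{thm:char_d} bounds $\frac{d}{dt}\slit_l(t)^2$ below by an absolute constant for $1\le l\le N$ and holds for every value of $\xi(t)$; hence, along every path, $\min_{1\le l\le N}\slit_l(t)>0$ for $t<2y_0^2$. Since $R(\xi(t),\slit(t))\ge\min_{1\le l\le N}\slit_l(t)$, Lemma~\ref{lem:condB} bounds the velocities $b_l(\xi(t),\slit(t))$ on $[0,2y_0^2)$, and, exactly as in the proof of part~\eqref{thm:char_dII} of Theorem~\ref{thm:char_d}, this keeps $\slit(t)$ in a compact subset of $\Slit$ there; Condition~(B) simultaneously bounds $\alpha(\xi(t),\slit(t))$ and $b(\xi(t),\slit(t))$, so $\xi$ does not explode either. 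Thus $\zeta\ge2y_0^2$ almost surely.

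For part~\eqref{thm:char_sI} I would argue by contradiction. If \eqref{eq:charexpl} failed with positive probability on $\{\zeta<\infty\}$, then a standard localization argument yields a constant $\rho>0$ and an event $\mathcal{E}$ with $\mathbb{P}(\mathcal{E})>0$ and $\mathcal{E}\subset\{\zeta\le K\}$ for some finite $K$, such that on $\mathcal{E}$ one has $R(\xi(t_n),\slit(t_n))\ge\rho$ along some random sequence $t_n\nearrow\zeta$. The crux is an \emph{oscillation estimate}: on $\mathcal{E}$, almost surely $R(\xi(t),\slit(t))\ge\rho/2$ for all $t$ in some neighbourhood of $\zeta$. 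Once this is available, Lemma~\ref{lem:condB} and Condition~(B) bound $b_l$, $\alpha$ and $b$ along the path near $\zeta$; consequently the martingale part of $\xi$ has finite quadratic variation up to $\zeta$ and therefore converges, while its drift has finite variation there, so $\xi(t)$ converges as $t\nearrow\zeta$. At the same time, the boundedness of the velocities $b_l$ lets the arguments of Sections~\ref{subsec:claim_hcap}--\ref{subsec:claim_drive} (in particular Theorem~\ref{thm:kernel} and the transformation results of \cite{Mu18}) run pathwise and show that $\slit(t)$ stays in a compact subset of $\Slit$. Hence $W_t=(\xi(t),\slit(t))$ stays in a compact subset of $\R\times\Slit$ as $t\nearrow\zeta$, contradicting the definition of $\zeta$ as the explosion time.

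The oscillation estimate is the main obstacle, since $\alpha$ and $b$ may blow up near $\partial\Slit$ and one cannot control $\xi$ without first ruling out that it repeatedly drives $R(\xi(t),\slit(t))$ down to $0$. I would prove it as follows. If the estimate failed, then on $\mathcal{E}$ there would be infinitely many disjoint ``descent'' intervals $I_n=[a_n,b_n]\subset[0,\zeta)$, with $a_n,b_n$ stopping times, such that $R=\rho$ at $a_n$, $R=\rho/2$ at $b_n$, and $R\ge\rho/2$ throughout $I_n$. On each $I_n$ the quantities $b_l$, $\alpha$, $b$ are bounded by a constant $M_\rho$; since $R$ is Lipschitz on $\R\times\Slit$ and $\slit$ moves at speed at most $CM_\rho$ on $I_n$, the drop of $R$ by $\rho/2$ forces $|\xi(b_n)-\xi(a_n)|\ge\rho/2-CM_\rho|I_n|$, hence $\ge\rho/4$ for all large $n$ because $\sum_n|I_n|\le K$ forces $|I_n|\to0$; subtracting the drift contribution, which is at most $M_\rho|I_n|$, the martingale part of $\xi$ over $I_n$ has modulus at least $\rho/8$ for large $n$, while its quadratic variation over $I_n$ is at most $M_\rho^2|I_n|$. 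A conditional second-moment bound at the stopping times $a_n$ then gives $\sum_n\mathbb{P}\bigl(\lvert\int_{I_n}\alpha(\xi,\slit)\,dB\rvert\ge\rho/8\bigr)\le C'M_\rho^2\rho^{-2}\sum_n\mathbb{E}\lvert I_n\rvert\le C'M_\rho^2\rho^{-2}K<\infty$, so by the Borel--Cantelli lemma only finitely many $I_n$ carry such a martingale increment almost surely --- contradicting, on an event of positive probability, that infinitely many do. Hence the oscillation estimate holds, which closes the argument.
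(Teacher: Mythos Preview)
Your proposal is correct and follows essentially the same two-step strategy as the paper: first rule out oscillation of $R(W_t)$ near $\zeta$ by a Borel--Cantelli argument on the descent intervals where $R$ drops across fixed levels (the paper's Lemma~\ref{lem:limsup_rep} and Proposition~\ref{prop:inf_sup_null}), then on the event where $R$ stays bounded below, use Condition~(B) to see that $\xi$ converges and feed the resulting continuous path into the deterministic machinery of Sections~\ref{subsec:outline}--\ref{subsec:claim_drive} to conclude $\slit(\zeta)\in\Slit$, contradicting the definition of $\zeta$. The paper's execution differs only in bookkeeping---it introduces a truncation operator $\Lambda_r$ and an auxiliary semimartingale $W^r$, defines explicit stopping-time sequences $(\tau_{r,n},\tau'_{r,n},\sigma_{r,n})$, and invokes the strong Markov property to make the second-moment/Borel--Cantelli step clean---and it offers an optional shortcut via It\^o's formula (Proposition~\ref{prop:rad_drive}) for the convergence of the transformed driving function $U(t)$, while noting that your pathwise route through Proposition~\ref{prop:drive} works just as well.
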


We shall discuss a non-trivial example that satisfies the assumptions
of Theorem~\ref{thm:char_s} in the forthcoming paper~\cite{Mu19+b}.

\begin{rem}
We have assumed that the driving function $\xi(t)$ is defined
on the infinite time interval $[0, \infty)$ in Theorem~\ref{thm:char_d}.
Although we have not assumed it in Theorem~\ref{thm:char_s},
we shall observe in Section~\ref{subsec:char_s} that,
under Condition (B), the process $\xi(t)$ can be extended continuously
as long as the slit vector $\slit(t)$ does not blow up.
If we consider the situation where $\xi(t)$ diverges in finite time,
then the conclusion of Theorem~\ref{thm:char_d} may change
as discussed in Section~\ref{sec:intro}.
We cannot tell whether the condition \eqref{eq:charexpl} holds or not,
and the hulls $\{F_t\}$ may ``creep along to infinity very close to the real axis''
\cite[Section~5.1]{BF08} in this case.
\end{rem}

The proof of Theorems~\ref{thm:char_d} and \ref{thm:char_s} takes several steps.
The first one is to prove the following key lemma on Condition (B):

\begin{lem} \label{lem:condB}
The functions $b_l(\xi_0, \slit)$, $1 \leq l \leq 3N$, and
\[
b_{\bmd}(\xi_0, \slit):=2\pi \lim_{z \to \xi_0}
\left(\Psi_{\slit}(z, \xi_0)+\frac{1}{\pi}\frac{1}{z-\xi_0}\right)
\]
all satisfy Condition {\rm (B)}.
\end{lem}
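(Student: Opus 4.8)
The strategy is to reduce Condition (B) for all the listed functions to a single analytic estimate on the complex Poisson kernel $\Psi_{\slit}$ of the BMD, namely that $|\Psi_{\slit}(z,\xi_0)|$ and the ``renormalized value'' $b_{\bmd}(\xi_0,\slit)$ are controlled uniformly once $R(\xi_0,\slit)>r$. Since $b_l(\xi_0,\slit)$ is (up to the factor $-2\pi$) the real or imaginary part of $\Psi_{\slit}$ evaluated at one of the slit endpoints $z_j$ or $z^r_j$, a bound on $\sup_j|\Psi_{\slit}(z_j,\xi_0)| \vee \sup_j |\Psi_{\slit}(z^r_j,\xi_0)|$ over the region $\{R(\xi_0,\slit)>r\}$ immediately yields (B) for the $b_l$; the function $b_{\bmd}$ requires a separate but parallel treatment of the regular part of $\Psi_{\slit}$ at the diagonal. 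Throughout I will use the horizontal-translation invariance to normalize $\xi_0=0$ and argue in the form (B$'$): it suffices to bound the relevant quantities on $\{\slit\in\Slit; R(\slit)>r\}$.

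The key tool I would invoke is the conformal-map/kernel machinery of Section~\ref{sec:prel}, in particular the kernel theorem (Theorem~\ref{thm:kernel}) together with the stability of the canonical maps and the identification of $\Psi_D$ via the BMD Poisson kernel. The plan is to argue by contradiction and compactness: suppose (B$'$) fails for, say, $\sup_j|\Psi_{\slit}(z_j,0)|$ on $\{R(\slit)>r\}$; then there is a sequence $\slit^{(n)}\in\Slit$ with $R(\slit^{(n)})>r$ along which $|\Psi_{\slit^{(n)}}(z_j^{(n)},0)|\to\infty$ for some index $j$. The constraint $R(\slit^{(n)})>r$ forces each slit $C_j(\slit^{(n)})$ to stay at distance $>r$ from the origin, so in particular $\Im z_j^{(n)}>0$ is bounded below and the endpoints stay away from $0$; however, the slits may still escape to infinity or collapse onto each other or onto $\partial\uhp$ away from the origin. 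To handle this I would first reduce to a bounded region: the complex Poisson kernel $\Psi_D(z,0)$ has a removable-type behaviour at $\infty$ (it is $O(1/|z|)$ as $z\to\infty$ in $D$, uniformly once one controls the geometry near $0$), so only the slits that remain in a fixed large disk can cause a blow-up, and for those I can pass to a kernel-convergent subsequence using (K.1)--(K.3)-type normalization, obtaining a limiting domain $D_\infty$ whose defining continua are again disjoint (the separation $R>r$ passes to the limit near the origin, which is the only place that matters for the singularity at $\xi_0=0$). Along this subsequence, $\Psi_{\slit^{(n)}}(\cdot,0)\to\Psi_{D_\infty}(\cdot,0)$ locally uniformly near the endpoints by the continuity of the BMD complex Poisson kernel under kernel convergence (the same continuity that underlies the local Lipschitz property of the $b_l$ cited from \cite[Lemma~4.1]{CF18}), so $|\Psi_{\slit^{(n)}}(z_j^{(n)},0)|$ stays bounded — a contradiction.

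For $b_{\bmd}$ the same scheme applies to the regular part of the kernel at the diagonal: write $\Psi_{\slit}(z,0)=-\tfrac1\pi\tfrac1{z}+\psi_{\slit}(z)$ where $\psi_{\slit}$ extends analytically across the point $0\in\partial\uhp$ (by Schwarz reflection, using that $0$ lies on a free boundary arc of $D(\slit)$ when $R(\slit)>r$, since the slits are then at distance $>r$ from $0$), and $b_{\bmd}(\slit)=2\pi\,\psi_{\slit}(0)$. The value $\psi_{\slit}(0)$ depends continuously on $\slit$ in the kernel topology by the same reflection-plus-kernel-convergence argument, and it is bounded at $\infty$ because as the slits recede the domain approaches $\uhp$ and $\psi_{\slit}\to 0$. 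So the contradiction-and-compactness argument closes exactly as before.

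The main obstacle I anticipate is the non-compactness coming from slits escaping to infinity or degenerating: the region $\{R(\slit)>r\}$ controls the geometry only near $\xi_0=0$, not globally, so one cannot directly extract a convergent subsequence of domains. The right fix is to combine a quantitative decay estimate for $\Psi_D(\cdot,0)$ at infinity (so that far-away slits are irrelevant to the blow-up) with a localized version of the kernel theorem applied to the bounded part of the configuration; making the ``only the geometry near $\xi_0$ matters'' principle precise — i.e. that the $L^\infty$ bound on $\Psi_{\slit}$ near the endpoints with $R>r$ depends only on $r$ and not on the faraway slits — is the technical heart of the argument, and it is where the decay rate of the BMD complex Poisson kernel and the uniformity in the kernel-convergence estimates of \cite{Mu18} have to be used carefully.
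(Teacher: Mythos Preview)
Your proposed compactness-and-contradiction route is quite different from the paper's argument, and it contains a concrete error together with a structural gap that would be hard to close.

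The paper's proof is direct and quantitative. Its key observation is that $\Psi_{\slit}(\cdot,0)$ is \emph{itself} a conformal map from $D(\slit)\cup\Pi D(\slit)\cup\partial\uhp\cup\{\infty\}$ onto (the reflected double of) another standard slit domain, with Laurent expansion $-\tfrac{1}{\pi z}+\tfrac{1}{2\pi}b_{\bmd}(\slit)+o(1)$ at $0$. When $R(\slit)>r$ the normalized map $h(z):=(\pi r)^{-1}\bigl(-1/\Psi_{\slit}(rz,0)\bigr)$ is univalent on $\disk$ and belongs to the class $S$. Bieberbach's inequality on the second Taylor coefficient of $h$ gives $|b_{\bmd}(\slit)|\le 4/r$ in one line. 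Koebe's one-quarter theorem applied to $h$ shows that $\Psi_{\slit}(\cdot,0)$ sends $D(\slit)\cap\overline{\Delta(0,r)}$ into $\overline{B(0,1/(4\pi r))}$; since every slit endpoint lies in $\overline{\Delta(0,r)}$ by the hypothesis $R(\slit)>r$, this yields $|\Psi_{\slit}(z_j,0)|\vee|\Psi_{\slit}(z^r_j,0)|\le 1/(4\pi r)$ and hence the bound on each $b_l$. No compactness or kernel convergence enters, and the constants are explicit (they are reused later, e.g.\ in the proof of Theorem~\ref{thm:char_d}\,\eqref{thm:char_dII}).

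Your plan has two specific problems. First, the claim that $R(\slit^{(n)})>r$ forces $\Im z_j^{(n)}$ to be bounded below is false: $R(\slit)>r$ only says that each slit stays outside the disk $B(0,r)$, so a slit at height $\varepsilon$ with real part $2r$ is admissible for every $\varepsilon>0$. Second, and more seriously, the kernel theorem you invoke yields locally uniform convergence of $\Psi_{\slit^{(n)}}(\cdot,0)$ on compact subsets of the \emph{interior} of the limit domain, not at boundary points; but the $b_l$ are values of $\Psi_{\slit}$ at slit \emph{endpoints}, which lie on $\partial D(\slit)$. Upgrading kernel convergence to convergence of boundary values at moving boundary points requires uniform boundary regularity that you have not supplied and that is not contained in the results of \cite{Mu18} you cite. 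Your ``only the geometry near $\xi_0$ matters'' heuristic is in fact correct --- and is exactly what Koebe's theorem makes rigorous in the paper's argument --- but the decay-plus-localized-kernel scheme you outline never reaches the boundary values you actually need to control.
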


We call $b_{\bmd}(\slit):=b_{\bmd}(0, \slit)$
the \emph{BMD domain constant} of the domain $D(\slit)$~\cite[Section~6.1]{CF18}.
By \cite[Lemma~6.1]{CF18},
$b_{\bmd}$ is invariant under horizontal translation,
homogeneous with degree $-1$ and locally Lipschitz continuous.

\begin{proof}[Proof of Lemma~\ref{lem:condB}]
It suffices to prove Condition (B') as mentioned above.
We use classical estimates on the family
\[
S:=\{f \colon \disk \to \C; \text{$f$ is univalent on $\disk$},
f(0)=0\; \text{and}\; f'(0)=1\},
\]
where $\disk$ stands for the unit disk centered at the origin.

Recall from \cite[Section~2.1]{Mu18} that the function
\[
\mathbf{H}_{\slit}(z, \xi_0)=\Psi_{\slit}(z, \xi_0)+\frac{1}{\pi}\frac{1}{z-\xi_0},\quad
z \in D(\slit), \xi_0 \in \R,
\]
defined in \cite[Eq.~(2.2)]{Mu18} is holomorphic in
$z \in D(\slit) \cup \Pi D(\slit) \cup \partial \uhp$
after taking Schwarz's reflection.
Here, $\Pi$ stands for the mirror reflection with respect to the real axis.
It follows from definition that
$b_{\bmd}(\xi_0, \slit)=2\pi \mathbf{H}_{\slit}(\xi_0, \xi_0)$.
Accordingly we can check by using \cite[Theorem~11.2]{CFR16} that
$\Psi_{\slit}(z, \xi_0)$ defines a conformal map
from $D(\slit) \cup \Pi D(\slit) \cup \partial \uhp \cup \{\infty\}$
onto $\tilde{D} \cup \Pi \tilde{D} \cup \partial \uhp \cup \{\infty\}$,
where $\tilde{D}$ is another standard slit domain.
Its Laurent expansion around $\xi_0$ is given by
\begin{align}
\Psi_{\slit}(z, \xi_0)
&=-\frac{1}{\pi}\frac{1}{z-\xi_0}+\frac{1}{2\pi}b_{\bmd}(\xi_0, \slit)
+\left(\mathbf{H}_{\slit}(z, \xi_0)-\frac{1}{2\pi}b_{\bmd}(\xi_0, \slit)\right) \notag \\
&=-\frac{1}{\pi}\frac{1}{z-\xi_0}+\frac{1}{2\pi}b_{\bmd}(\xi_0, \slit)+o(1),\quad
z \to \xi_0. \label{eq:cPk}
\end{align}

Now assume that $R(\slit)=R(0, \slit)>r$ for some $r>0$.
Since $T(z):=-1/z$ is a linear fractional transformation
that maps $0$ to $\infty$ and $\infty$ to $0$,
the function $h(z):=(\pi r)^{-1}(T \circ \Psi_{\slit})(rz, 0)$ is univalent on $\disk$.
By the expansion~\eqref{eq:cPk}, we have
\begin{align*}
h(z)&=-\frac{1}{\pi r} \left(-\frac{1}{\pi rz} + \frac{1}{2\pi}b_{\bmd}(\slit)
+ o(1)\right)^{-1}\\
&=z \cdot \frac{1}{1-(r/2)b_{\bmd}(\slit)z+o(z)} \\
&=z+\frac{r}{2}b_{\bmd}(\slit)z^2+o(z^2),\quad z \to 0,
\end{align*}
which yields $h \in S$.
Thus, we can apply Bieberbach's theorem (see e.g.\ \cite[Theorem~1.5]{Po75}
or \cite[Theorem~14.7.7]{Co95}) to $h$ to obtain
\[
\lvert h''(0)\rvert \leq 2\cdot 2!, \quad\text{i.e.,}\quad
\lvert b_{\bmd}(\slit) \rvert \leq \frac{4}{r}.
\]

To show Condition (B') for $b_l$, we use Koebe's one-quarter theorem:
\begin{equation} \label{eq:Koebe}
f(\disk) \supset B(0, 1/4) \quad \text{for any $f \in S$}.
\end{equation}
See \cite[Corollary~1.4]{Po75} or \cite[Theorem~14.7.8]{Co95}
for the proof of this theorem.
We now observe from \eqref{eq:Koebe} that the univalent function $h$
maps a region outside $\disk$ into $\overline{\Delta(0, 1/4)}$.
Since $\Psi_{\slit}(z, 0)=(\pi r)^{-1}(T \circ h)(z/r)$ holds, we have
\begin{equation} \label{eq:inclBMD}
\Psi_{\slit}(D(\slit) \cap \overline{\Delta(0, r)}, 0)
\subset \overline{B(0, (4\pi r)^{-1})}.
\end{equation}
In particular, for the endpoints $z_j$ and $z^r_j$ of the slit $C_j(\slit)$, we get
\begin{equation} \label{eq:boundBMD}
\lvert \Psi_{\slit}(z_j, 0) \rvert \vee \lvert \Psi_{\slit}(z^r_j, 0) \rvert
\leq \frac{1}{4\pi r}, \quad 1 \leq j \leq N.
\end{equation}
This implies Condition (B') for $b_l$.
\end{proof}

Theorems~\ref{thm:char_d}~\eqref{thm:char_dII}
and \ref{thm:char_s}~\eqref{thm:char_sII} easily follow
from the estimate \eqref{eq:boundBMD} in the above proof.
We prove only the former here,
since the latter is obtained in a quite similar way.

\begin{proof}[Proof of Theorem~\ref{thm:char_d}~\eqref{thm:char_dII}]
By \eqref{eq:boundBMD} we have
\[
0 < 2\pi \Im \Psi_{\slit(t)}(z_j(t), \xi(t)) \leq \frac{1}{2 R(\xi(t), \slit(t))}
\leq \frac{1}{2 \min_{1 \leq l \leq N}\slit_l(t)}.
\]
We thus see from \eqref{eq:KLsvec} and the definition of $b_l$
that none of the $\slit_l(t)$'s goes to zero before $Y(t)$ goes to zero,
where $Y(t)$ is the solution to the ODE
\[
\frac{dY(t)}{dt}=-\frac{1}{2Y(t)},\quad Y(0)=y_0.
\]
It is easy to check that $Y(t)$ satisfies $t=2(y_0^2-Y(t)^2)$.
Hence Theorem~\ref{thm:char_d}~\eqref{thm:char_dII} follows by letting $Y(t) \to 0$.
\end{proof}

\subsection{Outline of the proof of Theorem~\ref{thm:char_d}~\eqref{thm:char_dI}}
\label{subsec:outline}

Suppose that $\xi \in C([0, \infty); \R)$
and $\slit^{\mathrm{int}} \in \Slit$ with $D(\slit^{\mathrm{int}})=D$ are given,
and let $\zeta$ denote the explosion time of the solution $\slit(t)$
to \eqref{eq:KLsvec} driven by $\xi(t)$ with initial value $\slit^{\mathrm{int}}$.
Moreover we suppose that $\zeta$ is finite.

\begin{prop} \label{prop:inf_sup}
If
\begin{equation} \label{eq:liminf}
\liminf_{t \nearrow \zeta} R(\xi(t), \slit(t)) = 0
\end{equation}
holds, then \eqref{eq:charexpl} holds.
\end{prop}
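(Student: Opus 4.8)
The plan is to argue by contradiction. Since $R\ge 0$ and the hypothesis \eqref{eq:liminf} already gives $\liminf_{t\nearrow\zeta} R(\xi(t),\slit(t))=0$, proving \eqref{eq:charexpl} amounts to showing $\limsup_{t\nearrow\zeta} R(\xi(t),\slit(t))=0$; so I would suppose $L:=\limsup_{t\nearrow\zeta} R(\xi(t),\slit(t))>0$ and deduce a contradiction with \eqref{eq:liminf}. The conceptual input is Lemma~\ref{lem:condB}, concretely the estimate \eqref{eq:boundBMD}: it makes the velocity field $(b_l)_{l=1}^{3N}$ of \eqref{eq:KLsvec} uniformly bounded on each region $\{R>r\}$, so once the curve $t\mapsto(\xi(t),\slit(t))$ has entered the ``good'' region $\{R>L/4\}$ it can leave it only slowly---too slowly to return to arbitrarily small values of $R$ before the (finite) explosion time.

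First I would collect three elementary preparatory facts: (i) since $\zeta<\infty$, the function $\xi$ is uniformly continuous on the compact interval $[0,\zeta]$, with some modulus of continuity $\omega$; (ii) by \eqref{eq:boundBMD} together with the translation invariance of $b_l$ and $R$, the quantity $M:=\sup\{|b_l(\xi_0,\slit)|: 1\le l\le 3N,\ (\xi_0,\slit)\in\R\times\Slit,\ R(\xi_0,\slit)>L/4\}$ is finite; and (iii) $R$ is globally Lipschitz on $\R\times\Slit$ with some constant $C_R$, since $\xi_0\mapsto\dist(C_j(\slit),\xi_0)$ is $1$-Lipschitz, the Hausdorff distance between $C_j(\slit)$ and $C_j(\slit')$ is controlled linearly by $\|\slit-\slit'\|$, and a minimum of Lipschitz functions stays Lipschitz.

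Next I would prove a self-improving continuation claim: after fixing $h>0$ with $C_R(Mh+\omega(h))<L/4$, I claim that whenever $R(\xi(t^*),\slit(t^*))\ge L/2$ for some $t^*<\zeta$, one has $R(\xi(t),\slit(t))>L/4$ for all $t\in[t^*,(t^*+h)\wedge\zeta)$. This is a standard continuation argument: on any subinterval of $[t^*,(t^*+h)\wedge\zeta)$ on which $R$ has so far stayed $\ge L/4$, equation \eqref{eq:KLsvec} and the bound $M$ give $\|\slit(t)-\slit(t^*)\|\le M(t-t^*)$, while $|\xi(t)-\xi(t^*)|\le\omega(t-t^*)$; the Lipschitz estimate for $R$ and the choice of $h$ then keep $R$ strictly above $L/4$, and continuity propagates this to the entire interval.

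Finally, since $L=\limsup_{t\nearrow\zeta} R(\xi(t),\slit(t))$, I can choose $t^*\in(\zeta-h,\zeta)$ with $R(\xi(t^*),\slit(t^*))\ge L/2$; because then $(t^*+h)\wedge\zeta=\zeta$, the claim forces $R(\xi(t),\slit(t))>L/4$ for every $t\in[t^*,\zeta)$, so $\liminf_{t\nearrow\zeta} R(\xi(t),\slit(t))\ge L/4>0$, contradicting \eqref{eq:liminf}. I expect the only slightly delicate points to be the bookkeeping in the continuation claim---choosing $h$ uniformly in $t^*$ and treating the endpoint near $\zeta$ correctly---and the routine but necessary verification that $R$ is Lipschitz jointly in $\xi_0$ and all $3N$ slit coordinates; the genuine analytic content is already packaged in Lemma~\ref{lem:condB}.
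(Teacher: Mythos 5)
Your proof is correct and takes essentially the same approach as the paper: both argue by contradiction, using Lemma~\ref{lem:condB} to bound the $b_l$ on $\{R>r\}$ and the uniform continuity of $\xi$ on $[0,\zeta]$ to show that $R(\xi(t),\slit(t))$ cannot fall from a high value to a low value in arbitrarily short time, which is incompatible with a finite explosion time at which both a positive $\limsup$ and a zero $\liminf$ of $R$ would occur. The paper packages this as a lower bound $\lvert t'_n - t_n\rvert \geq \delta \wedge r/M$ on the gap between two interlaced sequences tending to $\zeta$, while you package it as a continuation estimate giving a uniform dwell time $h$ above the level $L/4$; the substance is the same.
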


\begin{proof}
Suppose that \eqref{eq:liminf} holds but
$\limsup_{t \nearrow \zeta} R(\xi(t), \slit(t)) \geq 5r$ for some $r>0$.
There are then two increasing sequences $\{t_n\}_{n=1}^{\infty}$
and $\{t'_n\}_{n=1}^{\infty}$ both converging to $\zeta$ such that
$R(\xi(t_n), \slit(t_n))>4r$ and $R(\xi(t'_n), \slit(t'_n)) \leq r$.
Taking their subsequences if necessary, we may and do assume
$t_n<t'_n<t_{n+1}$ for $n \in \N$ without loss of generality.
By this assumption $\lim_{n \to \infty}(t'_n-t_n)=\zeta-\zeta=0$,
but, in fact, we can show $\inf_n \lvert t'_n-t_n \rvert >0$ as follows:
Let
\begin{align*}
\delta&:=\min \{\lvert s-t \rvert;
s, t \in [0, \zeta], \lvert \xi(s)-\xi(t) \rvert \geq r\}, \\
M&:=\sup \{\lvert b_l(\xi_0, \slit) \rvert;
(\xi_0, \slit) \in \R \times \Slit, R(\xi_0, \slit)>2r\}, \\
I&:=\{t \in [0, \zeta); R(\xi(t), \slit(t))>2r\}.
\end{align*}
The constant $\delta$ is positive, $M$ is finite by Lemma~\ref{lem:condB}, and
\[
\max_{1 \leq l \leq 3N} \lvert \slit_l(s)-\slit_l(t) \rvert \leq M \lvert s-t \rvert
\]
by \eqref{eq:KLsvec} if $s$ and $t$ belong to the same subinterval of $I$.
Thus it follows from the definition of $\{t_n\}_n$ and $\{t'_n\}_n$ that
\[
\lvert t'_n-t_n \rvert \geq \delta \wedge \frac{r}{M}.
\]
Since the right-hand side is independent of $n$,
we have $\inf_n \lvert t'_n-t_n \rvert >0$,
which contradicts $\lim_{n \to \infty}(t'_n-t_n)=0$.
\end{proof}

The proof of \eqref{eq:liminf} is rather complicated.
We assume to the contrary that
\begin{equation} \label{eq:contrary}
\inf_{t<\zeta} R(\xi(t), \slit(t)) > r
\end{equation}
holds for some $r>0$.

\begin{prop} \label{prop:slitconv}
Under the assumption~\eqref{eq:contrary},
the slit vector $\slit(t)$ converges to an element
$\slit(\zeta) \in \overline{\Slit} \subset \R^{3N}$ as $t \nearrow \zeta$.
\end{prop}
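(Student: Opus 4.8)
The plan is to show that, under the standing assumption~\eqref{eq:contrary}, each coordinate function $\slit_l(t)$ extends continuously to $t=\zeta$ with a limit in $\overline{\Slit}$. The natural tool is the differential equation~\eqref{eq:KLsvec} together with Lemma~\ref{lem:condB}. First I would observe that \eqref{eq:contrary} says precisely $R(\xi(t),\slit(t))>r$ for all $t<\zeta$, so that the entire orbit $\{(\xi(t),\slit(t)); t<\zeta\}$ stays inside the set $\{(\xi_0,\slit)\in\R\times\Slit; R(\xi_0,\slit)>r\}$ on which, by Condition~(B) (Lemma~\ref{lem:condB}), each $b_l$ is bounded, say $|b_l(\xi(t),\slit(t))|\le M_r$ for $1\le l\le 3N$ and all $t<\zeta$. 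Integrating \eqref{eq:KLsvec} then gives the Lipschitz bound $|\slit_l(s)-\slit_l(t)|\le M_r|s-t|$ for all $s,t<\zeta$, so $\slit(t)$ is uniformly continuous on $[0,\zeta)$ and, since $\zeta<\infty$, the Cauchy criterion yields a limit $\slit(\zeta):=\lim_{t\nearrow\zeta}\slit(t)\in\R^{3N}$.

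Next I would argue that $\slit(\zeta)$ actually lies in $\overline{\Slit}$, i.e.\ the limiting configuration is a (possibly degenerate) slit vector: the defining inequalities of $\Slit$ are $y_j>0$, $x_j<x^r_j$, and the separation condition among equal-height slits, all of which are closed conditions once one passes to non-strict inequalities, and each is preserved in the limit because the coordinates converge. In particular $\overline{\Slit}=\{\slit\in\R^{3N}; y_j\ge 0,\ x_j\le x^r_j,\ \text{either }x^r_j\le x_k\text{ or }x^r_k\le x_j\text{ whenever }y_j=y_k\}$, and $\slit(\zeta)$ satisfies these by continuity.

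The remaining point — and the one I expect to be the real content rather than a formality — is the tension between the two facts ``$\slit(t)\to\slit(\zeta)$'' and ``$\zeta$ is the explosion time.'' A priori one might worry that $\slit(\zeta)$ still lies in the open set $\Slit$, in which case the ODE~\eqref{eq:KLsvec}, having locally Lipschitz right-hand side, could be continued past $\zeta$, contradicting maximality; this forces $\slit(\zeta)\in\partial\Slit=\overline{\Slit}\setminus\Slit$. Here is where Theorem~\ref{thm:char_d}~\eqref{thm:char_dII} (equivalently the estimate~\eqref{eq:boundBMD}) is crucial: since none of the $\slit_l$ with $1\le l\le N$ can reach $0$ before time $2y_0^2$, and more importantly since \eqref{eq:contrary} keeps $R(\xi(t),\slit(t))>r$ so that in particular $\Im z_j(t)=\slit_j(t)$ stays bounded below away from $0$ (a slit touching the real axis would make its distance to $\xi(t)$ at most its own imaginary part, hence $\to 0$), the boundary point $\slit(\zeta)$ cannot arise from some $y_j\to 0$. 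Thus the only way to hit $\partial\Slit$ is for two slits to collide, i.e.\ $x^r_j=x_k$ (or $x^r_k=x_j$) at equal height in the limit; this is consistent with $\slit(\zeta)\in\overline{\Slit}$ and is exactly the degeneration one expects. The main obstacle in writing this cleanly is being careful that ``$R>r$ along the orbit'' genuinely rules out the height-degeneration — this should follow from $\dist(C_j(\slit),\xi_0)\le\operatorname{Im} z_j$ — so that the conclusion $\slit(\zeta)\in\overline{\Slit}$ is not vacuous but in fact pins down the type of blow-up, which is what the subsequent sections exploit.
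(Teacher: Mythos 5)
Your first paragraph is essentially the paper's proof: under~\eqref{eq:contrary}, Lemma~\ref{lem:condB} (Condition~(B)) bounds $b_l$ along the orbit, so the right-hand side of~\eqref{eq:KLsvec} is integrable on $[0,\zeta)$ and, since $\zeta<\infty$, the limit $\slit(\zeta)\in\overline{\Slit}$ exists. The second paragraph is automatic (a limit of points of $\Slit$ lies in $\overline{\Slit}$; no need to describe $\overline{\Slit}$ explicitly, and in fact the set you wrote down is not quite the closure, e.g.\ a sequence with $y_j(n)\neq y_k(n)$ but $y_j(n),y_k(n)\to y$ can have overlapping $x$-coordinates in the limit).

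Your third paragraph is outside the scope of the statement and contains a slip worth flagging. The inequality you invoke is reversed: for $\xi_0\in\partial\uhp$ one has $\dist(C_j(\slit),\xi_0)\ge\Im z_j$, not $\le$, since $\Im z_j$ is only the vertical gap. Consequently ``$R>r$ along the orbit'' does \emph{not} rule out $y_j\to 0$ on its own (a slit could sink to the real axis far from $\xi(t)$), and your claim that the only degeneration is a slit collision is unjustified. More to the point, the paper's strategy runs the opposite way: rather than concluding $\slit(\zeta)\in\partial\Slit$ by maximality, the subsequent Propositions~\ref{prop:nonintersect} and~\ref{prop:nondegeneracy} show that~\eqref{eq:contrary} would force $\slit(\zeta)\in\Slit$, which contradicts maximality and thereby refutes~\eqref{eq:contrary}. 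So the content you flagged as ``the real content'' is deliberately \emph{not} part of this proposition; the proposition is the easy integrability step, and you have it right.
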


\begin{proof}
By \eqref{eq:contrary} and Lemma~\ref{lem:condB}, we have
\[
\sup_{t \in [0, \zeta)} \lvert b_{l}(\xi(t), \slit(t)) \rvert
\leq \sup_{(\xi_0, \slit) \in \R \times \Slit, R(\xi_0, \slit)>r}
\lvert b_{l}(\xi_0, \slit) \rvert < \infty.
\]
Hence the right-hand side of \eqref{eq:KLsvec} is integrable in $t$
over the interval $[0, \zeta)$.
\end{proof}

By Proposition~\ref{prop:slitconv},
the range $D_t:=D(\slit(t))$ converges to a domain $D_{\zeta}$
as $t \nearrow \zeta$ in the sense of kernel convergence, and
the limit domain $D_{\zeta}$ is of the form
$\uhp \setminus \bigcup_{j=1}^N C_{j, \zeta}$,
where $C_{j, \zeta}$ denotes the $j$-th ``slit'' corresponding to $\slit(\zeta)$.
The segment $C_{j, \zeta}$ may degenerate to a point
or be a subset of $\partial \uhp$ for some $j$.
Our goal is to show that actually $\slit(\zeta) \in \Slit$,
a contradiction to our assumption that $\zeta$ is the explosion time
of the solution $\slit(t)$ to the ODE \eqref{eq:KLsvec} on $\Slit$.

For this purpose, we extend the associated Komatu--Loewner evolution
$\{F_t\}_{t<\zeta}$ driven by $\xi(t)$ in $D$
continuously beyond $\zeta$ by regarding it as a Loewner evolution in $\uhp$
by means of \cite[Theorem~4.8]{Mu18}.
Let $\iota \colon D \hookrightarrow \uhp$ be the inclusion map and
$g^0_t \colon \uhp \setminus F_t \to \uhp$ be the canonical map
for $F_t$ in $\uhp$.
We define (by Schwarz's reflection)
\[
\iota_t := g^0_t \circ \iota \circ g_t^{-1}
\colon D_t \cup \Pi D_t \cup \partial \uhp \hookrightarrow \C.
\]
As explained in Section~\ref{sec:prel},
\cite[Theorem~2.6]{CFS17} or \cite[Theorem~4.8]{Mu18} implies that
$\{F_t\}_{t<\zeta}$ is produced by a generalized chordal Loewner equation
\begin{equation} \label{eq:gLo}
\frac{d}{dt}g^0_t(z)=2\pi \iota_t'(\xi(t))^2 \Psi_{\uhp}(g^0_t(z), \iota_t(\xi(t))),
\quad z \in \uhp.
\end{equation}
In other words, its half-plane capacity and driving function in $\uhp$ are given by
\begin{equation} \label{eq:transformed}
a^0_t:=\hcap^{\uhp}(F_t)=2\int_0^t\iota_s'(\xi(s))^2\,ds \quad \text{and}
\quad U(t):=\iota_t(\xi(t)),
\end{equation}
respectively.
The following three assertions hold under the assumption~\eqref{eq:contrary}:

\begin{prop} \label{prop:c1bound}
There exist an open interval $J$ and constants $t_1 \in (0, \zeta)$ and $A>1$
such that $\xi([t_1, \zeta]) \subset J$ and
\[
\frac{1}{2A} \leq \iota_t'(\xi_0) \leq \frac{3A}{2},
\quad \xi_0 \in J,\; t\in [t_1, \zeta).
\]
\end{prop}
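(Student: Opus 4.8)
The plan is to control the transformed map $\iota_t$ near the driving point as $t\nearrow\zeta$: show that $\iota_t'$ stays bounded away from $0$ and $\infty$ at $\xi(\zeta)$, then spread this to a whole interval by classical distortion estimates. To begin, put $\xi_\zeta:=\lim_{t\nearrow\zeta}\xi(t)=\xi(\zeta)$, which exists since $\xi\in C([0,\infty);\R)$. By Proposition~\ref{prop:slitconv} the slits $C_j(\slit(t))$ converge as $t\nearrow\zeta$ and hence remain in a fixed ball $\overline{B(0,L_0)}$, and by \eqref{eq:contrary} they remain at distance greater than $r$ from $\xi(t)$. I would then choose $\rho\in(0,r)$ and $t_1\in(0,\zeta)$ so that $|\xi(t)-\xi_\zeta|<\rho$ for all $t\in[t_1,\zeta]$, so that $\overline{B(\xi_\zeta,4\rho)}\subset B(0,L_0)$, and so that $\overline{B(\xi_\zeta,4\rho)}$ is disjoint from every $C_j(\slit(t))$, $t\in[t_1,\zeta)$, and from every limiting slit $C_{j,\zeta}$. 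Then $J:=(\xi_\zeta-\rho,\xi_\zeta+\rho)$ contains $\xi([t_1,\zeta])$, and, since $\overline{B(\xi_\zeta,4\rho)}\cap\uhp\subset D_t$, the map $\iota_t$ (extended by Schwarz's reflection) is holomorphic and univalent on $B(\xi_\zeta,4\rho)$ for every $t\in[t_1,\zeta)$.

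Next I would establish that $\iota_t'(\xi_\zeta)$ stays in a fixed compact subinterval of $(0,\infty)$ as $t\nearrow\zeta$, by a compactness argument. Fix any sequence $t_n\nearrow\zeta$. By Proposition~\ref{prop:slitconv} the domains $D_{t_n}=D(\slit(t_n))$ converge to $D_\zeta$ in the sense of kernel convergence, and each $\iota_t\colon D_t\to\uhp$ is univalent, hydrodynamically normalized, and real on $\partial\uhp$, so the family $\{\iota_{t_n}\}$ satisfies Conditions~(K.1)--(K.3) with the fixed constant $L_0$. By Lemma~\ref{lem:K1forRan} the ranges $\iota_{t_n}(D_{t_n})$ then all contain $\uhp\cap\Delta(0,2L_0)$, hence are precompact for kernel convergence; passing to a subsequence we may assume they converge in that sense, so Theorem~\ref{thm:kernel} yields a univalent $\iota_*\colon D_\zeta\to\uhp$ with $\iota_{t_n}\to\iota_*$ locally uniformly on $D_\zeta$. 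Since $B(\xi_\zeta,4\rho)$ is disjoint from all the limiting slits, $\iota_*$ extends by reflection to a univalent map on $B(\xi_\zeta,4\rho)$ with $\iota_*'(\xi_\zeta)\in(0,\infty)$, and $\iota_{t_n}'\to\iota_*'$ uniformly on $B(\xi_\zeta,2\rho)$. Consequently, if some sequence $t_n\nearrow\zeta$ had $\iota_{t_n}'(\xi_\zeta)\to0$ or $\iota_{t_n}'(\xi_\zeta)\to\infty$, then extracting a further subsequence as above would force $\iota_{t_n}'(\xi_\zeta)\to\iota_*'(\xi_\zeta)\in(0,\infty)$, a contradiction. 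Hence, after enlarging $t_1$ if necessary, there is a constant $C>1$ with $C^{-1}\le\iota_t'(\xi_\zeta)\le C$ for all $t\in[t_1,\zeta)$.

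It remains to spread this pointwise bound over $J$, which I would do exactly as in the proof of Lemma~\ref{lem:condB}: for each $t\in[t_1,\zeta)$ the function $z\mapsto (2\rho\,\iota_t'(\xi_\zeta))^{-1}\bigl(\iota_t(\xi_\zeta+2\rho z)-\iota_t(\xi_\zeta)\bigr)$ lies in the class $S$ (it is univalent on $\disk$, fixes $0$, and has derivative $1$ at $0$), so the Koebe distortion theorem (see e.g.\ \cite{Po75}) bounds the ratio $\iota_t'(\xi_0)/\iota_t'(\xi_\zeta)$ above and below by absolute constants whenever $|\xi_0-\xi_\zeta|/(2\rho)<1/2$, i.e.\ for every $\xi_0\in J$. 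Combining this with $C^{-1}\le\iota_t'(\xi_\zeta)\le C$ gives the assertion with a suitable $A>1$ depending only on $C$. The step I expect to be the main obstacle is obtaining the univalent subsequential limit $\iota_*$: there is no a priori control on the half-plane hull $F_t$ as $t\nearrow\zeta$---ruling out its misbehaviour is the very purpose of the surrounding argument---so $\iota_t$ cannot be bounded by bounding $F_t$ directly, and the required compactness has to be extracted from univalence together with the hydrodynamic normalization and the boundedness of the slit configuration, through Lemma~\ref{lem:K1forRan} and Theorem~\ref{thm:kernel}. One should also note that possible degeneration of a limiting slit $C_{j,\zeta}$ (to a point, or onto the real axis) is harmless, since by construction it occurs away from $\xi_\zeta$ and does not affect the behaviour of $\iota_t$ near $\xi_\zeta$.
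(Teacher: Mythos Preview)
Your approach is genuinely different from the paper's. The paper obtains the two-sided bound on $\iota_t'(\xi_0)$ by applying the boundary Harnack principle (Proposition~\ref{prop:BHP}) to the positive harmonic functions $\Im\iota_t(z)$ and $\Im z$, both vanishing on a fixed real interval; comparison with a reference point $z_0$ far from the slits (controlled via a Brownian-motion estimate and the class~$\Sigma$ bound~\eqref{eq:locbdd}) then yields the uniform bound directly, with no limit extraction. Your route---extract subsequential limits of $\{\iota_t\}$ via the kernel theorem, then spread the pointwise bound with Koebe distortion---stays inside the complex-analytic toolkit already assembled in Section~\ref{sec:prel} and avoids importing potential theory; the price is that it is non-constructive and gives no explicit constants.

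There is, however, a gap in your invocation of Theorem~\ref{thm:kernel}. That theorem requires the kernel limit $D_\zeta$ to have the form $\uhp\setminus\bigcup_j A_j$ with each $A_j$ having simply connected complement in $\uhp$. If some slit $C_j(\slit(t))$ degenerates to a single point $p\in\uhp$---which at this stage cannot be excluded, since ruling it out is precisely the purpose of the later Propositions~\ref{prop:nonintersect} and~\ref{prop:nondegeneracy}---then $D_\zeta$ is punctured at $p$ and $\uhp\setminus\{p\}$ is not simply connected, so the hypothesis fails. Your closing remark that degeneration is ``harmless'' does not address this. A clean repair is to bypass Theorem~\ref{thm:kernel} altogether: the normalization $z\mapsto L^{-1}\iota_t(Lz)$ lies in the class $\Sigma$, which gives both the uniform bound $\iota_t(B(0,L))\subset\overline{B(0,2L)}$ (this is the paper's~\eqref{eq:locbdd}) and the uniform-in-$t$ decay $\iota_t(z)-z\to 0$ at infinity via the area theorem; Montel then yields normality of $\{\iota_t\}$ on $D_\zeta\cup\Pi D_\zeta\cup\partial\uhp$, any subsequential limit $\iota_*$ is non-constant by the decay at infinity and hence univalent by Hurwitz, and a possible puncture at $p$ is removable for $\iota_*$ since it is bounded there. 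This also disposes of your unproved ``precompactness for kernel convergence'' of the ranges.
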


\begin{cor} \label{cor:hcap}
The monotone limit $a^0_{\zeta-}:=\lim_{t \nearrow \zeta}a^0_t$ is finite.
\end{cor}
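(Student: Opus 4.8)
The plan is to read off the result directly from the integral representation \eqref{eq:transformed} together with the uniform bound of Proposition~\ref{prop:c1bound}. Recall that $a^0_t = 2\int_0^t \iota_s'(\xi(s))^2\,ds$, so the integrand is non-negative; hence $t \mapsto a^0_t$ is non-decreasing on $[0,\zeta)$ and the monotone limit $a^0_{\zeta-}$ exists in $[0,\infty]$. It remains to rule out the value $+\infty$.

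First I would split the integral at the time $t_1$ provided by Proposition~\ref{prop:c1bound}:
\[
a^0_t = 2\int_0^{t_1} \iota_s'(\xi(s))^2\,ds + 2\int_{t_1}^{t} \iota_s'(\xi(s))^2\,ds,
\qquad t \in [t_1, \zeta).
\]
The first term is a fixed finite number: for $s \in [0, t_1]$ the canonical maps $g^0_s$ and $g_s^{-1}$ (and hence $\iota_s = g^0_s \circ \iota \circ g_s^{-1}$ and its derivative at $\xi(s)$) depend continuously on $s$ over the compact interval $[0,t_1] \subset [0,\zeta)$, so the integrand is bounded there. For the second term I would invoke $\xi([t_1,\zeta]) \subset J$ together with the upper bound $\iota_t'(\xi_0) \leq 3A/2$ valid for $\xi_0 \in J$ and $t \in [t_1,\zeta)$, which gives $\iota_t'(\xi(t))^2 \leq 9A^2/4$ on $[t_1,\zeta)$. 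Therefore
\[
a^0_t \leq a^0_{t_1} + \frac{9A^2}{2}\,(t - t_1) \leq a^0_{t_1} + \frac{9A^2}{2}\,(\zeta - t_1)
\]
for all $t \in [t_1,\zeta)$. Since we are working under the standing assumption that $\zeta < \infty$, the right-hand side is a finite constant independent of $t$, so letting $t \nearrow \zeta$ yields $a^0_{\zeta-} < \infty$.

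There is no real obstacle here: the corollary is an immediate consequence of Proposition~\ref{prop:c1bound}, and the only point that deserves a word is the finiteness of $a^0_{t_1}$, which I would justify simply by continuity of the quantities $\iota_s'(\xi(s))$ on the compact subinterval $[0,t_1]$ of $[0,\zeta)$ (all of $g^0_s$, $g_s^{-1}$ are well defined and vary continuously there, as recalled in Section~\ref{sec:prel}). Note that only the upper bound from Proposition~\ref{prop:c1bound} is used in this step; the lower bound $\iota_t'(\xi_0) \geq 1/(2A)$ will be needed elsewhere (e.g.\ to control the transformed driving function $U(t)$ and the reparametrized time), not here.
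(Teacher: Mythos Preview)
Your proof is correct and follows exactly the approach the paper intends: the paper states that Corollary~\ref{cor:hcap} ``immediately follows from \eqref{eq:transformed} and Proposition~\ref{prop:c1bound},'' and you have simply written out that immediate deduction in full, splitting the integral at $t_1$ and using the upper bound $\iota_t'(\xi(t)) \leq 3A/2$ together with $\zeta<\infty$.
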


\begin{prop} \label{prop:drive}
The driving function $U(t)$ converges as $t \nearrow \zeta$.
\end{prop}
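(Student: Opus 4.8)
The plan is to pass to the limit $t\nearrow\zeta$ in the canonical maps $g_t$ and $g^0_t$, hence in $\iota_t=g^0_t\circ\iota\circ g_t^{-1}$, by means of the kernel theorem (Theorem~\ref{thm:kernel}), and then to read off the convergence of $U(t)=\iota_t(\xi(t))$. Since $\xi\in C([0,\infty);\R)$ and $\zeta<\infty$, the limit $\xi(\zeta):=\lim_{t\nearrow\zeta}\xi(t)$ exists, so it suffices to show that $\iota_t$ converges locally uniformly on a fixed complex neighbourhood $\Omega:=\{z:|z-\xi(\zeta)|<\rho\}$ of $\xi(\zeta)$; then $U(t)=\iota_t(\xi(t))$ converges to the value of the limit at $\xi(\zeta)$. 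Note that by \eqref{eq:contrary} the (reflected) domain of $\iota_t$ contains the disk $\{z:|z-\xi(t)|<r\}$ for every $t<\zeta$, so $\Omega$ lies in the domain of $\iota_t$ for all $t$ close enough to $\zeta$.

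First I would check that $F_\zeta:=\overline{\bigcup_{t<\zeta}F_t}$ is an admissible compact hull for Theorem~\ref{thm:kernel}. Boundedness comes from a flow estimate based on the Koebe-type bound \eqref{eq:inclBMD} used in the proof of Lemma~\ref{lem:condB}: one has $2\pi|\Psi_{\slit(s)}(w,\xi(s))|\le(2|w-\xi(s)|)^{-1}$ whenever $|w-\xi(s)|\ge1$, so if $|z|$ exceeds a constant depending only on $\zeta$ and $\sup_{[0,\zeta]}|\xi|$, then $g_s(z)$ moves at speed $\le\tfrac12$ and never reaches $\xi(s)$ for $s<\zeta$; hence $z\notin F_t$ for every $t<\zeta$. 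That $F_\zeta$ stays away from the obstacles, $\dist(F_\zeta,A_j)>0$ for each $j$, is where hypothesis \eqref{eq:contrary} is used: for $s$ close to $\zeta$ the hull $g_s(F_t\setminus F_s)$ grows from $\xi(s)$ with half-plane capacity $2(t-s)\le2(\zeta-s)$, hence lies in a small neighbourhood of $\xi(s)$, which by \eqref{eq:contrary} is at distance $>r/2$ from the slit $C_j(s)$ of $D_s$; pulling back by $g_s^{-1}$, whose distortion near $\xi(s)$ is controlled by Proposition~\ref{prop:c1bound}, and adding the fixed compact set $F_s$ gives the claim. Consequently the complement may be written as $D^*:=\uhp\setminus(A_0\cup\bigcup_{j=1}^N A_j)$, where $A_0$ is the ``filled'' hull generated by $F_\zeta$, disjoint from the $A_j$.

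Next I would apply Theorem~\ref{thm:kernel} along an arbitrary sequence $t_n\nearrow\zeta$. The domains $\uhp\setminus F_{t_n}$ decrease and therefore converge in the kernel sense to $\uhp\setminus A_0$, while the ranges of $g^0_{t_n}$ are all $\uhp$ and converge trivially; hence $g^0_{t_n}\to g^0$ and $(g^0_{t_n})^{-1}\to(g^0)^{-1}$ locally uniformly, with $g^0$ the canonical map of $A_0$. Likewise $D\setminus F_{t_n}$ converges in the kernel sense to $D^*$, and the ranges $D(\slit(t_n))$ converge in the kernel sense to $D_\zeta$ by Proposition~\ref{prop:slitconv}; hence $g_{t_n}\to g$ and $g_{t_n}^{-1}\to g^{-1}$ locally uniformly, with $g(D^*)=D_\zeta$. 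Since the limits $g^0$, $g$ are the uniquely determined hydrodynamically normalized canonical maps and do not depend on the chosen sequence, we get $g^0_t\to g^0$ and $g_t^{-1}\to g^{-1}$ as $t\nearrow\zeta$. Now take $\rho<r$ and the half-disk $B_0:=\Omega\cap\uhp$; then $B_0\subset D_\zeta$ (because $\xi(\zeta)$ is at distance $\ge r$ from the limiting slits), and for $w\in B_0$ we have $g_t^{-1}(w)\to g^{-1}(w)\in D^*$ followed by $g^0_t\to g^0$ near $g^{-1}(w)$, so $\iota_t(w)=g^0_t(g_t^{-1}(w))\to g^0(g^{-1}(w))$. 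Thus $\iota_t$ converges pointwise on $B_0$.

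Finally I would upgrade this to local uniform convergence on all of $\Omega$ by a normal-families argument: Proposition~\ref{prop:c1bound} together with the Koebe distortion theorem, applied to the univalent maps $\iota_t$ on $\{z:|z-\xi(t)|<r\}$, gives a bound $|\iota_t'|\le C$ on $\Omega$ uniform in $t$; combined with the pointwise convergence on $B_0$ this makes $\{\iota_t\}$ locally uniformly bounded on $\Omega$ and convergent on a subset with an accumulation point, so Vitali's theorem yields local uniform convergence on $\Omega$. Since $\xi(t)\to\xi(\zeta)\in\Omega$, it follows that $U(t)=\iota_t(\xi(t))$ converges, as claimed. The main obstacle is the preparatory step of showing that $F_\zeta$ (equivalently $A_0$) really is a compact hull with connected, simply connected complement in $\uhp$, disjoint from the $A_j$ — i.e.\ that the complement of the growing hull does not degenerate topologically as $t\nearrow\zeta$, which is precisely where \eqref{eq:contrary} is indispensable. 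A secondary subtlety is that $g^0_t$ must be evaluated at the moving tip $g_t^{-1}(\xi(t))\in\partial F_\zeta$, on the boundary of the domain of the limit $g^0$; this is why the convergence of $\iota_t$ is extracted through the interior half-disk $B_0$ and Vitali's theorem rather than by composing the limits of $g^0_t$ and $g_t^{-1}$ directly.
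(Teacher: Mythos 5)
Your approach is genuinely different from the paper's and, as written, has a gap in its crucial preparatory step.

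The paper never tries to pass to the limit in $\iota_t$ via the kernel theorem at this stage. Instead it approximates $\xi$ by a $C^1$ function $\xi^{\varepsilon}$, computes $\partial_t\iota_t(z)$ explicitly from the Komatu--Loewner and Loewner equations, decomposes it into a removable-singularity part $\Theta_t(z)$ and a BMD kernel part $2\pi\iota_t'(z)\mathbf{H}_{\slit(t)}(z,\xi(t))$, bounds each uniformly in $t$ using Proposition~\ref{prop:c1bound}, Koebe's one-quarter theorem, the distortion theorem, and the maximum principle, and then integrates in $t$; a final $3A\varepsilon$ estimate converts the convergence of $\iota_t(\xi^\varepsilon(t))$ into the convergence of $U(t)$. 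Kernel convergence is deliberately reserved for the \emph{subsequent} Propositions~\ref{prop:nonintersect} and \ref{prop:nondegeneracy}, both of which use Proposition~\ref{prop:drive} as an input (the limit $\check{U}(\check{\zeta})$ and the extension of the $\uhp$-Loewner chain to time $\check{\zeta}$ presuppose that $U(t)$ converges).

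The gap in your proposal sits exactly here. Your step ``$F_{\zeta}$ is a compact hull disjoint from the $A_j$'' is essentially the content of Proposition~\ref{prop:nonintersect}, but you try to establish it \emph{before} knowing that $U(t)$ converges, by arguing that $g_s(F_t\setminus F_s)$ has small half-plane capacity $2(t-s)$ and ``hence lies in a small neighbourhood of $\xi(s)$.'' That implication is false in general: bounded or small half-plane capacity does not control the diameter of a hull (a long, thin hull hugging $\partial\uhp$ has negligible capacity). The standard Loewner diameter bound in $\uhp$ controls $\operatorname{diam} F$ by the capacity \emph{together with} the oscillation of the driving function; in your setting the relevant $\uhp$-driving function is $U$ itself, so invoking that bound to control the hull is circular, since controlling the oscillation of $U$ near $\zeta$ is precisely what Proposition~\ref{prop:drive} asserts. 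Working instead in the slit domain with driving function $\xi$ (which \emph{is} uniformly continuous on $[0,\zeta]$) would require transferring the $\uhp$ diameter estimate to a flow governed by $\Psi_{\slit(u)}$, which only resembles $\Psi_{\uhp}$ when the hull is known to stay away from the slits --- again what you are trying to prove. A bootstrap might close this loop, but it is a genuinely nontrivial argument and nothing in your sketch supplies it; nor does Proposition~\ref{prop:c1bound} directly control the distortion of $g_s^{-1}$ near $\xi(s)$ (it controls $\iota_t'=(g^0_t)'\cdot(\iota\circ g_t^{-1})'$, a product, not $(g_s^{-1})'$ alone). Once the compactness and disjointness of $A_0=F_\zeta$ from the slits is granted, your steps 2--5 (kernel theorem applied in the direction (ii)$\Rightarrow$(i) to $g_{t_n}$ and to $g^0_{t_n}$, pointwise convergence of $\iota_t$ on the half-disk $B_0$, and Vitali combined with the uniform $|\iota_t'|$ bound from Proposition~\ref{prop:c1bound} and the distortion theorem) are sound and would indeed yield the convergence of $U(t)=\iota_t(\xi(t))$; but the logical order you propose inverts the paper's, and the inversion is not justified as written.
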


Corollary~\ref{cor:hcap} immediately follows
from \eqref{eq:transformed} and Proposition~\ref{prop:c1bound}.
The proof of Propositions~\ref{prop:c1bound} and \ref{prop:drive}
is postponed to Sections~\ref{subsec:claim_hcap} and \ref{subsec:claim_drive}.

We now put
\[
\check{g}^0_t:=g^0_{(a^0)^{-1}(2t)},\quad \check{F}_t:=F_{(a^0)^{-1}(2t)}
\quad \text{and} \quad \check{U}(t):=U((a^0)^{-1}(2t))
\]
for $0 \leq t < \check{\zeta}:=a^0_{\zeta-}/2<\infty$.
By this time-change, the equation~\eqref{eq:gLo} is reduced
to the usual Loewner equation \eqref{eq:Lo}, and
the evolution $(\check{g}^0_t, \check{F}_t)_{t<\check{\zeta}}$
is now produced by \eqref{eq:Lo} driven by $\check{U}$.
Since the driving function $\check{U}$ can be extended continuously
to the interval $[0, \infty)$ by Proposition~\ref{prop:drive},
we can extend $(\check{g}^0_t, \check{F}_t)_{t \in [0, \check{\zeta})}$
continuously to $[0, \infty)$
by solving \eqref{eq:Lo} driven by $\check{U}$ so extended.

\begin{prop} \label{prop:nonintersect}
Under the assumption~\eqref{eq:contrary},
the inclusion $\check{F}_{\check{\zeta}} \subset D$ holds.
In particular, the image
$\check{g}^0_{\check{\zeta}}(D \setminus \check{F}_{\check{\zeta}})
=\uhp \setminus \bigcup_{j=1}^N \check{g}^0_{\check{\zeta}}(C_j)$
is a non-degenerate $(N+1)$-connected domain.
(`Non-degenerate' means that none of the boundary components of
$\check{g}^0_{\check{\zeta}}(D \setminus \check{F}_{\check{\zeta}})$ is a singleton.)
\end{prop}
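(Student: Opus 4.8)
The plan is to prove directly that each fixed slit $C_j$ of $D$ never enters $\check F_{\check\zeta}$, by following the orbit $\check g^0_s(z)$ of a point $z \in C_j$ under the time-changed, continuously extended Loewner flow and showing that it cannot collide with the driving point $\check U(s)$ on $[0, \check\zeta)$. For $s \in [0, \check\zeta)$ set $t := (a^0)^{-1}(2s) \in [0, \zeta)$, so that $\check g^0_s = g^0_t$ and $\check U(s) = U(t) = \iota_t(\xi(t))$ by \eqref{eq:transformed}. Because $F_t \subset D$ for every $t < \zeta$, we have $C_j \cap F_t = \emptyset$, so $\check g^0_s$ is defined on a neighbourhood of $C_j$; moreover, from $\iota_t = g^0_t \circ \iota \circ g_t^{-1}$ one reads off $\check g^0_s(z') = g^0_t(z') = \iota_t(g_t(z'))$ for $z' \in D \setminus F_t$, so that $\check g^0_s(z') - \check U(s) = \iota_t(g_t(z')) - \iota_t(\xi(t))$.

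The heart of the matter is to transfer the separation supplied by \eqref{eq:contrary}, which lives in the moving domain $D(\slit(t))$, into a separation in $\uhp$ that is uniform in $t$. Fix $j$ and $z \in C_j$; since $g_t$ maps $C_j$ onto $C_j(\slit(t))$, for $z' \in D \setminus F_t$ sufficiently close to $z$ we have $|g_t(z') - \xi(t)| > r$ by \eqref{eq:contrary}. The map $\iota_t$ is univalent on $D(\slit(t)) \cup \Pi D(\slit(t)) \cup \partial\uhp$, and this set contains the disc $B(\xi(t), r)$ because $\dist(\xi(t), C_k(\slit(t))) \ge R(\xi(t), \slit(t)) > r$ for every $k$; also, by Proposition~\ref{prop:c1bound} there are $t_1 < \zeta$ and $A > 1$ with $\iota_t'(\xi(t)) \ge 1/(2A)$ for $t \in [t_1, \zeta)$. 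Applying Koebe's one-quarter theorem \eqref{eq:Koebe} to the normalisation of $w \mapsto \iota_t(\xi(t) + rw)$ on $\disk$ yields $\iota_t(B(\xi(t), r)) \supset B\!\left(\iota_t(\xi(t)), \tfrac{r}{4}|\iota_t'(\xi(t))|\right) \supset B\!\left(\iota_t(\xi(t)), \tfrac{r}{8A}\right)$; since $g_t(z') \notin B(\xi(t), r)$ and $\iota_t$ is injective, $\iota_t(g_t(z')) \notin B\!\left(\iota_t(\xi(t)), \tfrac{r}{8A}\right)$. Letting $z' \to z$ and using the identities of the previous paragraph gives $|\check g^0_s(z) - \check U(s)| \ge r/(8A)$ for all $z \in C_j$ and all $s \in [s_1, \check\zeta)$, where $s_1 := a^0_{t_1}/2$. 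On the complementary compact range $[0, s_1]$ the function $(s, z) \mapsto |\check g^0_s(z) - \check U(s)|$ is jointly continuous and strictly positive on $[0, s_1] \times C_j$ (here one uses $C_j \cap \check F_{s_1} = C_j \cap F_{t_1} = \emptyset$), hence bounded below by a positive constant there as well. Altogether $m_j := \inf_{s < \check\zeta} \min_{z \in C_j} |\check g^0_s(z) - \check U(s)| > 0$.

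The conclusion now follows from the Loewner equation \eqref{eq:Lo}: for $z \in C_j$, $\bigl|\tfrac{d}{ds}\check g^0_s(z)\bigr| = 2/|\check g^0_s(z) - \check U(s)| \le 2/m_j$ on $[0, \check\zeta)$, so $\check g^0_s(z)$ extends continuously to $s = \check\zeta$ with $|\check g^0_{\check\zeta}(z) - \check U(\check\zeta)| \ge m_j > 0$ (using the continuous extension of $\check U$ from Proposition~\ref{prop:drive}); thus the solution through $z$ does not explode by time $\check\zeta$, i.e. $z \notin \check F_{\check\zeta}$. As $j$ and $z \in C_j$ are arbitrary, $\check F_{\check\zeta} \cap C_j = \emptyset$ for every $j$, that is $\check F_{\check\zeta} \subset \uhp \setminus \bigcup_{j} C_j = D$. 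The ``in particular'' part is then immediate: $\check g^0_{\check\zeta}$ is a conformal bijection of $\uhp \setminus \check F_{\check\zeta}$ onto $\uhp$ which, by what we have just shown, is holomorphic and injective on a neighbourhood of each non-degenerate continuum $C_j \subset \uhp$, so each $\check g^0_{\check\zeta}(C_j)$ is a non-degenerate continuum, these are pairwise disjoint, and $\check g^0_{\check\zeta}(D \setminus \check F_{\check\zeta}) = \check g^0_{\check\zeta}\bigl((\uhp \setminus \check F_{\check\zeta}) \setminus \bigcup_j C_j\bigr) = \uhp \setminus \bigcup_j \check g^0_{\check\zeta}(C_j)$, which is a non-degenerate $(N+1)$-connected domain.

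The step I expect to be the main obstacle is the uniform separation estimate in the second paragraph. It is crucial that $\iota_t$ be univalent on a disc about $\xi(t)$ whose radius $r$ is independent of $t$ — this is precisely where the standing assumption \eqref{eq:contrary} enters — and that $\iota_t'(\xi(t))$ be bounded below uniformly in $t$, which is exactly the content of Proposition~\ref{prop:c1bound} (supplemented by a compactness argument on the initial time interval). Without the uniform derivative bound, Koebe's theorem would produce only a $t$-dependent lower bound liable to degenerate as $t \nearrow \zeta$, and the argument would break down; once that bound is in hand, the remaining steps are soft.
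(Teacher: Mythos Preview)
Your argument is correct and follows essentially the same route as the paper: normalise $\iota_t$ on the disc $B(\xi(t),r)$ furnished by \eqref{eq:contrary}, apply Koebe's one-quarter theorem together with the derivative bound of Proposition~\ref{prop:c1bound} to obtain $\dist(g^0_t(C_j),U(t))\ge r/(8A)$, and conclude $C_j\cap\check F_{\check\zeta}=\emptyset$. The only differences are cosmetic: you treat the initial interval $[0,s_1]$ by a separate compactness argument (the paper tacitly absorbs this into the constant), and you spell out the non-explosion via the Loewner ODE where the paper simply cites \cite[Section~2.1]{LSW01}.
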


\begin{proof}
For $t \in [0, \zeta)$, we set
\begin{equation} \label{eq:iota_norm}
h^1_t(z):=\frac{\iota_t(rz+\xi(t))-\iota_t(\xi(t))}{r\iota_t'(\xi(t))}
=\frac{\iota_t(rz+\xi(t))-U(t)}{r\iota_t'(\xi(t))}.
\end{equation}
The function $h^1_t(z)$ is univalent and defined on a domain containing $\disk$
by the assumption~\eqref{eq:contrary}.
Thus, it belongs to $S$ by definition,
and Koebe's one-quarter theorem~\eqref{eq:Koebe} implies that
$\lvert \iota_t(rz+\xi(t))-U(t)\rvert\geq r\iota_t'(\xi(t))/4$ for $z\notin\disk$.
Combining this inequality with \eqref{eq:contrary} and Proposition~\ref{prop:c1bound}, we get
\[
\min_{1 \leq j \leq N}\dist(g^0_t(C_j), U(t)) \geq \frac{r}{8A}
\]
for $t < \zeta$.
By passing to the limit as $t \nearrow \zeta$, we have
\[
\min_{1 \leq j \leq N}\dist(\check{g}^0_{\check{\zeta}}(C_j),
\check{U}(\check{\zeta}))>0,
\]
which yields $\check{F}_{\check{\zeta}} \cap \bigcup_{j=1}^N C_j = \emptyset$
in view of \cite[Section~2.1]{LSW01}.
(See also \cite[Theorem~5.5~(i)]{CF18}, which we have already referred to
in Section~\ref{sec:prel}.)
$D \setminus \check{F}_{\check{\zeta}}$ is thus a non-degenerate
$(N+1)$-connected domain.
Since the non-degeneracy of a finitely multiply connected domain
is preserved under conformal maps (cf.\ \cite[Exercise~15.2.1]{Co95}),
the proposition follows.
\end{proof}

\begin{prop} \label{prop:nondegeneracy}
The slit domain $D_{\zeta}=\uhp \setminus \bigcup_{j=1}^{N}C_{j, \zeta}$
is non-degenerate and $(N+1)$-connected.
\end{prop}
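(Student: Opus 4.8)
The plan is to transport the non-degeneracy and $(N+1)$-connectivity of the source domain $D\setminus\check F_{\check\zeta}$ — which the proof of Proposition~\ref{prop:nonintersect} already supplies — to $D_\zeta$ along the limit of the canonical maps $g_t\colon D\setminus F_t\to D_t$ as $t\nearrow\zeta$. First I would observe that, since $\{F_t\}$ is increasing and $\hcap^D(F_t)=a_t$ stays bounded as $t\nearrow\zeta$ by Corollary~\ref{cor:hcap}, the closure $\check F_{\check\zeta}$ of $\bigcup_{t<\zeta}F_t$ is a hull in $D$ and the decreasing family $\{D\setminus F_t\}_{t<\zeta}$ converges to $D\setminus\check F_{\check\zeta}$ in the sense of kernel convergence. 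The domain $D\setminus\check F_{\check\zeta}=\uhp\setminus(\check F_{\check\zeta}\cup\bigcup_{j=1}^N C_j)$ is obtained from $\uhp$ by removing the hull $\check F_{\check\zeta}$ together with the $N$ fixed honest horizontal slits $C_j$, which are mutually disjoint and disjoint from $\check F_{\check\zeta}$ by Proposition~\ref{prop:nonintersect}; hence it is non-degenerate and $(N+1)$-connected, and it has exactly the shape required in Theorem~\ref{thm:kernel} with $A_0=\check F_{\check\zeta}$ and $A_j=C_j$ for $1\le j\le N$.

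Next I would apply Theorem~\ref{thm:kernel} to the sequence $g_t\colon D\setminus F_t\to D_t$. Assumptions (K.1)--(K.3) hold: (K.1) because $\check F_{\check\zeta}$ and the $C_j$ are bounded, so every $D\setminus F_t$ contains a fixed $\uhp\cap\Delta(0,L)$; (K.2) by the hydrodynamic normalization of the canonical maps; and (K.3) because, after Schwarz reflection, $g_t$ is real-valued on the part of $\partial\uhp$ lying outside a fixed bounded set. Moreover condition~\eqref{cond:ranconv} of Theorem~\ref{thm:kernel} is at hand, since Proposition~\ref{prop:slitconv} gives $\slit(t)\to\slit(\zeta)\in\overline{\Slit}$ and therefore $D_t=D(\slit(t))\to D_\zeta$ in the sense of kernel convergence. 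Theorem~\ref{thm:kernel} then yields a univalent map $g_\zeta$ on $D\setminus\check F_{\check\zeta}$ with $g_t\to g_\zeta$ locally uniformly, and, by its concluding assertion, $D_\zeta=g_\zeta(D\setminus\check F_{\check\zeta})$. Since both connectivity and non-degeneracy of a finitely connected domain are preserved by conformal maps (cf.\ \cite[Exercise~15.2.1]{Co95}, exactly as in the proof of Proposition~\ref{prop:nonintersect}), it follows that $D_\zeta$ is non-degenerate and $(N+1)$-connected; in particular $\Im z_j(\zeta)>0$ and $z_j(\zeta)\ne z^r_j(\zeta)$ for each $j$ and the $C_{j,\zeta}$ are mutually disjoint, i.e.\ $\slit(\zeta)\in\Slit$.

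I expect the only delicate point to be the bookkeeping in the first step: verifying that $\check F_{\check\zeta}$ is a genuine hull, relatively closed in $\uhp$, and that $\{D\setminus F_t\}_{t<\zeta}$ kernel-converges to $D\setminus\check F_{\check\zeta}$, which amounts to matching the extended Loewner evolution $\{\check F_t\}_{t\ge0}$ from Section~\ref{subsec:outline} with the original growing hulls $\{F_t\}_{t<\zeta}$; this is where Corollary~\ref{cor:hcap} and the standard continuity properties of Loewner evolutions come in. After that, the argument is a clean application of the kernel theorem, the key structural point being that Theorem~\ref{thm:kernel} constrains only the \emph{source} domains $D\setminus F_t$, whose slits $C_j$ are fixed and honest, so that no a priori knowledge of the possibly degenerate limit slits $C_{j,\zeta}$ is required.
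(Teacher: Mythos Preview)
Your approach is correct and in fact slightly more direct than the paper's. Both arguments rest on the same ingredients --- Proposition~\ref{prop:nonintersect}, the kernel convergence of $D\setminus\check F_t$ to $D\setminus\check F_{\check\zeta}$ (which the paper justifies via \cite[Proposition~4.7]{Mu18}), and Proposition~\ref{prop:slitconv} for $D_t\to D_\zeta$ --- and both conclude by transporting non-degeneracy along a conformal map. The difference is structural: the paper factors $g_t=\iota_t^{-1}\circ g^0_t$ and applies Theorem~\ref{thm:kernel} twice, first in the direction \eqref{cond:fnconv}$\Rightarrow$\eqref{cond:ranconv} to $\check g^0_t$ (obtaining $\check D^0_t\to\check D^0_{\check\zeta}$), then in the direction \eqref{cond:ranconv}$\Rightarrow$\eqref{cond:fnconv} to $\iota_t^{-1}$ (obtaining a conformal map $\check D^0_{\check\zeta}\to D_\zeta$). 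You instead apply \eqref{cond:ranconv}$\Rightarrow$\eqref{cond:fnconv} once, directly to $g_t$, exploiting that Theorem~\ref{thm:kernel} allows a nontrivial hull $A_0=\check F_{\check\zeta}$ in the limit source domain. Your route saves a step; the paper's route has the minor advantage that the source domains $\check D^0_t$ for the second application carry no hull component, so the hypothesis check is marginally cleaner.

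One small correction: your phrase ``the closure $\check F_{\check\zeta}$ of $\bigcup_{t<\zeta}F_t$'' is not quite accurate, since $\check F_{\check\zeta}$ is defined via the extended Loewner equation and can strictly contain the relative closure of $\bigcup_{t<\zeta}F_t$ in $\uhp$ (points swallowed exactly at time $\check\zeta$). You correctly flag this as the delicate bookkeeping point; what is actually needed, and what holds, is the kernel convergence $D\setminus\check F_t\to D\setminus\check F_{\check\zeta}$, which follows from the continuity of the extended Loewner hulls. Also, the boundedness of $\hcap^D(F_t)=2t\le 2\zeta$ is immediate from $\zeta<\infty$ and does not require Corollary~\ref{cor:hcap}; that corollary concerns $\hcap^{\uhp}(F_t)$ and is what ensures the extended Loewner chain exists so that $\check F_{\check\zeta}$ is defined at all.
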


\begin{proof}
We consider the two families of domains
\[
\check{D}_t:=D_{(a^0)^{-1}(2t)},\; t<\check{\zeta}
\quad \text{and} \quad
\check{D}^0_t:=\check{g}^0_t(D \setminus \check{F}_{t}),
\; t \leq \check{\zeta}.
\]
We have seen just after Proposition~\ref{prop:slitconv}
that the former family converges to
$D_{\zeta}$ as $t \nearrow \check{\zeta}$ in the sense of kernel convergence.
On the other hand, we can observe that
the latter one converges to $\check{D}^0_{\check{\zeta}}$ as follows:
$\check{g}^0_t$ converges to $\check{g}^0_{\check{\zeta}}$ locally uniformly
on $\uhp\setminus \check{F}_{\check{\zeta}}$ as $t\nearrow\check{\zeta}$,
since $\check{g}^0_t(z)$ is jointly continuous
in $(t,z)\in\bigcup_{s\in[0,\infty)}\{s\}\times(\uhp\setminus\check{F}_s)$
by a general theory of ODEs. In particular, the same convergence occurs
on a smaller domain $D\setminus\check{F}_{\check{\zeta}}$.
Moreover, $D\setminus\check{F}_t$ converges
to $D\setminus\check{F}_{\check{\zeta}}$ in the sense of kernel convergence
as $t\nearrow\check{\zeta}$, because the continuity of the hulls $\{\check{F}_t\}_t$
in $D$ is inherited from that in $\uhp$ (cf. \cite[Proposition~4.7]{Mu18}).
Thus, it follows from the implication
$\eqref{cond:fnconv}\Rightarrow\eqref{cond:ranconv}$ of Theorem~\ref{thm:kernel}
that $\check{D}^0_t$ converges to $\check{D}^0_{\check{\zeta}}$.

We now apply the implication
$\eqref{cond:ranconv}\Rightarrow\eqref{cond:fnconv}$ of Theorem~\ref{thm:kernel}
to the mappings $\iota_t^{-1}\colon\check{D}^0_t\to\check{D}_t$, $t<\zeta$.
Then we see that there exists a conformal map
$\iota_{\zeta}^{-1}\colon\check{D}^0_{\check{\zeta}}\to D_{\zeta}$,
which proves the proposition due to Proposition~\ref{prop:nonintersect}.
\end{proof}

The claim of Proposition~\ref{prop:nondegeneracy} is equivalent to
$\slit(\zeta) \in \Slit$, as was to be proven.

\subsection{Proof of Proposition~\ref{prop:c1bound}}
\label{subsec:claim_hcap}

The aim of this subsection is to prove Proposition~\ref{prop:c1bound}
under the assumption~\eqref{eq:contrary}.
By Proposition~\ref{prop:slitconv}, there is a constant $L>0$ so that
$\xi([0,\zeta]) \cup \bigcup_{t \in [0, \zeta]}\bigcup_{j=1}^N C_{j,t} \subset B(0,L)$,
where $C_{j, t}:=C_j(\slit(t))$.
Since the conformal map $\iota_t$ is the composite of three maps
hydrodynamically normalized, it satisfies
\[
\iota_{t}(z) = z + \frac{c_{t}}{z} + o(z^{-1})\; (z \to \infty),\quad z \in \Delta(0, L),
\]
for some constant $c_{t}$.
We define a normalized function $f_t$ on $\disk^*:=\Delta(0,1)$
by $f_t(z):=L^{-1}\iota_t(Lz)$.
The function $f_t$ is an element of the set
\[
\Sigma:=\{f \colon \disk^* \to \C;
\text{$f$ is univalent}, f(\infty)=\infty\; \text{and}\; \res(f, \infty)=1\}.
\]
Hence we have $\C \setminus f_t(\disk^*) \subset \overline{B(0,2)}$
by \cite[Lemma~3.5]{Mu18}.
In terms of $\iota_t$, this means that
\begin{equation} \label{eq:locbdd}
\iota_t(B(0, L)) \subset \C \setminus \iota_t(\Delta(0, L))
\subset \overline{B(0,2L)}.
\end{equation}

If $D_t$ had no slits,
then the boundedness of $\iota_t'(\xi(t))$ would follow from \eqref{eq:locbdd}
combined with elementary tools in complex analysis such as Schwarz's lemma.
These tools, however, do not work on multiply connected domains.
For this reason, we employ the boundary Harnack principle instead:

\begin{prop}[{\cite[Theorem~8.7.14]{AG01}}] \label{prop:BHP}
Let $G \subset \R^2$ be a bounded Lipschitz domain,
$V \subset \R^2$ be an open set,
$K$ be a compact subset of $V$, and $z_0 \in G$.
Then there exists a constant $A>1$ such that,
for any two harmonic functions $h_1$ and $h_2$ on $G$
taking value zero on $V \cap \partial G$, it holds that
\[
\frac{h_1(x)}{h_2(x)}\frac{h_2(z_0)}{h_1(z_0)} \leq A, \quad x \in K \cap G.
\]
\end{prop}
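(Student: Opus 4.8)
\emph{Proof plan.}
The assertion is a classical theorem of potential theory, quoted here from \cite[Theorem~8.7.14]{AG01}, so in the body of the paper we simply invoke it; what follows only sketches how I would establish it from scratch. The starting observation is that a bounded Lipschitz domain $G$ is a \emph{uniform} (equivalently, \emph{NTA}) domain, and hence satisfies, with constants depending only on $G$, both an interior corkscrew condition and a Harnack chain condition. The former asserts that for every $Q\in\partial G$ and every sufficiently small $r>0$ there is a point $A_r(Q)\in G$ with $\lvert A_r(Q)-Q\rvert<r$ and $\dist\bigl(A_r(Q),\partial G\bigr)>c_0r$; the latter, that any two points of $G$ lying at distance more than $\varepsilon$ from $\partial G$ and within distance $C\varepsilon$ of one another can be joined inside $G$ by a chain of balls of radii comparable to $\varepsilon$ whose number is bounded in terms of $C$ alone. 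Via Harnack's inequality, the second property yields $h(x)\asymp h(y)$, with implicit constant depending only on $C$, for every positive harmonic function $h$ on $G$ and every such pair $x,y$. First I would reduce to the case that $h_1$ and $h_2$ are strictly positive on $G$ (a nonnegative harmonic function that is not identically zero is positive by the minimum principle) and normalize $h_i(z_0)=1$, so that it remains to bound $h_1(x)\le A\,h_2(x)$ for $x\in K\cap G$.

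The analytic engine is the \emph{Carleson estimate}: there is a constant $C_1$ depending only on $G$ such that whenever $u\ge 0$ is harmonic in $G\cap B(Q,2r)$ and vanishes continuously on $\partial G\cap B(Q,2r)$, one has $u(x)\le C_1\,u\bigl(A_r(Q)\bigr)$ for all $x\in G\cap B(Q,r)$. I would prove this by a maximum-principle ``box'' argument: the Lipschitz cone condition at $Q$ permits placing a subharmonic barrier below $u$ near $\partial G\cap B(Q,r)$, which bounds from above the harmonic measure of the outer sphere $G\cap\partial B(Q,2r)$ as seen from boundary-near points, and iterating this across dyadic annuli propagates the bound $u\le C_1\,u(A_r(Q))$ all the way to the boundary. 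With Carleson's estimate in hand, the corkscrew and Harnack chain conditions upgrade it to the two-sided comparison, valid for $Q\in\partial G\cap V$ and $2r$ small enough that $B(Q,2r)\cap\partial G\subset V$,
\[
\frac{h_1(x)}{h_2(x)}\ \asymp\ \frac{h_1\bigl(A_r(Q)\bigr)}{h_2\bigl(A_r(Q)\bigr)},\qquad x\in G\cap B(Q,r/2).
\]
The upper estimate for $h_1$ here is exactly Carleson's inequality, while the matching lower estimate for $h_2$ follows by comparing $h_2$, through the maximum principle, with a multiple of the harmonic measure of $\partial G\cap B(Q,2r)$ --- equivalently, with the Green function of $G$ with pole at a corkscrew point --- the comparability of those two objects being itself a consequence of Carleson's estimate.

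It then remains to globalize. Since $K\subset V$ is compact and $V$ open, there is $\rho_0>0$ with $B(x,\rho_0)\subset V$ for all $x\in K$; I would cover the (compact) set of points of $K\cap G$ lying within $\rho_0/2$ of $\partial G$ by finitely many balls $B(Q_i,r_i/2)$ with $Q_i\in\partial G\cap V$ and $2r_i<\rho_0$, so that on each $G\cap B(Q_i,r_i/2)$ the ratio $h_1/h_2$ is comparable, uniformly in $h_1,h_2$, to its value at the fixed point $A_{r_i}(Q_i)$. On the remaining part of $K\cap G$, which stays at a positive distance from $\partial G$, the ratio is controlled directly by Harnack's inequality. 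Finally, $G$ being bounded and connected, a single finite Harnack chain joins $z_0$ to every $A_{r_i}(Q_i)$ and to the interior piece of $K$, so all these local comparisons combine into one constant $A>1$ depending only on $G$, $V$, $K$ and $z_0$. The step I expect to be the genuine obstacle is not the covering but the passage from the one-sided Carleson estimate to the two-sided ratio estimate --- that is, the lower bound for $h_2$ near $\partial G$ --- which requires the precise interplay of positive harmonic functions vanishing on a boundary portion, harmonic measure, and the Green function, and which is the technical core of the Dahlberg--Wu--Jerison--Kenig circle of ideas behind \cite[Theorem~8.7.14]{AG01}.
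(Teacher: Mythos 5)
This proposition is not proved in the paper at all: it is quoted verbatim as a known result from Armitage--Gardiner \cite[Theorem~8.7.14]{AG01}, and the paper's only ``proof'' is that citation. Your sketch is therefore not competing with an argument in the paper but with the textbook literature, and in outline it is the standard one: Lipschitz implies NTA (corkscrew plus Harnack chains), the Carleson estimate via barriers and dyadic iteration, the lower bound for $h_2$ through comparison with harmonic measure and the Green function with pole at a corkscrew point (the Caffarelli--Fabes--Mortola--Salsa / Dahlberg--Jerison--Kenig circle of ideas you name), and then a finite covering of $K\cap G$ near $\partial G$ by boundary balls contained in $V$ together with interior Harnack chains to a single base point $z_0$. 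That is essentially the classical route behind the cited theorem, and your covering step correctly ensures that boundary points reached from $K\cap G$ lie in $V\cap\partial G$. One caveat you should make explicit: as stated (both in the paper and in your reduction), the functions must be \emph{positive}; the boundary Harnack principle fails for sign-changing harmonic functions, and your appeal to the minimum principle only upgrades ``nonnegative, not identically zero'' to ``positive''---it does not reduce the general case to the positive one. In \cite{AG01} positivity is part of the hypothesis, and in the paper's application $h_1(z)=\Im\iota_t(z)$ and $h_2(z)=\Im z$ are indeed positive, so nothing breaks downstream, but your write-up should state the positivity hypothesis rather than suggest it can be dispensed with. With that correction, your plan is a faithful sketch of the known proof; given that the paper (reasonably) treats the result as a black box, reproducing it in full would be a substantial detour into boundary potential theory rather than a gap being filled.
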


\begin{figure}
\centering
\includegraphics[width=0.9\columnwidth]{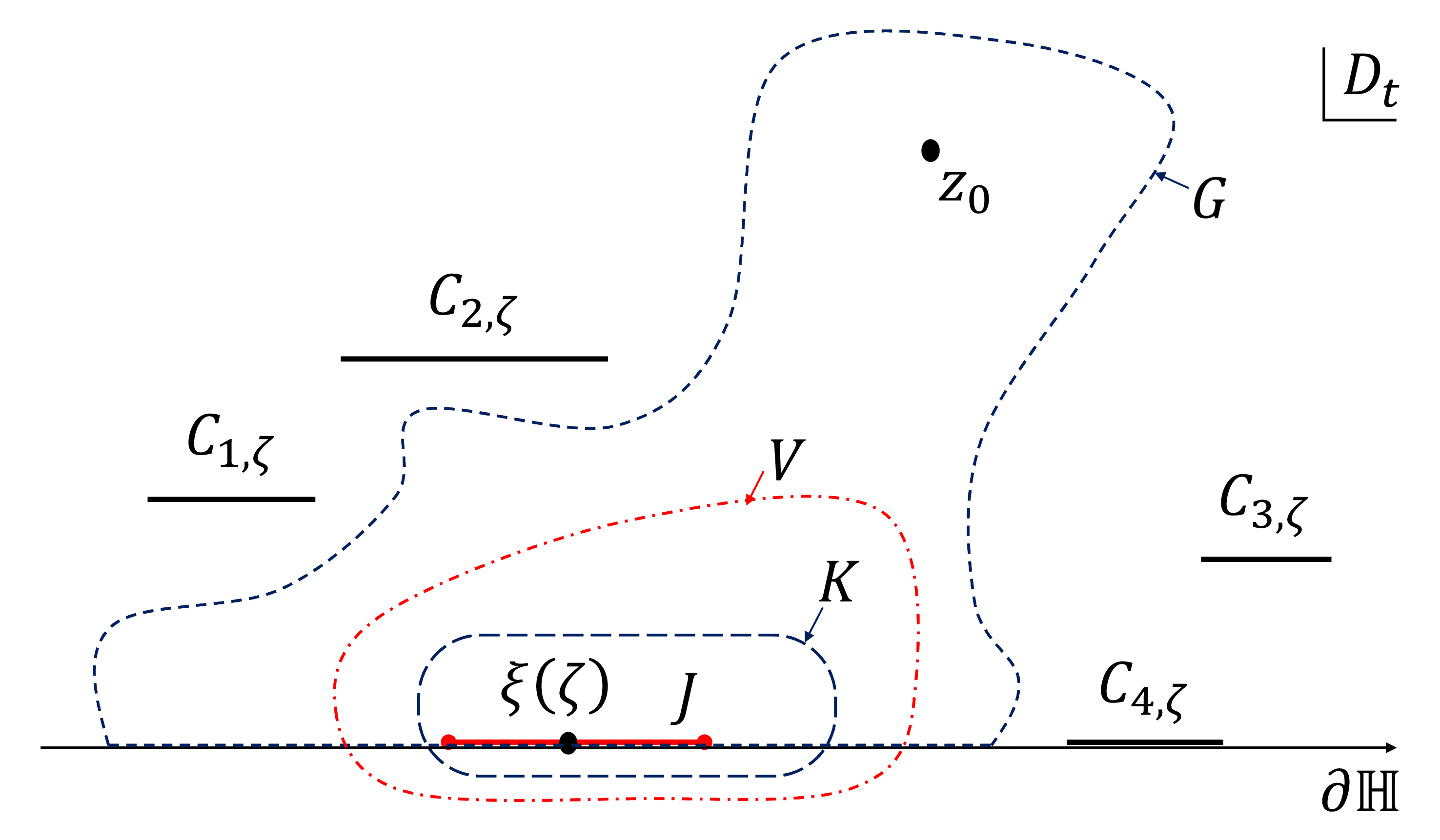}
\caption{The way to choose $J$, $K$, $V$, $G$ and $z_0$}
\label{fig:BHP}
\end{figure}

We shall apply this proposition to the harmonic functions
$h_1(z)=\Im \iota_t(z)$ and $h_2(z)=\Im z$.
The sets $G$, $V$, $K$ and point $z_0$ in the assumption are chosen as follows
(see Figure~\ref{fig:BHP}):
By the assumption \eqref{eq:contrary} and Proposition~\ref{prop:slitconv},
there exist a constant $t_1 \in [0, \zeta)$ and
finite open subinterval $J$ of $\partial \uhp$ such that
\[
\xi([t_1, \zeta]) \subset J \quad \text{and} \quad
\overline{J} \cap \bigcup_{t \in [t_1, \zeta]} C_{j, t} = \emptyset.
\]
For this interval $J$, there exist a relatively compact open set $O$ and
an open set $V$ such that
\[
J \subset O \subset \overline{O} \subset V \quad \text{and} \quad
\overline{V} \cap \bigcup_{t \in [t_1, \zeta]} C_{j, t}=\emptyset.
\]
For this set $V$ and an arbitrary fixed point $z_0 \in D$ with $\Im z_0 \geq 6L$,
we can take a bounded domain $G \subset D$ with smooth boundary
so that
\[
z_0 \in G,\quad V \cap \partial \uhp \subset \partial G \quad \text{and}
\quad G \cap \bigcup_{t \in [t_1, \zeta]} C_{j, t}=\emptyset.
\]
Now we apply Proposition~\ref{prop:BHP} to $h_1$ and $h_2$
with $G$, $V$, $K:=\overline{O}$ and $z_0$ chosen in this way
to obtain
\begin{equation} \label{eq:z0BHP}
A^{-1} \leq \frac{\Im \iota_t(z)}{\Im z}\frac{\Im z_{0}}{\Im \iota_t(z_{0})} \leq A,
\quad z \in G \cap K,\ t \in [t_1, \zeta),
\end{equation}
for a constant $A > 1$ independent of $z$ and $t$.

On the other hand, we can observe from \eqref{eq:locbdd} that
\begin{align*}
\left\lvert \Im \iota_t(z_0) - \Im z_0 \right\rvert
&= \left\lvert \mean{\uhp}{z_0}{\Im \iota_t(Z^{\uhp}_{\sigma_{\mathcal{C}_t}})
- \Im Z^{\uhp}_{\sigma_{\mathcal{C}_t}};
\sigma_{\mathcal{C}_t} < \infty} \right\rvert \\
&\leq \mean{\uhp}{z_0}{\Im \iota_t(Z^{\uhp}_{\sigma_{\mathcal{C}_t}})
+ \Im Z^{\uhp}_{\sigma_{\mathcal{C}_t}}; \sigma_{\mathcal{C}_t} < \infty} \leq 3L.
\end{align*}
Here, $Z^{\uhp}$ is an absorbing Brownian motion in $\uhp$,
$\sigma_{\mathcal{C}_t}$ is the hitting time of $Z^{\uhp}$
to $\mathcal{C}_t:=\bigcup_{j=1}^N C_{j,t}$, and
$\mathbb{E}^{\uhp}_{z_0}$ stands for the expectation
with respect to $Z^{\uhp}$ starting at $z_0$.
Hence we have
\begin{equation} \label{eq:z0bound}
\left\lvert \frac{\Im \iota_t(z_{0})}{\Im z_{0}} - 1 \right\rvert
\leq \frac{3L}{\Im z_{0}} \leq \frac{1}{2},
\quad \text{i.e.,} \quad
\frac{1}{2} \leq \frac{\Im \iota_{t}(z_{0})}{\Im z_{0}} \leq \frac{3}{2}.
\end{equation}
Substituting \eqref{eq:z0bound} into \eqref{eq:z0BHP} yields that
\begin{equation} \label{eq:c0bound}
\frac{1}{2A} \leq \frac{\Im \iota_t(z)}{\Im z} \leq \frac{3A}{2},
\quad z \in G \cap K,\ t \in [t_1, \zeta).
\end{equation}
Since the function $\iota_t$ is defined across $\partial \uhp$
by Schwarz's reflection,
it is easily checked that $\lim_{z \to \xi_0} \Im \iota_t(z)/\Im z = \iota_t'(\xi_0)$
for $\xi_0 \in \partial \uhp$.
Thus by taking the limit as $z$ goes to $\xi_0 \in J$ in \eqref{eq:c0bound}, we have
\[
\frac{1}{2A} \leq \iota_t'(\xi_0) \leq \frac{3A}{2}, \quad \xi_0 \in J,
\]
which proves Proposition~\ref{prop:c1bound}.

\subsection{Proof of Proposition~\ref{prop:drive}}
\label{subsec:claim_drive}

The aim of this subsection
is to prove Proposition~\ref{prop:drive} under the assumption~\eqref{eq:contrary}.
To this end, we approximate the continuous function $\xi$
by $\xi^{\varepsilon} \in C^{1}[0, \infty)$ so that
\[
\sup_{t \in [t_1, \zeta]}\lvert \xi(t) - \xi^{\varepsilon}(t) \rvert <\varepsilon
\quad \text{and}\quad \{\xi^{\varepsilon}(t); t \in [t_1, \zeta]\} \subset J
\]
hold for $\varepsilon \in (0, r/2)$.
Here, the constant $t_1$ and interval $J$ are
those in Proposition~\ref{prop:c1bound}.

\begin{lem} \label{lem:approx}
$\iota_t(\xi^{\varepsilon}(t))$ converges as $t \nearrow \zeta$
for each fixed $\varepsilon \in (0, r/2)$.
\end{lem}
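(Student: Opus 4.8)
The plan is to show that $t \mapsto \iota_t(\xi^{\varepsilon}(t))$ is Lipschitz on $[t_1, \zeta)$; since $\zeta < \infty$, this forces the limit as $t \nearrow \zeta$ to exist. The starting point is the identity $\iota_t \circ g_t = g^0_t$ on $D \setminus F_t$, which is immediate from $\iota_t = g^0_t \circ \iota \circ g_t^{-1}$ and $\iota$ being the inclusion. Differentiating it in $t$ at a fixed point and inserting \eqref{eq:KLvec} together with the chordal Loewner equation for $g^0_t$ in $\uhp$ in the form $\tfrac{d}{dt}g^0_t = \dot{a}^0_t/(g^0_t - U(t))$ (with $\dot{a}^0_t = 2\iota_t'(\xi(t))^2$ and $U(t) = \iota_t(\xi(t))$; see \eqref{eq:gLo} and \eqref{eq:transformed}) yields the evolution equation
\[
(\partial_t \iota_t)(w) = \frac{2\iota_t'(\xi(t))^2}{\iota_t(w) - U(t)} + 2\pi\, \iota_t'(w)\, \Psi_{\slit(t)}(w, \xi(t)), \qquad w \neq \xi(t),
\]
valid on the reflected domain of $\iota_t$. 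As a harmless preliminary I would first arrange $\xi^{\varepsilon}(t) \neq \xi(t)$ for every $t \in [t_1, \zeta]$, by replacing $\xi^{\varepsilon}$ with $\xi^{\varepsilon} + c$ for a small constant $c$ lying just outside the (short) interval $\{\xi(t) - \xi^{\varepsilon}(t);\, t \in [t_1, \zeta]\}$; this keeps $\sup_{[t_1,\zeta]}|\xi - \xi^{\varepsilon}| < \varepsilon$ and the range inside $J$ if the original approximation is taken slightly finer. Then $(t,w) \mapsto \iota_t(w)$ is jointly $C^1$ near the graph of $\xi^{\varepsilon}$ (smooth dependence of ODE solutions on parameters), so $t \mapsto \iota_t(\xi^{\varepsilon}(t))$ is $C^1$ on $[t_1, \zeta)$ with
\[
\frac{d}{dt}\iota_t(\xi^{\varepsilon}(t)) = (\partial_t \iota_t)(\xi^{\varepsilon}(t)) + \iota_t'(\xi^{\varepsilon}(t))\, \dot{\xi}^{\varepsilon}(t),
\]
and it is enough to bound the right-hand side uniformly over $t \in [t_1, \zeta)$.

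The term $\iota_t'(\xi^{\varepsilon}(t))\dot{\xi}^{\varepsilon}(t)$ is controlled at once: $\iota_t'$ is bounded on $J$ by Proposition~\ref{prop:c1bound} and $\dot{\xi}^{\varepsilon}$ is bounded on the compact interval $[t_1, \zeta]$. For $(\partial_t \iota_t)(\xi^{\varepsilon}(t))$ I would peel off the pole of $\Psi_{\slit(t)}$ with the holomorphic function $\mathbf{H}_{\slit}(z,\xi_0) = \Psi_{\slit}(z,\xi_0) + \tfrac1\pi (z-\xi_0)^{-1}$ recalled in the proof of Lemma~\ref{lem:condB}, obtaining
\[
(\partial_t \iota_t)(\xi^{\varepsilon}(t)) = \left( \frac{2\iota_t'(\xi(t))^2}{\iota_t(\xi^{\varepsilon}(t)) - \iota_t(\xi(t))} - \frac{2\iota_t'(\xi^{\varepsilon}(t))}{\xi^{\varepsilon}(t) - \xi(t)} \right) + 2\pi\, \iota_t'(\xi^{\varepsilon}(t))\, \mathbf{H}_{\slit(t)}(\xi^{\varepsilon}(t), \xi(t)).
\]
The $\mathbf{H}$-term I would bound by re-running the univalent-function estimate from the proof of Lemma~\ref{lem:condB}: by horizontal-translation invariance it equals $2\pi \iota_t'(\xi^{\varepsilon}(t))\, \mathbf{H}_{\slit(t)-\widehat{\xi(t)}}(\xi^{\varepsilon}(t) - \xi(t), 0)$, where $\slit(t) - \widehat{\xi(t)}$ has $R > r$ by the standing assumption \eqref{eq:contrary} and $|\xi^{\varepsilon}(t) - \xi(t)| < \varepsilon < r/2$; writing $\mathbf{H}$ through the function $h \in S$ built from $\Psi_{\slit(t)-\widehat{\xi(t)}}$ as in that proof and applying the growth/distortion estimates for $S$ (cf.\ \cite{Po75, Co95}) on the disk of radius $1/2$ gives $|\mathbf{H}_{\slit(t)}(\xi^{\varepsilon}(t),\xi(t))| = O(1/r)$, which together with the bound on $\iota_t'$ from Proposition~\ref{prop:c1bound} makes this term bounded uniformly in $t$.

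The crux—the step I expect to require the most care—is showing that the two singular contributions in the bracket cancel up to a bounded remainder. On $J$ the map $\iota_t$ is real-valued, strictly increasing and holomorphic, so $\iota_t(\xi^{\varepsilon}(t)) - \iota_t(\xi(t))$ and $\xi^{\varepsilon}(t) - \xi(t)$ have the same sign and comparable size; Taylor-expanding $\iota_t$ around $\xi(t)$ rewrites the bracket as $\tfrac{2\iota_t'(\xi(t))}{\xi^{\varepsilon}(t) - \xi(t)} + O(1) - \tfrac{2\iota_t'(\xi(t))}{\xi^{\varepsilon}(t) - \xi(t)} + O(1)$, where the $O(1)$ errors are controlled by $\sup|\iota_t'|$, $\inf|\iota_t'|$ and $\sup|\iota_t''|$ over the compact subinterval $K_J \Subset J$ containing the ranges of $\xi$ and $\xi^{\varepsilon}$. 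Here $\sup|\iota_t'|$ and $\sup|\iota_t''|$ over $K_J$ are bounded uniformly in $t \in [t_1, \zeta]$ by Cauchy's estimates applied to the uniform bound $\iota_t(B(0,L)) \subset \overline{B(0,2L)}$ of \eqref{eq:locbdd}, which is legitimate because $K_J$ (indeed $\overline{J}$) keeps a positive distance, uniform in $t \in [t_1,\zeta]$, from $\bigcup_{t \in [t_1,\zeta]}\bigcup_j C_{j,t}$—exactly as arranged when $J$ was chosen in Section~\ref{subsec:claim_hcap}—while $\inf|\iota_t'| \geq (2A)^{-1}$ on $J$ is Proposition~\ref{prop:c1bound}. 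Thus the bracket, hence $\tfrac{d}{dt}\iota_t(\xi^{\varepsilon}(t))$, is bounded uniformly over $t \in [t_1,\zeta)$, and the lemma follows. The essential difficulty throughout is that every estimate must be genuinely uniform in $t$, which is precisely where assumption \eqref{eq:contrary} and Propositions~\ref{prop:slitconv} and \ref{prop:c1bound} enter.
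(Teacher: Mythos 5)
Your overall strategy matches the paper's: reduce the problem to bounding the $t$-derivative of $\iota_t(\xi^\varepsilon(t))$, compute $\partial_t\iota_t$ from $\iota_t=g^0_t\circ\iota\circ g_t^{-1}$ and the two Komatu--Loewner flows, and split the result into the bracket $\Theta_t$ and the holomorphic remainder $2\pi\iota_t'\,\mathbf{H}_{\slit(t)}$; your handling of the $\mathbf{H}$-term and of $\iota_t'(\xi^\varepsilon)\dot\xi^\varepsilon$ also coincides with the paper's. The one place you diverge is the bound on $\Theta_t(\xi^\varepsilon(t))$. The paper notes that $\xi(t)$ is a removable singularity of $\Theta_t$, bounds $\Theta_t$ on the circle $\partial B(\xi(t),r/2)$ using \eqref{eq:locbdd} together with Koebe's theorem and the distortion inequality \eqref{eq:distortion} applied to $h^1_t\in S$, and then invokes the maximum principle to bound $\Theta_t$ on the whole disk, hence at $\xi^\varepsilon(t)$. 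You instead Taylor-expand $\iota_t$ about $\xi(t)$, relying on uniform-in-$t$ bounds for $\iota_t'$ and $\iota_t''$ on $J$ obtained from Cauchy's estimates applied to \eqref{eq:locbdd}, and cancel the leading singular terms by hand.

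Two caveats on that step. First, as written, the expansion $1/(1+O(\delta))=1+O(\delta)$ is only legitimate when the $O(\delta)$ term (roughly $\delta\,\iota_t''/(2\iota_t')$) has modulus below $1$, which is not guaranteed merely from $|\delta|<\varepsilon<r/2$; you should instead carry out the cancellation non-perturbatively, e.g.\ write $\iota_t(\xi^\varepsilon)-\iota_t(\xi)=\iota_t'(\tilde\xi)\,\delta$ by the mean value theorem on $J$, compute
\[
\Theta_t(\xi^\varepsilon)=\frac{2}{\delta\,\iota_t'(\tilde\xi)}\Bigl(\iota_t'(\xi)^2-\iota_t'(\xi^\varepsilon)\,\iota_t'(\tilde\xi)\Bigr),
\]
and bound the parenthesis by $O(|\delta|)$ using the uniform $\iota_t''$ bound and the lower bound $\iota_t'\geq(2A)^{-1}$ from Proposition~\ref{prop:c1bound}. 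That closes the gap and is valid for every $\varepsilon\in(0,r/2)$. Second, once you perform the preliminary shift $\xi^\varepsilon\mapsto\xi^\varepsilon+c$ that keeps $|\xi^\varepsilon+c-\xi|$ bounded \emph{below} by a positive constant, the two fractions in $\Theta_t(\xi^\varepsilon)$ are individually bounded (using Proposition~\ref{prop:c1bound} and the mean value theorem alone), so the cancellation becomes superfluous; you should either use the shift and drop the cancellation, or drop the shift and rely on the cancellation, but not both. The paper avoids both issues at once because the maximum principle bounds $\Theta_t$ on the whole disk $B(\xi(t),r/2)$, making the value of $\delta$ irrelevant; that is the cleaner route, though your Cauchy-estimate variant is salvageable as indicated.
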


\begin{proof}
$\iota_t(\xi^{\varepsilon}(t))$ is represented as
\begin{align*}
\iota_t(\xi^{\varepsilon}(t)) &= \iota_{t_1}(\xi^{\varepsilon}(t_1))
+ \int_{t_1}^{t} \frac{d}{ds}\iota_s(\xi^{\varepsilon}(s))\,ds \\
&= \iota_{t_1}(\xi^{\varepsilon}(t_1))
+ \int_{t_1}^{t} \left\{ (\partial_s \iota_s)(\xi^{\varepsilon}(s))
	+ \iota_s'(\xi^{\varepsilon}(s))\dot{\xi^{\varepsilon}}(s) \right\}\,ds
\end{align*}
for $t \in [t_1, \zeta)$.
By Proposition~\ref{prop:c1bound}, we have
\[
\sup_{s \in [t_1, \zeta)}
\lvert \iota_s'(\xi^{\varepsilon}(s))\dot{\xi^{\varepsilon}}(s) \rvert
\leq \frac{3A}{2} \max_{s \in [t_1, \zeta]} \lvert \dot{\xi^{\varepsilon}}(s) \rvert
< \infty.
\]
Thus it suffices to prove that
$\sup_{s \in [t_1, \zeta)}\lvert(\partial_s \iota_s)(\xi^{\varepsilon}(s))\rvert<\infty$
in order to establish the lemma.

We begin with the computation of $\partial_t \iota_t(z)$ for $z \in D$.
By the definition of $\iota_t$ and $\mathbf{H}_{\slit(t)}$, we have
\begin{align}
\partial_t\iota_t(z)&=(\partial_tg^0_t)(\iota \circ g_t^{-1}(z))
	+ (g^0_t)'(\iota \circ g_t^{-1}(z))\partial_tg_t^{-1}(z) \notag \\
&= \frac{2\iota_t(\xi(t))^2}{\iota_t(z) - U(t)} - (g^0_t)'(\iota \circ g_t^{-1}(z))
	(g_t^{-1})'(z)(\partial_tg_t)(g_t^{-1}(z)) \notag \\
&= \frac{2\iota_t(\xi(t))^2}{\iota_t(z) - \iota_t(\xi(t))}
	+ 2\pi \iota_t'(z)\Psi_{\slit(t)}(z, \xi(t)) \notag \\
&= \frac{2\iota_t(\xi(t))^2}{\iota_t(z) - \iota_t(\xi(t))}
	- \frac{2\iota_t'(z)}{z - \xi(t)}
	+ 2\pi \iota_t'(z) \mathbf{H}_{\slit(t)}(z, \xi(t)). \label{eq:tderiviota}
\end{align}
We denote the first two terms in the last expression~\eqref{eq:tderiviota}
by $\Theta_t(z)$. Since $\iota_t$ is holomorphic on the disk $B(\xi(t), r)$,
so is $\Theta_t$ on the punctured disk $B(\xi(t), r) \setminus \{\xi(t)\}$.
Actually, $\xi(t)$ is a removable singularity of $\Theta_t$ because
\[
\Theta_t(z)=
\frac{2\iota_t(\xi(t))^{2}}{\iota_t(z) - \iota_t(\xi(t))} - \frac{2\iota_t'(z)}{z - \xi(t)}
\to -3\iota_t''(\xi(t)), \quad z \to \xi(t),
\]
by \cite[Proposition~4.40]{La05}.
Consequently, the identity
$\partial_t\iota_t(z)=\Theta_t(z)+2\pi\iota'_t(z)\mathbf{H}_{\slit(t)}(z,\xi)$ is valid
for all $z \in D \cup \Pi D \cup \partial \uhp$.

We now give a closer look at $\Theta_t(z)$.
Since the function $2h^1_t(z/2)$ with $h^1_t$ defined by \eqref{eq:iota_norm}
belongs to $S$, we have
\begin{equation} \label{eq:denominator}
\lvert \iota_t(z) - \iota_t(\xi(t)) \rvert \geq \frac{r\iota_t'(\xi(t))}{8}
\geq \frac{r}{16A},
\quad z \in \partial B\left(\xi(t), \frac{r}{2}\right),\; t \in [t_1, \zeta),
\end{equation}
by Proposition~\ref{prop:c1bound} and
Koebe's one-quarter theorem~\eqref{eq:Koebe}.
Moreover, we utilize the distortion theorem
(see \cite[Theorem~14.7.9~(a)]{Co95} or \cite[Theorem~1.6~(11)]{Po75}):
\begin{equation} \label{eq:distortion}
\frac{1-\lvert z \rvert}{(1+\lvert z \rvert)^3}
\leq \lvert f'(z) \rvert
\leq \frac{1+\lvert z \rvert}{(1-\lvert z \rvert)^3}, \quad z \in \disk,\; f \in S.
\end{equation}
The inequality \eqref{eq:distortion} with $f=h^1_t$
and Proposition~\ref{prop:c1bound} yield,
for $z \in B(\xi(t), r)$ and $t \in [t_1, \zeta)$,
\begin{equation} \label{eq:numerator}
\lvert \iota_t'(z) \rvert \leq r\iota_t'(\xi(t))
	\frac{1+r^{-1}\lvert z-\xi(t) \rvert}{(1-r^{-1}\lvert z-\xi(t) \rvert)^3}
\leq \frac{3A}{2}\frac{r^4+r^3\lvert z-\xi(t) \rvert}{(r-\lvert z-\xi(t) \rvert)^3}.
\end{equation}
In addition, it follows from \eqref{eq:locbdd} that
\begin{equation} \label{eq:iotabound}
\sup_{t \in [t_1, \zeta)}\lvert \iota_t(\xi(t)) \rvert \leq 2L \quad \text{and} \quad
\sup_{t \in [t_1, \zeta)}\lvert \iota_t(\xi^{\varepsilon}(t)) \rvert \leq 2L.
\end{equation}
By \eqref{eq:denominator}, \eqref{eq:numerator} and \eqref{eq:iotabound},
there exists a constant $M_1>0$ such that
\[
\sup_{t \in [t_1, \zeta)}\max_{z \in \partial B(\xi(t), r/2)}
\lvert \Theta_t(z) \rvert \leq M_1.
\]
The maximal value principle for $\Theta_t$ then implies that
\[
\sup_{t \in [t_1, \zeta)}\sup_{z \in B(\xi(t), r/2)} \lvert \Theta_t(z) \rvert \leq M_1.
\]
Hence it holds that
\begin{equation} \label{eq:Ft}
\sup_{s \in [t_1, \zeta)} \lvert \Theta_s(\xi^{\varepsilon}(s)) \vert \leq M_1.
\end{equation}

It remains to estimate
\[
(\partial_s \iota_s)(\xi^{\varepsilon}(s))-\Theta_s(\xi^{\varepsilon}(s))
=2\pi \iota_s(\xi^{\varepsilon}(s))\mathbf{H}_{\slit(s)}(\xi^{\varepsilon}(s), \xi(s)).
\]
By \eqref{eq:inclBMD} and \eqref{eq:contrary}, we have
\[
\lvert \mathbf{H}_{\slit(t)}(z, \xi(t)) \rvert
=\left\lvert \Psi_{\slit(t)}(z, \xi(t)) + \frac{1}{\pi}\frac{1}{z-\xi(t)} \right\rvert
\leq \frac{5}{4\pi r}
\]
for $z \in \partial B(\xi(t), r)$ and $t \in [0, \zeta)$.
By the maximal value principle for $\mathbf{H}_{\slit(t)}(\cdot, \xi(t))$, we obtain
\begin{equation} \label{eq:Hslit}
\sup_{t \in [0, \zeta)}\sup_{z \in B(\xi(t), r)}
\lvert \mathbf{H}_{\slit(t)}(z, \xi(t)) \rvert \leq \frac{5}{4\pi r}.
\end{equation}
\eqref{eq:iotabound} and \eqref{eq:Hslit} yield
\begin{equation} \label{eq:rem_term}
\sup_{s \in [t_1, \zeta)} \lvert
2\pi \iota_s(\xi^{\varepsilon}(s))\mathbf{H}_{\slit(s)}(\xi^{\varepsilon}(s), \xi(s))
\rvert \leq \frac{5L}{r}.
\end{equation}
It follows from \eqref{eq:tderiviota}, \eqref{eq:Ft} and \eqref{eq:rem_term} that
$\sup_{s \in [t_1, \zeta)}\lvert(\partial_s \iota_s)(\xi^{\varepsilon}(s))\rvert<\infty$,
which is the desired conclusion.
\end{proof}

Recall that
$\sup_{t \in [t_1, \zeta]}\lvert \xi(t) - \xi^{\varepsilon}(t) \rvert <\varepsilon$
is assumed at the beginning of this subsection.
It holds that
\begin{align*}
&\limsup_{t \nearrow \zeta}\iota_t(\xi(t)) - \liminf_{t \nearrow \zeta}\iota_t(\xi(t))\\
&\leq \lvert \limsup_{t \nearrow \zeta}(\iota_t(\xi(t))
	- \iota_t(\xi^{\varepsilon}(t))) \rvert
	- \lvert \liminf_{t \nearrow \zeta}(\iota_t(\xi(t))
	- \iota_t(\xi^{\varepsilon}(t)) \rvert \\
&\leq \frac{3A}{2}\varepsilon + \frac{3A}{2}\varepsilon = 3A\varepsilon
\end{align*}
by Lemma~\ref{lem:approx} and Proposition~\ref{prop:c1bound}.
By letting $\varepsilon \to 0$ in this inequality
and taking \eqref{eq:iotabound} into account,
we observe that $U(t)=\iota_t(\xi(t))$ converges as $t \nearrow \zeta$.
The proof of Proposition~\ref{prop:drive} and thus
of Theorem~\ref{thm:char_d}~\eqref{thm:char_dI} is now complete.

\subsection{Proof of Theorem~\ref{thm:char_s}~\eqref{thm:char_sI}}
\label{subsec:char_s}

This subsection is devoted to the proof
of Theorem~\ref{thm:char_s}~\eqref{thm:char_sI},
which proceeds along lines similar to those in Section~\ref{subsec:outline}.
Suppose that functions $\alpha \geq 0$ and $b$ on $\Slit$ satisfy
the assumption of Theorem~\ref{thm:char_s}.
We denote by $\mathbb{P}_{\mathbf{w}}$ the law of the solution
$W_t=(\xi(t), \slit(t))$ to the SDEs~\eqref{eq:KLsvec} and \eqref{eq:SKLE}
with initial value $W_0=\mathbf{w} \in \R \times \Slit$.
We write $\mathbb{P}_{\mathbf{w}^{\mathrm{int}}}$ simply as $\mathbb{P}$.
As mentioned in Chapter~IV, Section~6 of \cite{IW89},
the solution $W=(W_t, \mathbb{P}_{\mathbf{w}})$ becomes a diffusion process
on the state space
$(\R \times \Slit)_{\infty}:=(\R \times \Slit) \cup \{\mathbf{w}_{\infty}\}$,
where $\mathbf{w}_{\infty}$ is the cemetery,
with respect to the augmented filtration $(\mathcal{F}_t)_{t \geq 0}$
of the Brownian motion $(B_t)_{t \geq 0}$ in \eqref{eq:SKLE}.
We denote the lifetime of $W$ by $\zeta$.
(This is a slight abuse of notation,
but there should be no risk of confusion.)

We define an operator
$\Lambda_r \colon \C^{\R \times \Slit} \to \C^{(\R \times \Slit)_{\infty}}$
for $r>0$ by
\[
\Lambda_r f(\mathbf{w}):=\begin{cases}
f(\mathbf{w})
&\text{if $\mathbf{w} \in \R \times \Slit$ and $R(\mathbf{w}) \geq r$} \\
0 &\text{otherwise.}
\end{cases}
\]
Using this operator, we define a process $W^r_t=(\xi^r(t), \slit^r(t))$ by
\[
W^r_t:=W_0+\int_0^t \Lambda_r \alpha(W_s)\,dB_s
+\int_0^t \Lambda_r b(W_s)\,ds,\quad t \geq 0.
\]
The functions $\Lambda_r \alpha$ and $\Lambda_r b$ are bounded
by Condition~(B).
Hence $(W^r_t)_{t \geq 0}$ is a continuous semimartingale
whose local martingale part is a square-integrable martingale.
Let $\tau_r:=\inf\{t>0; W_t=\mathbf{w}_{\infty}\; \text{or}\; R(W_t)<r\}$.

\begin{prop} \label{prop:modif_W}
For any starting point $\mathbf{w} \in \R \times \Slit$ and
$r \in (0, R(\mathbf{w}))$, it holds that $W_t=W^r_t$ for all $t \in [0, \tau_r)$
$\mathbb{P}_{\mathbf{w}}$-almost surely.
In particular, $W_t$ converges in $\overline{\R \times \Slit}$
as $t \nearrow \tau_r$
$\mathbb{P}_{\mathbf{w}}$-almost everywhere on $\{\tau_r<\infty\}$.
\end{prop}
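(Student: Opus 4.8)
The plan is to show that, on the stochastic interval $[0,\tau_r)$, the process $W_t$ and the modified process $W^r_t$ solve the same SDE with the same (now bounded) coefficients, and then invoke pathwise uniqueness for SDEs with locally Lipschitz bounded coefficients. First I would observe that on the event-time set where $R(W_s)\ge r$ the operator $\Lambda_r$ acts trivially: $\Lambda_r\alpha(W_s)=\alpha(W_s)$ and $\Lambda_r b(W_s)=b(W_s)$ whenever $s<\tau_r$, since by definition of $\tau_r$ we have $R(W_s)\ge r$ for all such $s$ (using continuity of the path and of $R$, which follows from the Lipschitz continuity of $b_l$ and hence joint continuity of $\slit(t)$, together with continuity of $\xi(t)$). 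Consequently the stochastic integrals defining $W_t$ and $W^r_t$ agree up to time $\tau_r$, because the integrands agree on $[0,\tau_r)$ and the stochastic integral only depends on the integrand restricted to that interval (stopped-integral identity). This is the conceptual core, but it must be phrased carefully because $\Lambda_r$ is defined via the \emph{unstopped} process $W_s$, not $W^r_s$; the cleanest route is a localization argument.

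Concretely, I would set $\sigma_n:=\inf\{t>0: R(W^r_t)\le r - 1/n\}\wedge n$ and run a Gronwall/pathwise-uniqueness comparison on each $[0,\sigma_n]$. On that interval $W^r$ takes values in $\{R\ge r-1/n\}$, where $\alpha$ and $b$ are Lipschitz (local Lipschitz continuity on $\Slit$ plus Condition (B) give a global Lipschitz constant on that region after the horizontal-translation reduction), so $W^r$ is the unique strong solution of the SDE \eqref{eq:KLsvec}--\eqref{eq:SKLE} up to $\sigma_n$; since $W$ also solves it up to its lifetime, pathwise uniqueness gives $W_t=W^r_t$ on $[0,\sigma_n\wedge\zeta]$. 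Letting $n\to\infty$ and checking that $\sigma_n\uparrow\tau_r$ (this uses that $R(W^r_t)=R(W_t)$ on the overlap, so the first time $R(W^r)$ drops to $r$ is exactly $\tau_r$) yields $W_t=W^r_t$ for all $t\in[0,\tau_r)$ $\mathbb{P}_{\mathbf{w}}$-a.s.

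For the ``in particular'' clause I would argue as follows. Since $\Lambda_r\alpha$ and $\Lambda_r b$ are bounded (say by a constant $C_r$) by Condition~(B), the process $W^r_t$ is a continuous semimartingale with square-integrable martingale part; its quadratic variation grows at most linearly in $t$, and its finite-variation part is Lipschitz in $t$. On $\{\tau_r<\infty\}$, because $W_t=W^r_t$ there, the path $t\mapsto W_t$ on $[0,\tau_r)$ coincides with the restriction of the global, a.s.\ continuous path $t\mapsto W^r_t$, which therefore extends continuously to the closed interval $[0,\tau_r]$; hence $\lim_{t\nearrow\tau_r}W_t$ exists in $\R^{3N+1}$, and the limit lies in $\overline{\R\times\Slit}=\overline{\Slit}\times\R$ (with the $\R$-coordinate finite) because each coordinate is bounded on $[0,\tau_r]$ by the linear-growth bound on $W^r$. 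One should note that the limit may lie on the boundary $\partial(\R\times\Slit)$ — indeed if $\tau_r<\zeta$ then $R(W_{\tau_r})=r$ and the limit is an interior point, while the content we need later is only that \emph{some} limit exists.

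The main obstacle I expect is the careful matching of the two descriptions near $\tau_r$: the operator $\Lambda_r$ is defined along the original (possibly exploding) path $W$, whereas the well-posedness of $W^r$ comes from its own boundedness, so one has to break the circularity by the localization sequence $\sigma_n$ and verify $\sigma_n\uparrow\tau_r$ without circular reasoning — in particular one must check that up to $\sigma_n$ the process $W$ has not yet exploded (which is automatic since $\sigma_n<\tau_r\le\zeta$ on the relevant event) and that the Lipschitz constant used for pathwise uniqueness is genuinely uniform on $\{R\ge r-1/n\}$, which is where Condition~(B) and the degree-homogeneity reduction to a translation-invariant region are essential. The rest is routine stochastic calculus (Burkholder--Davis--Gundy or just the $L^2$-isometry for the linear growth of $\langle W^r\rangle$, and continuity of sample paths of Itô processes).
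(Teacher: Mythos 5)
Your opening paragraph identifies the correct and much simpler argument — the one the paper actually uses. Since the integrands in the Itô processes defining $W$ and $W^r$ are \emph{the same adapted processes} on the stochastic interval $[0,\tau_r)$ (because $\Lambda_r\alpha(W_s)=\alpha(W_s)$ and $\Lambda_r b(W_s)=b(W_s)$ there), the stochastic integrals agree up to $\tau_r$ by the local property of the Itô integral (this is exactly \cite[Proposition~II.2.2~(iv)]{IW89}, which the paper cites), after localizing by a sequence of stopping times that exhaust $[0,\tau_r)$ to ensure the unstopped integrands are integrable. You then abandon this argument and substitute a pathwise-uniqueness comparison, and that is where the proposal breaks down.

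The pathwise-uniqueness route has a genuine gap. As you yourself observe, the integrands in the definition of $W^r$ are $\Lambda_r\alpha(W_s)$ and $\Lambda_r b(W_s)$, which are deterministic functions of the \emph{original} process $W$, not of $W^r$. Consequently $W^r$ is not a strong solution of any autonomous SDE of the form $dX_t=\alpha(X_t)\,dB_t+b(X_t)\,dt$, and the assertion ``$W^r$ is the unique strong solution of the SDE \eqref{eq:KLsvec}--\eqref{eq:SKLE} up to $\sigma_n$'' is precisely the identity $W=W^r$ you are trying to prove, cast in a circular form. Without an a priori identification of the integrands $\Lambda_r\alpha(W_s)$ with $\alpha(W^r_s)$, pathwise uniqueness has nothing to compare. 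Two further defects in the detour: (i) your localization $\sigma_n:=\inf\{t>0:R(W^r_t)\le r-1/n\}\wedge n$ keeps $W^r$ only in $\{R\ge r-1/n\}$, which is strictly larger than $\{R\ge r\}$ where $\Lambda_r$ acts as the identity, so the truncation is not removed on $[0,\sigma_n]$; and (ii) Condition~(B) yields \emph{boundedness} of $\alpha,b$ on $\{R\ge r\}$, not a Lipschitz constant, and that region (even after the horizontal-translation reduction) is unbounded in the $y_j$ and $x_j,x^r_j$ directions, so local Lipschitz continuity does not upgrade to a uniform Lipschitz bound there without further argument. None of these issues arise in the paper's local-property argument, which needs only boundedness for the integrals to make sense and for continuity of paths of $W^r$ on $[0,\infty)$.

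Your treatment of the ``in particular'' clause is correct: because $\Lambda_r\alpha$ and $\Lambda_r b$ are bounded by Condition~(B), $W^r$ is a continuous semimartingale on the whole half-line, its paths extend continuously through $\tau_r$, and the a.s.\ identity $W=W^r$ on $[0,\tau_r)$ then gives convergence of $W_t$ as $t\nearrow\tau_r$ on $\{\tau_r<\infty\}$. The fix for the main part is simply to discard the pathwise-uniqueness detour, retain your ``conceptual core,'' and supply the localization by stopping times $\tau_{r,n}\nearrow\tau_r$ (chosen so that $W$ stays in a compact subset of $\R\times\Slit$ on $[0,\tau_{r,n}]$) to justify that the Itô integrals defining $W$ exist and agree with those of $W^r$; then pass $n\to\infty$.
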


\begin{proof}
Since $\alpha(W_t)=\Lambda_r \alpha(W_t)$ and $b(W_t)=\Lambda_r b(W_t)$
hold for $t<\tau_r$,
the conclusion follows from \cite[Proposition~II.2.2~(iv)]{IW89}
and the localization by an appropriate sequence of stopping times.
\end{proof}

For $r>0$, we define stopping times $\{\tau_{r,n}\}_{n=0}^{\infty}$,
$\{\tau'_{r,n}\}_{n=0}^{\infty}$ and $\{\sigma_{r,n}\}_{n=0}^{\infty}$ recursively by
$\tau'_{r,0}:=0$ and
\begin{align*}
\tau_{r,n}&:=\inf\{t>\tau'_{r,n}; W_t=\mathbf{w}_{\infty} \; \text{or}\; R(W_t) < r\}, \\
\sigma_{r,n}&:=\inf\{t>\tau'_{r,n}; W_t=\mathbf{w}_{\infty}
\; \text{or}\; \lvert \xi(t)-\xi(\tau'_{r,n}) \rvert \geq r\}, \\
\tau'_{r,n+1}&:=\inf\{t>\tau_{r,n}; W_t=\mathbf{w}_{\infty}
\; \text{or}\; R(W_t) \geq 4r\},
\end{align*}
and events $E_r$ and $E_{r,n}$, $n=1,2,\ldots$, by
\begin{align*}
E_r&:=\{\zeta<r^{-1},\; \liminf_{t \nearrow \zeta}R(W_t)=0
\; \text{and} \; \limsup_{t \nearrow \zeta}R(W_t) \geq 5r\}, \\
E_{r,n}&:=E_r \cap \{\sigma_{r,n} < \tau_{r,n}\}.
\end{align*}
Here we adopt the convention that $\inf\emptyset:=\infty$.
By definition, we have $\tau_{r,0}=\tau_r$ and
$\tau'_{r,n}(\omega)<\tau_{r,n}(\omega)<\tau'_{r, n+1}(\omega)<\zeta(\omega)<1/r$
for all $\omega \in E_r$ and $n \in \N$.

\begin{lem} \label{lem:limsup_rep}
It holds that $E_r = \liminf_n E_{r,n} = \limsup_n E_{r,n}$.
\end{lem}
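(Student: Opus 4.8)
I propose to establish the chain of equalities by proving the two inclusions $\liminf_n E_{r,n}\subseteq\limsup_n E_{r,n}\subseteq E_r$ and $E_r\subseteq\liminf_n E_{r,n}$. The first chain is automatic: $\liminf\subseteq\limsup$ always, and $\limsup_n E_{r,n}\subseteq E_r$ simply because $E_{r,n}\subseteq E_r$ for every $n$. Since $E_{r,n}=E_r\cap\{\sigma_{r,n}<\tau_{r,n}\}$, the whole content of the lemma reduces to the statement that, for every $\omega\in E_r$, one has $\sigma_{r,n}(\omega)<\tau_{r,n}(\omega)$ for all sufficiently large $n$.

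Fix $\omega\in E_r$. The first step is to read off the geometry of the recursively defined stopping times. By the remark preceding the lemma, $\tau'_{r,n}<\tau_{r,n}<\tau'_{r,n+1}$ are all finite and lie in $[0,\zeta)$, and since $t\mapsto R(W_t)$ and $t\mapsto\xi(t)$ are continuous on $[0,\zeta)$, I would note that $R(W_{\tau'_{r,n}})=4r$ and $R(W_{\tau_{r,n}})=r$ for $n\geq 1$. The increasing sequence $\{\tau'_{r,n}\}$ converges to some $L\leq\zeta$, and $\{\tau_{r,n}\}$ converges to the same $L$ by the squeeze $\tau'_{r,n}<\tau_{r,n}<\tau'_{r,n+1}$; if $L<\zeta$ were the case, continuity of $R(W_\cdot)$ at $L$ would force $R(W_L)$ to be both $4r$ and $r$, which is impossible. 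Hence $L=\zeta$, and in particular $\Delta_n:=\tau_{r,n}-\tau'_{r,n}\to 0$ as $n\to\infty$.

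The second step is a contradiction argument. Suppose $\sigma_{r,n}(\omega)\geq\tau_{r,n}(\omega)$ for infinitely many $n$. For such an $n$, on the interval $[\tau'_{r,n},\tau_{r,n})$ one has $R(W_t)\geq r$ by the definition of $\tau_{r,n}$, so Lemma~\ref{lem:condB} gives $\lvert b_l(W_t)\rvert\leq M$ with $M:=\sup\{\lvert b_l(\xi_0,\slit)\rvert:R(\xi_0,\slit)>r/2,\ 1\leq l\leq 3N\}<\infty$, whence $\lvert\slit_l(\tau_{r,n})-\slit_l(\tau'_{r,n})\rvert\leq M\Delta_n$ by integrating \eqref{eq:KLsvec}; moreover $\tau_{r,n}\leq\sigma_{r,n}$ gives $\lvert\xi(\tau_{r,n})-\xi(\tau'_{r,n})\rvert\leq r$. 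Because $R(\xi_0,\slit)=\min_{1\leq j\leq N}\dist(C_j(\slit),\xi_0)$ is $1$-Lipschitz in $\xi_0$ and Lipschitz in $\slit$ with some absolute constant $K$ (perturbing each coordinate of $\slit$ by $\varepsilon$ moves every segment $C_j(\slit)$ by at most $2\varepsilon$ in Hausdorff distance, hence moves each $\dist(C_j(\slit),\xi_0)$ by at most $2\varepsilon$), separating the change of $R$ between the times $\tau'_{r,n}$ and $\tau_{r,n}$ into its $\slit$-part and its $\xi$-part yields
\[
3r=\lvert R(W_{\tau_{r,n}})-R(W_{\tau'_{r,n}})\rvert\leq KM\Delta_n+r,
\]
so that $\Delta_n\geq 2r/(KM)$, a positive constant independent of $n$. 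This contradicts $\Delta_n\to 0$; hence $\sigma_{r,n}(\omega)<\tau_{r,n}(\omega)$ for all large $n$, i.e.\ $\omega\in E_{r,n}$ for all large $n$, i.e.\ $\omega\in\liminf_n E_{r,n}$, as required.

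Nothing deep enters beyond the uniform drift bound from Lemma~\ref{lem:condB} and the elementary Lipschitz geometry of $R$. I expect the only delicate points to be the bookkeeping of the recursive stopping times—checking that on $E_r$ they are finite, strictly interlaced, converge to $\zeta$, and attain the values $4r$ and $r$ by continuity—and the care needed to bound the $\xi$-contribution to the variation of $R$ by exactly $r$ rather than something larger, since it is precisely this that keeps the resulting lower bound on $\Delta_n$ strictly positive.
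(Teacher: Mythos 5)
Your proof is correct and takes essentially the same route as the paper's: both reduce the lemma to showing that on $E_r$ the gaps $\Delta_n=\tau_{r,n}-\tau'_{r,n}$ admit a uniform positive lower bound whenever $\sigma_{r,n}\geq\tau_{r,n}$, and then derive a contradiction from the fact that these disjoint intervals all fit inside $[0,\zeta)$ with $\zeta<r^{-1}$. Your write-up is merely more explicit in separating the $\xi$- and $\slit$-contributions to the change of $R$ and in naming the Lipschitz constant, which the paper absorbs implicitly into the constant $M$ borrowed from the proof of Proposition~\ref{prop:inf_sup}.
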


\begin{proof}
Assume that there are a sample $\omega \in E_r$ and
an increasing sequence $\{n_k(\omega)\}_{k=1}^{\infty}$ of natural numbers
such that $\sigma_{r, n_k(\omega)}(\omega) \geq \tau_{r, n_k(\omega)}(\omega)$
holds for all $k$.
It follows from definition that
$\tau_{r,n_k(\omega)}(\omega)-\tau'_{r,n_k(\omega)}(\omega) \geq r/M$,
where $M$ is the constant in the proof of Proposition~\ref{prop:inf_sup}.
By this inequality, however, we have
\[
r^{-1} > \zeta(\omega)
> \sum_{k=1}^{\infty}(\tau_{r,n_k(\omega)}(\omega)-\tau'_{r,n_k(\omega)}(\omega))
=\infty,
\]
a contradiction.
Therefore, it holds that $E_r \subset \liminf_n E_{r,n}$.
Since it is obvious that $\liminf_n E_{r,n} \subset \limsup_n E_{r,n} \subset E_r$,
the lemma follows.
\end{proof}

\begin{prop} \label{prop:inf_sup_null}
The event
\[
E:=\{\zeta<\infty,\; \liminf_{t \nearrow \zeta}R(W_t)=0
\; \text{and} \; \limsup_{t \nearrow \zeta}R(W_t)>0\}
\]
is a $\mathbb{P}$-null set.
\end{prop}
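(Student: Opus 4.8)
The plan is to reduce everything to showing $\mathbb{P}(E_r)=0$ for each fixed $r>0$. Indeed $E=\bigcup_{r\in\mathbb{Q}\cap(0,\infty)}E_r$, because on $E$ one has $\zeta<\infty$ and $\limsup_{t\nearrow\zeta}R(W_t)>0$, so $\omega\in E_r$ as soon as $r\in\mathbb{Q}\cap(0,\infty)$ satisfies both $5r<\limsup_{t\nearrow\zeta}R(W_t(\omega))$ and $r^{-1}>\zeta(\omega)$; a countable union of null sets is null. So fix $r>0$; I shall argue pathwise on $E_r$ and derive a contradiction off a $\mathbb{P}$-null set.

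On $E_r$, Lemma~\ref{lem:limsup_rep} (in the form $E_r=\limsup_n E_{r,n}$) produces an increasing sequence $n_1<n_2<\cdots$, depending on $\omega$, with $E_{r,n_k}$ occurring for all $k$. For each such $k$ one has $\tau'_{r,n_k}\le\sigma_{r,n_k}<\tau_{r,n_k}<\tau'_{r,n_{k+1}}<\zeta$, so the intervals $[\tau'_{r,n_k},\sigma_{r,n_k}]$ are pairwise disjoint subsets of $[0,\zeta)\subset[0,r^{-1})$; hence $\sum_k\Delta_k\le\zeta<r^{-1}$ for $\Delta_k:=\sigma_{r,n_k}-\tau'_{r,n_k}$, and therefore $\Delta_k\to0$. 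On $[\tau'_{r,n_k},\sigma_{r,n_k}]\subset[\tau'_{r,n_k},\tau_{r,n_k})$ we have $R(W_s)\ge r$, so Condition~(B) furnishes a constant $C_r<\infty$, the same for all $k$, with $\lvert\alpha(W_s)\rvert\vee\lvert b(W_s)\rvert\le C_r$ on these intervals; in particular $\lvert\Lambda_r\alpha\rvert\le C_r$ throughout. Since $\sigma_{r,n_k}<\zeta$ there is no explosion before it, so continuity of $\xi$ on $[0,\zeta)$ gives $\lvert\xi(\sigma_{r,n_k})-\xi(\tau'_{r,n_k})\rvert=r$. Put $M^r_t:=\int_0^t\Lambda_r\alpha(W_s)\,dB_s$, the martingale part of $\xi^r$ (recall the process $W^r=(\xi^r,\slit^r)$ introduced before Proposition~\ref{prop:modif_W}). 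Because $\Lambda_r\alpha(W_s)=\alpha(W_s)$ on $[\tau'_{r,n_k},\sigma_{r,n_k}]$, the SDE~\eqref{eq:SKLE} gives
\[
r=\lvert\xi(\sigma_{r,n_k})-\xi(\tau'_{r,n_k})\rvert
\le\bigl\lvert M^r_{\sigma_{r,n_k}}-M^r_{\tau'_{r,n_k}}\bigr\rvert+C_r\Delta_k ,
\]
hence $\bigl\lvert M^r_{\sigma_{r,n_k}}-M^r_{\tau'_{r,n_k}}\bigr\rvert>r/2$ for all large $k$.

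Finally, $M^r$ is a continuous martingale with $\langle M^r\rangle_t\le C_r^2 t$, so by the Dambis--Dubins--Schwarz theorem $M^r_t=\beta(\langle M^r\rangle_t)$ for a Brownian motion $\beta$ on a possibly enlarged probability space, which is almost surely continuous. Writing $u_k:=\langle M^r\rangle_{\tau'_{r,n_k}}$ and $v_k:=\langle M^r\rangle_{\sigma_{r,n_k}}$, one has $0\le v_k-u_k=\int_{\tau'_{r,n_k}}^{\sigma_{r,n_k}}\Lambda_r\alpha(W_s)^2\,ds\le C_r^2\Delta_k\to0$, while $u_k,v_k\in[0,\langle M^r\rangle_\zeta]\subset[0,C_r^2 r^{-1}]$. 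Passing to a further subsequence so that $u_k\to s^\ast$, we also have $v_k\to s^\ast$, and continuity of $\beta$ forces $\bigl\lvert M^r_{\sigma_{r,n_k}}-M^r_{\tau'_{r,n_k}}\bigr\rvert=\lvert\beta(v_k)-\beta(u_k)\rvert\to0$, contradicting the lower bound $r/2$ just obtained. Thus $E_r$ is contained in a $\mathbb{P}$-null set, and summing over rational $r>0$ yields $\mathbb{P}(E)=0$.

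The step I expect to be the main obstacle is the one in the middle paragraph that turns ``$\xi$ must move by the fixed amount $r$ on each of infinitely many disjoint intervals inside the finite horizon $[0,\zeta)$'' into a contradiction: it rests on using Condition~(B) to bound the SDE coefficients uniformly along those intervals, so that the displacement of $\xi$ on each is, up to the negligible drift contribution $C_r\Delta_k$, realized by the martingale $M^r$ over a quadratic-variation increment at most $C_r^2\Delta_k\to0$, and on Lemma~\ref{lem:limsup_rep} to guarantee infinitely many such intervals. The remaining ingredients --- the decomposition $E=\bigcup_r E_r$, the time-change, and the pathwise continuity argument --- are routine.
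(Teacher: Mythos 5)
Your proof is correct, but it takes a genuinely different route from the paper's. Both start the same way: reduce to $\prob{}{}{E_r}=0$ for each fixed small $r$, and use Lemma~\ref{lem:limsup_rep} to get infinitely many indices $n$ with $\sigma_{r,n}<\tau_{r,n}$ on $E_r$. From there the paper proceeds via the strong Markov property at $\tau'_{r,n}$ together with a second-moment (It\^o isometry plus Cauchy--Schwarz) estimate, showing
$r^2\prob{}{}{E_{r,n}}\leq 2(1+r^{-1})M_r\,\mean{}{}{(2r^{-1})\wedge\sigma_{r,n}\wedge\tau_{r,n}-(2r^{-1})\wedge\tau'_{r,n}}$, so that the telescoping sum over $n$ is at most $4r^{-3}(1+r^{-1})M_r<\infty$, and the first Borel--Cantelli lemma then kills $\limsup_n E_{r,n}\supset E_r$. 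You instead argue pathwise: on $E_r$ the disjoint intervals $[\tau'_{r,n_k},\sigma_{r,n_k}]$ have total length $<r^{-1}$, hence $\Delta_k\to0$; the drift contributes $O(\Delta_k)$; and after the Dambis--Dubins--Schwarz time-change $M^r_t=\beta(\langle M^r\rangle_t)$, the martingale part must realize a displacement $>r/2$ over a quadratic-variation increment $\leq C_r^2\Delta_k\to 0$ inside a fixed compact time window, contradicting uniform continuity of the Brownian path $\beta$. Both arguments ultimately exploit the same ``finite total time, fixed-size increments'' tension, but the paper's version is a distributional estimate that avoids any enlargement of the probability space and yields explicit bounds on $\sum_n\prob{}{}{E_{r,n}}$, whereas yours is a.s.\ pathwise, avoids the strong Markov property and Borel--Cantelli altogether, and instead leans on the DDS representation (with its standard enlargement when $\langle M^r\rangle_\infty<\infty$) and Brownian path regularity. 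Minor remark: the final subsequence extraction $u_k\to s^\ast$ is not needed --- uniform continuity of $\beta$ on $[0,C_r^2r^{-1}]$ already gives $\lvert\beta(v_k)-\beta(u_k)\rvert\to0$ from $v_k-u_k\to0$ --- but this is a stylistic point, not a gap.
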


\begin{proof}
We fix an arbitrary $r \in (0, R(\mathbf{w}^{\mathrm{int}}))$.
It follows from the strong Markov property of $W$ that
\begin{align}
&r^2\prob{}{}{E_{r,n}}
=\mean{}{}{(\xi(\sigma_{r,n})-\xi(\tau'_{r,n}))^2 \mathbf{1}_{E_{r,n}}} \notag \\
&\leq \mean{}{}{(\xi(\sigma_{r,n})-\xi(\tau'_{r,n}))^2
\mathbf{1}_{E \cap \{\sigma_{r,n}<\tau_{r,n}\}
\cap \{\sum_{k \geq n}(\tau_{r,k}-\tau'_{r,k})<r^{-1}\}}
\mathbf{1}_{\{\tau'_{r,n}<r^{-1}\wedge\zeta\}}} \notag \\
&=\mean{}{}{
\mean{}{W_{\tau'_{r,n}}}{(\xi(\sigma_{r,0})-\xi(0))^2
\mathbf{1}_{E \cap \{\sigma_{r,0}<\tau_r\}
\cap \{\sum_{k \geq 0}(\tau_{r,k}-\tau'_{r,k})<r^{-1}\}}}
\mathbf{1}_{\{\tau'_{r,n}<r^{-1}\wedge\zeta\}}} \notag \\
&=\mean{}{}{
\mean{}{W_{\tau'_{r,n}}}{(\xi^r(r^{-1}\wedge\sigma_{r,0}\wedge\tau_r)-\xi(0))^2}
\mathbf{1}_{\{\tau'_{r,n}<r^{-1}\wedge\zeta\}}} \label{eq:smp_W}.
\end{align}
Moreover, we have
\begin{align}
&\mean{}{W_{\tau'_{r,n}}}{(\xi^r(r^{-1}\wedge\sigma_{r,0}\wedge\tau_r)-\xi(0))^2}
\notag \\
&=\mean{}{W_{\tau'_{r,n}}}{
\left(\int_0^{r^{-1}\wedge\sigma_{r,0}\wedge\tau_r} \Lambda_r \alpha(W_s)\,dB_s
+\int_0^{r^{-1}\wedge\sigma_{r,0}\wedge\tau_r} \Lambda_r b(W_s)\,ds\right)^2}
\notag \\
&\leq 2\mean{}{W_{\tau'_{r,n}}}{
\left(\int_0^{r^{-1}\wedge\sigma_{r,0}\wedge\tau_r}
\Lambda_r \alpha(W_s)\,dB_s\right)^2
+\left(\int_0^{r^{-1}\wedge\sigma_{r,0}\wedge\tau_r} \Lambda_r b(W_s)\,ds\right)^2}
\notag \\
&\leq 2\mean{}{W_{\tau'_{r,n}}}{
\int_0^{r^{-1}\wedge\sigma_{r,0}\wedge\tau_r}\Lambda_r \alpha(W_s)^2\,ds
+r^{-1}\int_0^{r^{-1}\wedge\sigma_{r,0}\wedge\tau_r} \Lambda_r b(W_s)^2\,ds}
\notag \\
&\leq 2(1+r^{-1})M_r\mean{}{W_{\tau'_{r,n}}}{r^{-1}\wedge\sigma_{r,0}\wedge\tau_r},
\label{eq:xi_tau}
\end{align}
where $M_r:=\sup_{\mathbf{w}}(\Lambda_r\alpha(\mathbf{w})^2
\vee\Lambda_r b(\mathbf{w})^2)<\infty$.
Substituting \eqref{eq:xi_tau} into \eqref{eq:smp_W} yields
\begin{align*}
r^2\prob{}{}{E_{r,n}}
&\leq 2(1+r^{-1})M_r\mean{}{}{
\mean{}{W_{\tau'_{r,n}}}{r^{-1}\wedge\sigma_{r,0}\wedge\tau_r}
\mathbf{1}_{\{\tau'_{r,n}<r^{-1}\wedge\zeta\}}} \\
&=2(1+r^{-1})M_r\mean{}{}{
((r^{-1}+\tau'_{r,n})\wedge\sigma_{r,n}\wedge\tau_{r,n}-\tau'_{r,n})
\mathbf{1}_{\{\tau'_{r,n}<r^{-1}\wedge\zeta\}}} \\
&\leq 2(1+r^{-1})M_r\mean{}{}{
(2r^{-1})\wedge\sigma_{r,n}\wedge\tau_{r,n}-(2r^{-1})\wedge\tau'_{r,n}}.
\end{align*}
Hence we have
\begin{align*}
\sum_{n=0}^{\infty}\prob{}{}{E_{r,n}}
&\leq 2r^{-2}(1+r^{-1})M_r \sum_{n=0}^{\infty}
\mean{}{}{(2r^{-1})\wedge\sigma_{r,n}\wedge\tau_{r,n}-(2r^{-1})\wedge\tau'_{r,n}} \\
&\leq 4r^{-3}(1+r^{-1})M_r < \infty.
\end{align*}
It follows from the first Borel--Cantelli lemma that
$\prob{}{}{\limsup_n E_{r,n}}=0$,
which implies $\prob{}{}{E_r}=0$ by Lemma~\ref{lem:limsup_rep}.
Since $E=\bigcup_k E_{1/k}$ holds, we obtain $\prob{}{}{E}=0$.
\end{proof}

By Proposition~\ref{prop:inf_sup_null},
we can establish Theorem~\ref{thm:char_s}~\eqref{thm:char_sI}
if we prove that the event
\[
E':=\{\zeta<\infty,\; \liminf_{t \nearrow \zeta}R(W_t)>0\}
=\{\zeta<\infty,\; \inf_{t < \zeta}R(W_t)>0\}
\]
is a $\mathbb{P}$-null set.
To do this,
we denote by $\{F_t\}_{t<\zeta}$ the $\skle_{\alpha, b}$ driven by $\xi(t)$
and take over the notations in Section~\ref{subsec:outline}
such as $g^0_t$, $\iota_t$ and so on.
The relation~\eqref{eq:transformed} is vaild also in this case.
For a moment, we fix a constant $r \in (0, R(\mathbf{w}^{\mathrm{int}}))$.

\begin{prop} \label{prop:rad_c1bound}
There exist a random open interval $J(\omega)$
and constants $t_1(\omega) \in (0, \tau_r(\omega))$ and $A(\omega)>1$
such that $\xi([t_1, \tau_r]) \subset J$ and
\[
\frac{1}{2A} \leq \iota_t'(\xi_0) \leq \frac{3A}{2},
\quad \xi_0 \in J,\; t\in [t_1, \tau_r),
\]
hold $\mathbb{P}$-almost everywhere on $\{\tau_r<\infty\}$.
\end{prop}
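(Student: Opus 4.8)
The plan is to transcribe the deterministic proof of Proposition~\ref{prop:c1bound} in Section~\ref{subsec:claim_hcap} sample path by sample path, with the explosion time $\zeta$ replaced everywhere by the stopping time $\tau_r$ and with the geometric data $L$, $J$, $t_1$, $z_0$, $G$, $A$ now allowed to depend on $\omega$. The one genuinely new input is Proposition~\ref{prop:modif_W}: it supplies, $\mathbb{P}$-almost everywhere on $\{\tau_r<\infty\}$, the convergence of $W_t=(\xi(t),\slit(t))$ in $\overline{\R\times\Slit}$ as $t\nearrow\tau_r$, which will play the combined role of Proposition~\ref{prop:slitconv} and of the standing hypothesis $\xi\in C([0,\infty);\R)$ that was available in the deterministic setting.

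First I would fix $r\in(0,R(\mathbf{w}^{\mathrm{int}}))$ and pass to the full-measure subevent of $\{\tau_r<\infty\}$ on which $W_t\to(\xi(\tau_r),\slit(\tau_r))$ as $t\nearrow\tau_r$. On this subevent $\xi$ extends continuously to $[0,\tau_r]$, the slit vector extends to $\slit(\tau_r)\in\overline{\Slit}$, and since $R(W_t)\ge r$ for every $t<\tau_r$ by the very definition of $\tau_r$, continuity of $R$ on $\overline{\R\times\Slit}$ forces $R(\xi(\tau_r),\slit(\tau_r))\ge r$. Hence, for $\mathbb{P}$-a.e.\ such $\omega$, there is $L=L(\omega)<\infty$ with $\xi([0,\tau_r])\cup\bigcup_{t\in[0,\tau_r]}\mathcal{C}_t\subset B(0,L)$, and the limiting driving point sits at distance at least $r$ from every limiting slit. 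The Loewner-in-$\uhp$ representation of the $\skle_{\alpha,b}$-hulls — the maps $g^0_t$, the conformal maps $\iota_t=g^0_t\circ\iota\circ g_t^{-1}$, and the relation~\eqref{eq:transformed} — is valid pathwise on $[0,\tau_r)$, as recorded just before the statement.

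With $\omega$ fixed, I would then run the argument of Section~\ref{subsec:claim_hcap} verbatim, reading the constant ``$r$'' there as $r/2$ (which is permitted since $R(W_t)\ge r>r/2$ for $t<\tau_r$): the pathwise counterpart of the standing assumption~\eqref{eq:contrary} is ``$R(\xi(t),\slit(t))>r/2$ for $t<\tau_r$'', and the pathwise counterpart of Proposition~\ref{prop:slitconv} is the convergence of $\slit(t)$ just obtained. From these, the $\Sigma$-class estimate \cite[Lemma~3.5]{Mu18} applied to $z\mapsto L^{-1}\iota_t(Lz)$ yields the inclusion~\eqref{eq:locbdd} with $L=L(\omega)$; one then selects a random open interval $J=J(\omega)$ around $\xi(\tau_r)$, a time $t_1=t_1(\omega)<\tau_r$ with $\xi([t_1,\tau_r])\subset J$, nested open sets $J\subset O\subset\overline{O}\subset V$, a bounded smooth domain $G\subset D$ and a point $z_0$ with $\Im z_0\ge 6L$, exactly as in Figure~\ref{fig:BHP} — here using that $\bigcup_{t\in[t_1,\tau_r]}\mathcal{C}_t$ is compact and, by continuity together with $R(W_t)\ge r$, keeps a positive distance from a neighbourhood of $\xi([t_1,\tau_r])$; the boundary Harnack principle (Proposition~\ref{prop:BHP}, a deterministic statement about the fixed Lipschitz domain $G(\omega)$) applied to $h_1(z)=\Im\iota_t(z)$ and $h_2(z)=\Im z$ produces $A=A(\omega)>1$ and the two-sided bound~\eqref{eq:z0BHP}; the absorbing-Brownian-motion estimate based on~\eqref{eq:locbdd} gives $\Im\iota_t(z_0)/\Im z_0\in[1/2,3/2]$ as in~\eqref{eq:z0bound}; and combining these as in~\eqref{eq:c0bound} and letting $z\to\xi_0\in J$ yields $1/(2A)\le\iota_t'(\xi_0)\le 3A/2$ for all $\xi_0\in J$ and $t\in[t_1,\tau_r)$.

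The hard part is really only the bookkeeping between $\tau_r$ and $\zeta$: the deterministic proof leaned on $\xi$ being defined on all of $[0,\infty)$, whereas here $\xi$ a priori exists only up to the lifetime $\zeta$ of the SDE, and the estimate must be produced on a random time interval whose right endpoint is $\tau_r$. This gap is exactly what Proposition~\ref{prop:modif_W} closes, by upgrading the boundedness of $\Lambda_r\alpha$ and $\Lambda_r b$ into almost-sure convergence of $W_t$ at $\tau_r$, hence continuity of $\xi$ and $\slit$ on $[0,\tau_r]$. A secondary, cosmetic, issue is the measurability in $\omega$ of $J$, $t_1$ and $A$; since the statement only claims their pathwise existence, and the downstream use of this proposition (deriving~\eqref{eq:charexpl} $\mathbb{P}$-a.s.) is itself carried out path by path, no extra work is needed, though a routine measurable selection would be available. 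Everything else — the Koebe, distortion and $\Sigma$-estimates, the boundary Harnack principle, and the Brownian representation of $\Im\iota_t(z)-\Im z$ — is deterministic and transfers without change.
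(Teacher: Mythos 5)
Your proposal is correct and takes essentially the same route as the paper: the paper's proof is simply ``For $\mathbb{P}$-a.a.\ $\omega\in\{\zeta=\tau_r<\infty\}$ one has $\inf_{t<\zeta}R(W_t)\ge r$, so the conclusion follows from Propositions~\ref{prop:modif_W} and~\ref{prop:c1bound}; for $\omega\in\{\tau_r<\zeta\}$ the conclusion is trivial,'' and what you have written is precisely the honest unpacking of that citation. The only cosmetic difference is that the paper splits $\{\tau_r<\infty\}$ into $\{\zeta=\tau_r\}$ and $\{\tau_r<\zeta\}$ and treats the second case separately as ``trivial'' (since $W_{\tau_r}\in\R\times\Slit$ there and everything is continuous at $\tau_r$), whereas you handle both cases uniformly via the convergence supplied by Proposition~\ref{prop:modif_W}; both work. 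Your observation that the deterministic lower bound in~\eqref{eq:contrary} is strict while $\tau_r$ only yields $R(W_t)\ge r$, and that this is fixed by running the argument with $r/2$, is a careful point the paper glosses over, and your remark that measurability of $J(\omega)$, $t_1(\omega)$, $A(\omega)$ is immaterial because all downstream use is pathwise is also accurate.
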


\begin{proof}
For $\mathbb{P}$-a.a.\ $\omega \in \{\zeta=\tau_r<\infty\}$,
it holds that $\inf_{t<\zeta(\omega)}R(W_t(\omega)) \geq r$.
Hence the conclusion follows from Propositions~\ref{prop:modif_W}
and \ref{prop:c1bound}.
For $\omega \in \{\tau_r<\zeta\}$, the conclusion is trivial.
\end{proof}

\begin{cor} \label{cor:rad_hcap}
The monotone limit $a^0_{\tau_r-}:=\lim_{t \nearrow \tau_r}a^0_t$ is finite
$\mathbb{P}$-almost everywhere on $\{\tau_r<\infty\}$.
\end{cor}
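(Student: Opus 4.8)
The plan is to repeat the argument behind Corollary~\ref{cor:hcap}, now carried out pathwise on the event $\{\tau_r<\infty\}$ with Proposition~\ref{prop:rad_c1bound} in the role of Proposition~\ref{prop:c1bound}. Recall that the relation \eqref{eq:transformed}, namely $a^0_t=\hcap^{\uhp}(F_t)=2\int_0^t\iota_s'(\xi(s))^2\,ds$, is valid in the SKLE setting as well. Since the integrand is non-negative, $t\mapsto a^0_t$ is non-decreasing, so the limit $a^0_{\tau_r-}$ exists in $[0,\infty]$ along every sample path; the only issue is its finiteness for $\mathbb{P}$-a.e.\ $\omega\in\{\tau_r<\infty\}$.

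First I would work on the $\mathbb{P}$-full subset of $\{\tau_r<\infty\}$ on which Proposition~\ref{prop:rad_c1bound} applies and fix there the random data $t_1=t_1(\omega)\in(0,\tau_r)$, $A=A(\omega)>1$ and $J=J(\omega)$, so that $\xi([t_1,\tau_r])\subset J$ and $\iota_t'(\xi_0)\le 3A/2$ for $\xi_0\in J$ and $t\in[t_1,\tau_r)$. Then I would split
\[
a^0_{\tau_r-}=\hcap^{\uhp}(F_{t_1})+2\int_{t_1}^{\tau_r}\iota_s'(\xi(s))^2\,ds .
\]
The first term is a fixed finite number since $F_{t_1}$ is a bounded hull and the half-plane capacity of a hull is finite. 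For the second term, $\iota_s'(\xi(s))\le 3A/2$ on $[t_1,\tau_r)$ by Proposition~\ref{prop:rad_c1bound}, so it is at most $2(3A/2)^2(\tau_r-t_1)$, which is finite because $\tau_r<\infty$ on the event under consideration. Combining the two bounds gives $a^0_{\tau_r-}<\infty$.

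I do not anticipate a genuine obstacle; the statement is essentially bookkeeping around the randomness. The one point to be explicit about is that the exceptional set where finiteness could fail is contained in the $\mathbb{P}$-null set already discarded in Proposition~\ref{prop:rad_c1bound}, so the conclusion indeed holds $\mathbb{P}$-a.e.\ on $\{\tau_r<\infty\}$; and on $\{\tau_r<\zeta\}$ the claim is trivial, since there $a^0_{\tau_r-}=a^0_{\tau_r}$ is already a finite integral over the bounded interval $[0,\tau_r]$. (If one wishes to avoid quoting the finiteness of $\hcap^{\uhp}$ of a hull, the first term can be handled instead by the continuity, hence boundedness, of $s\mapsto\iota_s'(\xi(s))$ on the compact interval $[0,t_1]$, using the joint continuity of $\iota_s$ in $(s,z)$ exploited in the proof of Proposition~\ref{prop:nondegeneracy}.)
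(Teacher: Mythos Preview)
Your proposal is correct and follows essentially the same approach as the paper: the corollary is stated immediately after Proposition~\ref{prop:rad_c1bound} with no separate proof, mirroring how Corollary~\ref{cor:hcap} is declared to follow directly from \eqref{eq:transformed} and Proposition~\ref{prop:c1bound}. Your splitting at $t_1$ and handling of the initial segment is just the natural bookkeeping needed to make that one-line deduction explicit.
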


\begin{prop} \label{prop:rad_drive}
The process $U(t)=\iota_t(\xi(t))$ converges as $t \nearrow \tau_r$
$\mathbb{P}$-almost everywhere on $\{\tau_r<\infty\}$.
\end{prop}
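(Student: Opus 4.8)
The plan is to reduce Proposition~\ref{prop:rad_drive} to its deterministic counterpart, Proposition~\ref{prop:drive}, by a pathwise argument, in exactly the way Proposition~\ref{prop:rad_c1bound} was deduced from Proposition~\ref{prop:c1bound}. First I would discard the $\mathbb{P}$-null sets coming from Propositions~\ref{prop:modif_W} and \ref{prop:rad_c1bound} and split $\{\tau_r<\infty\}$ into $\{\tau_r<\zeta\}$ and $\{\tau_r=\zeta<\infty\}$. On $\{\tau_r<\zeta\}$ the assertion is immediate: by continuity of $R$ one has $R(W_{\tau_r})=r>0$, so $\iota_t$ stays holomorphic on a fixed neighbourhood of $\xi(t)$ for $t$ up to and slightly beyond $\tau_r$, and hence $t\mapsto\iota_t(\xi(t))$ is continuous at $\tau_r$. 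The substance lies on $\{\tau_r=\zeta<\infty\}$.

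On that event, Proposition~\ref{prop:modif_W} (through the identity $W_t=W^r_t$ for $t<\tau_r$) guarantees that $W_t=(\xi(t),\slit(t))$ converges in $\overline{\R\times\Slit}$ as $t\nearrow\tau_r$; in particular $\xi(t)$ has a finite limit, so $\xi$ extends to an element of $C([0,\tau_r];\R)$, and $\slit(t)$ converges in $\overline{\Slit}$, which is the exact analogue of Proposition~\ref{prop:slitconv}. Moreover $\inf_{t<\tau_r}R(W_t)\ge r$ by the definition of $\tau_r$, so (after shrinking $r$ to $r/2$ if a strict inequality is wanted) the assumption~\eqref{eq:contrary} is in force. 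Thus, along $\mathbb{P}$-almost every sample path in $\{\tau_r=\zeta<\infty\}$, the pair $(\xi,\slit)$ satisfies precisely the deterministic hypotheses of Sections~\ref{subsec:outline}--\ref{subsec:claim_drive} with $\zeta$ replaced by $\tau_r$: Proposition~\ref{prop:rad_c1bound} supplies the interval $J$, the time $t_1$ and the constant $A$ just as Proposition~\ref{prop:c1bound} did, the pathwise formula~\eqref{eq:tderiviota} for $\partial_t\iota_t$ still holds, and the estimates \eqref{eq:denominator}--\eqref{eq:iotabound}, \eqref{eq:Ft}, \eqref{eq:Hslit}, \eqref{eq:rem_term} remain valid, now with quantities $L,A,M_1,\dots$ that are random but finite on the event under consideration.

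With this setup I would simply rerun the argument of Section~\ref{subsec:claim_drive} on each such sample path: approximate $\xi|_{[t_1,\tau_r]}$ by $C^1$ functions $\xi^{\varepsilon}$ with $\sup_{[t_1,\tau_r]}|\xi-\xi^{\varepsilon}|<\varepsilon$ and range in $J$, invoke Lemma~\ref{lem:approx} verbatim to get that $\iota_t(\xi^{\varepsilon}(t))$ converges as $t\nearrow\tau_r$, and then let $\varepsilon\to0$ using the uniform bound $\iota_t'(\xi_0)\le 3A/2$ on $J$ to see that the oscillation of $U(t)=\iota_t(\xi(t))$ near $\tau_r$ is at most $3A\varepsilon$, hence zero. (One could also bypass the $C^1$-approximation: since $\xi(t)$ already converges and $\partial_t\iota_t$, $\iota_t'$ are uniformly bounded near $\xi(\tau_r-)$, the elementary bound $|\iota_t(\xi(t))-\iota_s(\xi(s))|\le C(|t-s|+|\xi(t)-\xi(s)|)$ shows directly that $t\mapsto\iota_t(\xi(t))$ is Cauchy; I would nonetheless keep the approximation route to stay parallel with the deterministic proof.)

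I expect the main obstacle to be organisational rather than analytic. No new estimate is needed, and the only genuinely probabilistic ingredient is the step ``defined up to $\zeta$ $\Longrightarrow$ continuously extendable up to $\tau_r$'': a priori $\xi$ and $\slit$ might oscillate as $t\nearrow\tau_r$, which would destroy every compactness argument, and it is exactly Proposition~\ref{prop:modif_W} — itself resting on the boundedness of $\Lambda_r\alpha$ and $\Lambda_r b$, i.e.\ on Condition~(B) — that excludes this. Beyond that, the care required is in the bookkeeping: one must check that all the ``constants'' of Section~\ref{subsec:claim_drive} are measurable and finite off a $\mathbb{P}$-null set, and that the exceptional sets from Propositions~\ref{prop:modif_W} and \ref{prop:rad_c1bound} may be unioned and discarded; everything else is the deterministic argument applied pathwise.
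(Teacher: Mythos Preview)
Your proposal is correct --- indeed, the paper explicitly concedes in the opening line of its own proof that ``this proposition follows from Proposition~\ref{prop:drive}'', which is precisely your route. However, the paper then chooses a genuinely different argument: instead of rerunning Section~\ref{subsec:claim_drive} pathwise, it applies It\^o's formula to $U(t)=\iota_t(\xi(t))$ (quoting the semimartingale decomposition~\eqref{eq:Ito_drive} from \cite{CFS17} or \cite{Mu18}), bounds the integrands $\iota_t'(\xi(t))$ and $\iota_t''(\xi(t))$ uniformly on $[0,\tau_r)$ via Proposition~\ref{prop:rad_c1bound} and Bieberbach's theorem applied to $h^1_t$, and then builds a globally defined continuous semimartingale $U^r(t)$ that agrees with $U(t)$ up to $\tau_r$, forcing convergence. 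Your approach has the virtue of being purely deterministic once Proposition~\ref{prop:modif_W} supplies the limit of $\xi$ --- no stochastic calculus is needed, and the parallel with Theorem~\ref{thm:char_d} is transparent. The paper's approach buys brevity: it bypasses the $C^1$-approximation $\xi^\varepsilon$, the maximum-principle estimates \eqref{eq:Ft}--\eqref{eq:rem_term}, and the delicate removable-singularity computation for $\Theta_t$, replacing them with a single second-derivative bound and the observation that a semimartingale with locally bounded integrands extends continuously. It also exploits Condition~(B) for $b_{\bmd}$ (Lemma~\ref{lem:condB}), which your pathwise argument does not need directly.
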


\begin{proof}
While this proposition follows from Proposition~\ref{prop:drive},
we can give a shorter proof in this case by using It\^o's formula.
By \cite[Theorem~2.8]{CFS17} or \cite[Eq.~(4.7)]{Mu18}, it holds that
\begin{align}
U(t)&=\xi^{\mathrm{int}}+\int_0^t\iota_s'(\xi(s))\alpha(W_s)\,dB_s
+\int_0^t\iota_s'(\xi(s))\left(b_{\bmd}(W_s)-b(W_s)\right)\,dt \notag \\
&\phantom{=}{}+\frac{1}{2}\int_0^t\iota_s''(\xi(s))\left(\alpha(W_s)^2-6\right)\,dt
\label{eq:Ito_drive}
\end{align}
for $t<\zeta$ almost surely under $\mathbb{P}$.
Here, $b_{\bmd}$ is the BMD domain constant appearing in Lemma~\ref{lem:condB}.

We set
\[
\mu_t:=\iota_t'(\xi(t))\mathbf{1}_{\{t<\tau_r\}}\quad \text{and}\quad
\nu_t:=\iota_t''(\xi(t))\mathbf{1}_{\{t<\tau_r\}}.
\]
By Proposition~\ref{prop:rad_c1bound},
we can regard $\mu_t$ as a progressively measurable process
that is bounded on every compact subinterval of $[0, \infty)$ a.s.
We apply Bieberbach's theorem to the function $h^1_t$
defined in \eqref{eq:iota_norm} and use Proposition~\ref{prop:rad_c1bound} again
to obtain
\[
\frac{r\lvert \iota_t''(\xi(t)) \rvert}{2\iota_t'(\xi(t))} \leq 2, \quad
\text{i.e.,} \quad \lvert \iota_t''(\xi(t)) \rvert \leq \frac{4}{r}\iota_t'(\xi(t))
\leq \frac{3A}{2r}
\]
for $t<\tau_r$ a.e.\ on $\{\tau_r<\infty\}$.
Hence $\nu_t$ is also progressively measurable and
bounded on every compact subinterval of $[0, \infty)$ a.s.
In this way, we observe that
\[
\int_0^t \left\{ (\mu_t \Lambda_r \alpha(W_s))^2
+\lvert \mu_t \Lambda_r(b_{\bmd}-b)(W_s) \rvert
+\lvert \nu_t (\Lambda_r \alpha(W_s)^2-6) \rvert \right\}\,ds<\infty
\]
for all $t \in [0, \infty)$ a.s., 
which implies that a process
\begin{align*}
U^r(t)&:=\xi^{\mathrm{int}}+\int_0^t\mu_s\Lambda_r\alpha(W_s)\,dB_s
+\int_0^t\mu_s\Lambda_r(b_{\bmd}-b)(W_s)\,dt \\
&\phantom{:=}{}+\frac{1}{2}\int_0^t\nu_s\left(\Lambda_r\alpha(W_s)^2-6\right)\,dt
\end{align*}
is a continuous semimartingale on $[0, \infty)$.
We can check in a way similar to the proof of Proposition~\ref{prop:modif_W}
that $U(t)=U^r(t)$ holds for all $t<\tau_r$ a.s.
In particular, $U(t)$ converges as $t \nearrow \tau_r$ on $\{\tau_r<\infty\}$.
\end{proof}

Let $E'_r:=\{\zeta<\infty,\; \inf_{t<\zeta}R(W_t) \geq r\}$.
It holds that $\tau_r=\zeta<\infty$ on $E'_r$.
From Propositions~\ref{prop:rad_c1bound}, \ref{prop:rad_drive} and
Corollary~\ref{cor:rad_hcap}, it follows that
Propositions~\ref{prop:nonintersect} and \ref{prop:nondegeneracy} hold
for $\mathbb{P}$-a.a.\ $\omega \in E'_r$.
Hence we have $W^r_{\zeta(\omega)}(\omega) \in \R \times \Slit$
for $\mathbb{P}$-a.a.\ $\omega \in E'_r$,
which yields $\prob{}{}{E'_r}=0$ by the definition of $\zeta$.
Since $E'=\bigcup_k E'_{1/k}$ holds, we have $\prob{}{}{E'}=0$,
which finishes the proof of Theorem~\ref{thm:char_s}~\eqref{thm:char_sI}.

\section*{Acknowledgements}

I wish to express my gratitude to Professor Roland M.\ Friedrich
for pointing out a lack of references in Section~\ref{sec:intro}
and to the anonymous referee for his or her suggestions
very helpful in making the proof transparent.

\end{document}